\newtheorem{thm}{Theorem}[section]
\newtheorem{col}[thm]{Corollary}
\newtheorem{lem}[thm]{Lemma}
\newtheorem{prop}[thm]{Proposition}
\theoremstyle{definition}
\newtheorem{defn}[thm]{Definition}
\theoremstyle{remark}
\newtheorem{rem}[thm]{Remark}
\theoremstyle{example}
\numberwithin{equation}{section}
\title{A Transcendental Invariant of Pseudo-Anosov Maps}
\author[Hongbin~Sun]{Hongbin Sun}
\address{%
    Department of Mathematics\\
    Princeton University\\
    Princeton, NJ 08544, USA}
\email{%
    hongbins@math.princeton.edu}
\subjclass[2010]{57M27, 37E30}
\date{}
\begin{document}

\begin{abstract}
   For each pseudo-Anosov map $\phi$ on surface $S$,
we will associate it with a $\mathbb{Q}$-submodule of $\mathbb{R}$, denoted by $A(S,\phi)$.
$A(S,\phi)$ is defined by an interaction between the Thurston norm and dilatation of pseudo-Anosov maps. We will develop a few nice properties of $A(S,\phi)$ and give a few examples to show that $A(S,\phi)$ is a nontrivial invariant. These nontrivial examples give an answer to a question asked by McMullen: the minimal point of the restriction of the dilatation function on fibered face need not be a rational point.
\end{abstract}

\maketitle

% ---------------------------------------------------------------

\section{Introduction}
\subsection{Background}
Given a pseudo-Anosov map $\phi$ on an oriented surface $S$ (possibly with boundary) with negative Euler
characteristic, a natural object associated with $(S,\phi)$ is the mapping torus $M(S,\phi)=S\times I/(x,0)\sim (\phi(x),1)$.
Thurston showed that $M(S,\phi)$ admits a
complete hyperbolic metric with finite volume (\cite{Ot}). Recently, Wise showed that every cusped hyperbolic $3$-manifold has
a finite cover which is a surface bundle over circle in \cite{Wi}, while Agol showed this result for closed hyperbolic $3$-manifold
(Virtually Fibered Conjecture) in \cite{Ag}. So hyperbolic surface bundles are virtually all finite volume hyperbolic
$3$-manifolds.

In this paper, we will always suppose $b_1(M(S,\phi))>1$. Then $N=M(S,\phi)$ has infinitely many different surface bundle over circle structures. These structures are organized by the Thurston norm on $H^1(N;\mathbb{R})$ (\cite{Th}). For any $\alpha \in H^1(N;\mathbb{Z})$
($\cong H_2(N,\partial N; \mathbb{Z})$ by duality), the Thurston norm of $\alpha$ is defined by:
$$\| \alpha \|=inf\{|\chi(T_0)|\ |\ (T,\partial T)\subset(N, \partial N)\ \text{is \ dual \ to}\ \alpha\},$$ here
$T_0\subset T$ excludes $S^2$ and $D^2$ components of $T$. Then the norm can be extended to $H^1(N;\mathbb{R})$
homogeneously and continuously, while the Thurston norm unit ball is a polyhedron with faces dual with elements in
$H_1(N;\mathbb{Z})/Tor$. For an open
face $F$ of the Thurston norm unit ball, let the open cone over $F$ denoted by $C$. Thurston showed that $C$
contains an integer cohomology class that corresponds to a surface bundle over circle structure if and only if all the cohomology classes
in $C$ correspond to such structures. In this case, the face $F$ is called a fibered face, and $C$ is called a fibered cone.

Another important object associated with pseudo-Anosov map $\phi$ is its dilatation $\lambda(\phi)\in \mathbb{R}_{>1}$.
For pseudo-Anosov map $\phi$,
there is a pair of transverse singular measured foliations (equivalently geodesic measured laminations) $(\mathcal{F}^+, \mu^+),
(\mathcal{F}^-,\mu^-)$, such that $\mathcal{F}^+$ and $\mathcal{F}^-$ are preserved by $\phi$ and
$\phi^*(\mu^+)=\lambda(\phi)\mu^+$, $\phi^*(\mu^-)=\frac{1}{\lambda(\phi)}\mu^-$.
So for any integer cohomology class $\alpha\in C\cap H^1(N; \mathbb{Z})$, it is associated with a number $\lambda(\alpha)$, which is the
dilatation of the corresponding monodromy map. For rational point $\alpha/n \in C\cap H^1(N; \mathbb{Q})$, we can define $\lambda(\alpha/n)=\lambda(\alpha)^n$,
so $\lambda(\cdot)$ is a function defined on $C\cap H^1(N; \mathbb{Q})$ now.

In \cite{Fr}, Fried showed that $\lambda(\cdot)$ can be extended to a continuous function on fibered cone $C$. We will use notation
$\lambda_C(\cdot)$ when we want to emphasize this function is defined on $C$. Fried also showed that $\lambda(\alpha)$ goes to infinity when $\alpha$
goes to the boundary of $C$, and $\frac{1}{\log{\lambda(\cdot)}}$ is a concave function on $C$ (see \cite{LO} for an alternative
proof). Moreover, Matsumoto showed that $\frac{1}{\log{\lambda(\cdot)}}$ is strictly concave along rays not going through the original
point (\cite{Ma}). It implies that the restriction of $\lambda(\cdot)$ on fibered face $F$ has a unique minimal point, which is denoted by $m_F$.

In \cite{McM}, McMullen defined the Teichmuller polynomial $\Theta_F\in \mathbb{Z}[H_1(N;\mathbb{Z})/Tor]$, which is in the form of
$\Theta_F=\sum_g a_g\cdot g$ with $a_g\in \mathbb{Z},\ g\in H_1(N;\mathbb{Z})/Tor$. Using Teichmuller polynomial $\Theta_F$, one can compute $\lambda(\cdot)$ effectively: $\lambda(\alpha)$ is the largest root of polynomial $\sum_g a_g\cdot X^{\langle \alpha,g\rangle}=0$ for any $\alpha \in C$. Using properties of Teichmuller polynomial, McMullen reproved Fried's and Matsumoto's theorem in \cite{McM}. We will briefly review McMullen's work in Section \ref{McMullen}.

Knowing the existence and uniqueness of the minimal point $m_F$, McMullen asked the following question in \cite{McM}:\\
{\bf Question.} Is the minimum always achieved at a rational cohomology class?

In this paper, we will construct a few examples to show that the minimal point need not to be a rational point, and such irrational minimal point is a general phenomenon.

\subsection{Main Results}
In this paper, we assume all manifolds are oriented, all homeomorphisms preserve the fixed orientation. For a surface $S$ in $3$-manifold $N$, we may abuse notation and use $[S]$ to denote the element in $H^1(N;\mathbb{Z})$ dual with $[S, \partial S]\in H_2(N,\partial N;\mathbb{Z})$. We may also use $c$ to denote the homology class of oriented curve $c$ in $N$.

For a pseudo-Anosov map $\phi$ on surface $S$, we will define a finitely generated $\mathbb{Q}$-submodule $A(S,\phi)$ of $\mathbb{R}$.
It is generated by the coordinate of $m_F$. More specifically, if $\{\alpha_i\}$ is a basis of $H^1(N;\mathbb{Z})$, and
$m_F=\sum r_i \alpha_i$, then we define a $\mathbb{Q}$-submodule of $\mathbb{R}$: $$A(S,\phi)=\{\sum q_ir_i\ |\ q_i\in \mathbb{Q}\}.$$
Since $m_F$ lies on fibered face $F$ which is dual to an integer homology class, $\mathbb{Q}$ is always a submodule of
$A(S,\phi)$. 

Essentially, $A(S,\phi)$ is an invariant for the fibered cone $C$, sometimes we will also use $A_C$ to
denote $A(S,\phi)$. However, in most part of this paper, we will only think $A(S,\phi)$ as an invariant of pseudo-Anosov map.

In Section \ref{property}, we will show a few nice properties of $A(S,\phi)$. At first, in Section \ref{coversec}, we show that $A(S,\phi)$ behaves well under taking finite cover.
\begin{prop}\label{nice}
For two pseudo-Anosov maps $(S_1,\phi_1)$ and $(S_2,\phi_2)$, if there is another manifold $M$ and finite covers $p_i:M\rightarrow M(S_i,\phi_i),\ i=1,2$, such that $p_1^*([S_1])$ and $p_2^*([S_2])$ lie in the same fibered cone of $H^1(M;\mathbb{R})$, then $A(S_1,\phi_1)=A(S_2,\phi_2)$.
\end{prop}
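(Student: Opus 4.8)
The plan is to reduce the statement to a single finite cover and then exploit the symmetry coming from the deck group. It suffices to prove the following: \emph{if $p\colon M\to N$ is a finite cover of mapping tori and $C_N\subset H^1(N;\mathbb R)$, $C_M\subset H^1(M;\mathbb R)$ are fibered cones with $p^*(C_N)\subseteq C_M$, then $A_{C_N}=A_{C_M}$.} Granting this, take $N=M(S_i,\phi_i)$ and let $C_i$ be the fibered cone containing $[S_i]$. Since $[S_i]$ is fibered, a Stallings argument (a finite-index subgroup of $\ker\beta$ is finitely generated iff $\ker\beta$ is) shows that $p_i^*$ sends every rational class of $C_i$ to a fibered class of $M$; thus $p_i^*(C_i)$ is a connected set of fibered classes containing $p_i^*([S_i])\in C$, whence $p_i^*(C_i)\subseteq C$. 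The displayed statement then gives $A(S_i,\phi_i)=A_{C_i}=A_C$ for $i=1,2$, and therefore $A(S_1,\phi_1)=A_C=A(S_2,\phi_2)$. One may assume $p$ is regular, since a general finite cover is dominated by its Galois closure $\widehat M\to N$, which is also regular over $M$, and applying the regular case to $\widehat M\to N$ and to $\widehat M\to M$ (with fibered cones matched up by pullback of the fiber class) yields $A_{C_N}=A_{\widehat C}=A_{C_M}$.

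Two facts feed the reduction. First, dilatations are preserved: for $\alpha\in C_N\cap H^1(N;\mathbb Q)$ the invariant foliations of the monodromy dual to $\alpha$ and their transverse measures pull back to those of the monodromy of $M$ dual to $p^*\alpha$ with the same dilatation, so by the continuity of $\lambda_{C_N}$ and $\lambda_{C_M}$ (Fried) we get $\lambda_{C_M}(p^*\alpha)=\lambda_{C_N}(\alpha)$ for all $\alpha\in C_N$. Second, the Thurston norm is multiplied by $d=\deg p$ on fibered classes: the fiber dual to $\alpha$ is norm-minimizing (Thurston), and its preimage is norm-minimizing in its class with $d$ times the Euler characteristic, so $\|p^*\alpha\|_M=d\,\|\alpha\|_N$ on $C_N$. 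Since $1/\log\lambda_{C_M}$ is homogeneous of degree $1$, these two facts show that its maximum over the slice $p^*(C_N)\cap\{\|\cdot\|_M=1\}$ of the fibered face $F_M$ is attained exactly along the ray through $p^*(m_{F_N})$, where $m_{F_N}$ is the minimal point of $\lambda_{C_N}$ on the fibered face $F_N$.

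The heart of the argument is to upgrade this to the \emph{global} maximum over $F_M$. Assume $p$ regular with deck group $G$. For $g\in G$, $pg=p$ gives $g^*p^*=p^*$, so $g^*$ fixes $p^*\big(H^1(N;\mathbb R)\big)$ pointwise; as $p^*(C_N)\subseteq C_M$ is a nonempty subset of the open cone $C_M$, $g^*$ fixes a point of $C_M$ and hence preserves $C_M$. Being a homeomorphism it also preserves the Thurston norm, and conjugating monodromies shows it preserves $\lambda_{C_M}$. Thus $g^*$ permutes the minimizers of $\lambda_{C_M}$ on $F_M$, and by Matsumoto's uniqueness $g^*(m_{F_M})=m_{F_M}$; that is, $m_{F_M}\in H^1(M;\mathbb R)^G$. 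A transfer argument identifies $H^1(M;\mathbb R)^G$ with $p^*\big(H^1(N;\mathbb R)\big)$, and one checks $p^*\big(H^1(N;\mathbb R)\big)\cap C_M=p^*(C_N)$: the open convex cone $(p^*)^{-1}(C_M)$ has all its rational points fibered (Stallings again) and contains $C_N$, but it cannot cross the rational bounding hyperplane of another fibered cone, whose rational points are non-fibered; hence $(p^*)^{-1}(C_M)=C_N$. Therefore $m_{F_M}=p^*(\gamma)$ with $\gamma\in C_N$, and by the previous paragraph together with uniqueness $\gamma$ is a positive multiple of $m_{F_N}$, in fact $\gamma=\tfrac1d\,m_{F_N}$ by the norm computation. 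Finally, since $p^*$ and the transfer $\tau$ both carry integral classes to integral classes with $\tau\circ p^*=d\cdot\mathrm{id}$, the identities $m_{F_M}=\tfrac1d\,p^*(m_{F_N})$ and $m_{F_N}=\tau(m_{F_M})$ show that the coordinate vectors of $m_{F_M}$ and $m_{F_N}$ in integral bases are $\mathbb Q$-linear images of each other, so they span the same $\mathbb Q$-submodule of $\mathbb R$, i.e.\ $A_{C_M}=A_{C_N}$.

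I expect the main obstacle to be exactly the third paragraph. The hypothesis only tells us that a single fibered class pulls back into $C_M$, and a priori the minimal point $m_{F_M}$ — known only to be $G$-invariant — could sit in the part of $C_M$ not coming from $N$; ruling this out requires combining the $G$-symmetry of $\lambda_{C_M}$ with the fact that boundary classes of fibered cones are non-fibered, and some care is needed to ensure that $p^*$ does not collapse distinct fibered cones of $N$ into a single cone of $M$.
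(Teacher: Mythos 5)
Your proof is correct and follows essentially the same route as the paper's (Lemma 3.2 and Proposition 3.1 there): reduce to a single cover, pass to the Galois closure to assume regularity, use deck-group invariance of $\lambda$ together with uniqueness of the minimizer to force $m_{F_M}$ into $p^*\big(H^1(N;\mathbb{R})\big)$, and then transport the minimizer via $\lambda_{C_M}\circ p^*=\lambda_{C_N}$ and $\|p^*(\cdot)\|_M=d\,\|\cdot\|_N$. The only difference is cosmetic: the paper identifies $(F_M)^G$ with $\tfrac1d p^*(F_N)$ using Gabai's norm formula and the vertex of $H_1$ dual to the fibered face, whereas you argue $(p^*)^{-1}(C_M)=C_N$ via Stallings and the non-fiberedness of rational boundary classes; both are valid.
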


McMullen's question is equivalent to: whether $A(S,\phi)=\mathbb{Q}$ hold for all pseudo-Anosov maps. In Section \ref{specialsec}, we will show that all pseudo-Anosov maps on relatively simple surfaces: $\Sigma_{0,4},\Sigma_{1,2},\Sigma_{2,0}$, always have $A(S,\phi)=\mathbb{Q}$ (here $\Sigma_{g,n}$ denote orientable surface of genus $g$ with $n$ boundary components). On the other hand, in Section \ref{numerical}, \ref{numerical2} and \ref{numerical3}, we will give examples to show that for slightly more complicated surfaces: $\Sigma_{2,1},\Sigma_{3,0},\Sigma_{1,3}$ and $\Sigma_{0,5}$, all these surfaces admit pseudo-Anosov maps with $A(S,\phi)\ne\mathbb{Q}$.

Since different integer classes in the same fibered cone $C$ share the same invariant $A_C$, we can get a
lot of different pseudo-Anosov maps with $A(S,\phi)\ne\mathbb{Q}$ if we are given one. Using the examples we have in hand, we can deduce
the following theorem:
\begin{thm}\label{general}
For surfaces $\Sigma_{g,0}$ with $g\geq 3$ and punctured surfaces $\Sigma_{g,1}$ with $g\geq 2$, all these surfaces admit pseudo-Anosov maps with
$A(S,\phi)\ne\mathbb{Q}$.
\end{thm}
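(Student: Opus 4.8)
The plan is to bootstrap from the concrete examples already constructed (on $\Sigma_{2,1}$, $\Sigma_{3,0}$, $\Sigma_{1,3}$, $\Sigma_{0,5}$) together with Proposition~\ref{nice} and the fact that $A_C$ is a fibered-cone invariant. The key mechanism is: once a single pseudo-Anosov map $\phi_0$ on some surface $S_0$ has $A(S_0,\phi_0)\neq\mathbb{Q}$, every integer class in the same fibered cone $C$ of $M(S_0,\phi_0)$ gives a pseudo-Anosov monodromy sharing the invariant $A_C\neq\mathbb{Q}$. Since $C$ is an open cone of full dimension $b_1\geq 2$ in $H^1(M;\mathbb{R})$, it contains integer classes whose dual fibers have arbitrarily large $|\chi|$, and moreover one can arrange the fiber to have a prescribed number of boundary components by controlling intersection numbers of the dual surface with the cusps of $M$. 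This already produces pseudo-Anosov maps on infinitely many surfaces of each "type", but to hit \emph{every} $\Sigma_{g,0}$ with $g\geq 3$ and every $\Sigma_{g,1}$ with $g\geq 2$ one needs finer control.

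**The first step** is to handle the closed case $\Sigma_{g,0}$, $g\geq 3$. Starting from the $\Sigma_{3,0}$ example $M = M(\Sigma_{3,0},\phi)$ — a closed $3$-manifold — I would look inside the fibered cone $C\subset H^1(M;\mathbb{R})$ and show that the fiber genera of primitive integer classes in $C$ realize all sufficiently large integers, hence (after possibly using a second small example, or a covering move) realize every $g\geq 3$. Concretely, if $[S]\in C$ is the original fiber with $\chi(S) = 2-2\cdot 3 = -4$, then for $\alpha\in C\cap H^1(M;\mathbb{Z})$ primitive the dual fiber is connected and closed with $\chi = -\|\alpha\|_T$, so $\|\alpha\|_T$ must be even and every even value $\geq 4$ occurring as a Thurston norm on $C$ gives a $\Sigma_{g,0}$. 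The norm restricted to the integer lattice in the open cone $C$ takes all sufficiently large values in the coset it lives in (it is a linear functional on $C$, so its image on $\mathbb{Z}^{b_1}\cap C$ is cofinite in an arithmetic progression), which yields all large $g$; the finitely many small genera $g=3,4,\dots$ below that bound are then covered either directly by the example or by passing to the $\Sigma_{2,1}$ example and capping off a boundary component via a cover, as in Proposition~\ref{nice}.

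**The second step** is the once-punctured case $\Sigma_{g,1}$, $g\geq 2$, handled identically but starting from $M = M(\Sigma_{2,1},\phi)$, which has one cusp. Here the dual surface to $\alpha\in C$ has some number of boundary components equal to the intersection number of $\alpha$ with the meridian-longitude data of the cusp; one shows that the sub-cone of classes dual to \emph{once-punctured} fibers is still open and nonempty (it contains $[\Sigma_{2,1}]$), so the same cofiniteness argument on the Thurston norm produces $\Sigma_{g,1}$ for all large $g$, and the small cases are filled in by hand from the construction. The remaining surfaces named in Theorem~\ref{general} beyond the base examples — e.g. matching up $\Sigma_{1,3}$ and $\Sigma_{0,5}$ data if needed — are reconciled through Proposition~\ref{nice} by exhibiting a common cover with two fibrations.

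**The main obstacle** I anticipate is the control of the \emph{topological type} of the fiber — not just its Euler characteristic but its genus-versus-boundary split — as $\alpha$ ranges over the integer points of the fibered cone. The Euler characteristic is linear and easy to control, and whether the fiber is connected is governed by primitivity of $\alpha$, but the number of boundary components depends on how the dual surface meets the cusps and can jump; ensuring one stays in the open sub-cone where this number is the desired constant (namely $0$ for $\Sigma_{g,0}$ and $1$ for $\Sigma_{g,1}$) requires a careful local analysis near the faces of $C$ or an explicit description of the cusp cross-sections in the worked examples. A secondary technical point is verifying that the small genera below the "cofinite" threshold are genuinely realized, which may force one to actually exhibit or cite additional explicit pseudo-Anosov maps rather than argue purely abstractly.
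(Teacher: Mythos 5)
Your overall strategy --- start from the explicit irrational examples, use the fibered-cone invariance of $A_C$, and sweep through primitive integer classes in the cone to realize the desired topological types --- is exactly how the paper assembles Theorem \ref{general}: it is the conjunction of Theorems \ref{onecusp}, \ref{even} and \ref{odd}. Your treatment of the once-punctured case matches the paper's. In the drilled manifold $N_c=M(S_c,\phi_c)$ with $c=u+t$ one takes the primitive classes $\beta_n=n\alpha_1-(n-1)\alpha_2$, which have intersection number $1$ with the drilled orbit (hence connected fiber with one boundary component) and $-\chi=\langle\beta_n,3u+t\rangle=2n+1$, giving $\Sigma_{n+1,1}$ for every $n\geq 1$; there the genus increments by one with $n$ and no parity problem arises.

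The gap is in the closed case. In the $\Sigma_{3,0}$ example of Section \ref{numerical2} the dual of the fibered face is $4u$, so every primitive class $a\alpha_1+b\alpha_2$ in that cone has Thurston norm $4a$ and closed fiber of genus $2a+1$: the genus parity is locked, and this cone realizes only odd genus. Your ``cofinite in an arithmetic progression'' argument therefore misses an entire congruence class of genera, not ``finitely many small'' ones. The fallback moves you propose do not repair this: a degree-$d$ cover of a closed surface of odd genus $g$ has genus $d(g-1)+1$, which is again odd, and there is no operation of ``capping off a boundary component via a cover,'' so the $\Sigma_{2,1}$ example cannot feed the closed case either. The paper closes this hole with a genuinely separate construction: the cyclic branched cover of the genus-$2$ Penner example along the orbit class $c'=2u+2t$ (Section \ref{numerical}), whose fibered cone contains classes $\beta_n$ with $-\chi=4n+2$, realizing every even genus $2n+2\geq 4$ (Theorem \ref{even}). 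To complete your plan you would need to exhibit such an even-genus closed example explicitly and verify its irrationality by the same algebraic-number-theoretic/numerical method, rather than hoping the odd-genus example and covering moves suffice.
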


Actually, the author believes that for all surfaces $S$ with negative Euler characteristic and $S \ne \Sigma_{0,3}, \Sigma_{0,4}, \Sigma_{1,1},
\Sigma_{1,2}, \Sigma_{2,0}$, i.e. $|\chi(S)|\geq 3$, $S$ admits a pseudo-Anosov map $\phi$ such that $A(S,\phi)\ne \mathbb{Q}$. However, the author do not have enough
examples to produce pseudo-Anosov maps with irrational minimal point on all these surfaces.

In Section \ref{transsec}, we will show that the coordinate $r_i$ will be in the form of $\log(\alpha)/\log(\beta)$, here $\alpha, \beta$ are algebraic numbers. By an equivalent
formulation of Hilbert's seventh problem, $\log(\alpha)/\log(\beta)$ is either a rational or a transcendental number (\cite{FN}  Theorem 3.2).
So we call $A(S,\phi)$ a transcendental invariant.

In this paper, we will study two different constructions to produce pseudo-Anosov maps which may have $A(S,\phi)\ne \mathbb{Q}$.

{\bf First Construction.} We will introduce the first construction in Section \ref{cons1}, it is given by drilling (or cyclic branched covering) along a closed orbit of the suspension flow. Suppose $N=M(S,\phi)$ is a closed hyperbolic surface bundle, $c$ is a oriented closed orbit of the suspension flow and intersect with $S$ positively. We drill the manifold $N$ along $c$ to get another hyperbolic surface
bundle $N\setminus c$ with natural inclusion $i:\ N\setminus c \rightarrow N$. For the dilatation function, $\lambda(\alpha)=\lambda(i^*(\alpha))$
for $\alpha$ lie in fibered cone $C$. However, for the Thurston norm, $\| i^*(\alpha) \|=\| \alpha \|+\langle \alpha, c \rangle$. The difference of $\| i^*(\alpha) \|$ from $\| \alpha \|$ depends on the cohomology
class $\alpha$, which may produce irrationality.

The fibered cone $C$ corresponds to a homology class $x\in H_1(N;\mathbb{Z})/Tor$, such that $\| \alpha \|= \\ \langle \alpha, x \rangle$ for any $\alpha \in C$. We define
two oriented closed orbits $c_1$ and $c_2$ to be drilling equivalent if $[c_1]+x$ is linear dependant with $[c_2]+x$ in $H_1(N;\mathbb{Z})/Tor$. Then we have the following Drilling Theorem.

\begin{thm}\label{dr}(Drilling Theorem)
For all but finitely many drilling equivalence classes of closed orbit, the drilling construction give irrational minimal point.
\end{thm}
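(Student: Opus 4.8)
The plan is to transport the question back from the drilled manifold $E:=N\setminus c$ to the fixed data $(\lambda_C,\Theta_F)$ on $N$, express the rationality of the minimal point through a polynomial system governed by the Teichmuller polynomial, and then combine Gelfond--Schneider with a Diophantine finiteness statement.

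\emph{Step 1 (transporting the data).} Because $c$ is a closed orbit of the suspension flow crossing every fiber positively, $[c]\neq 0$ in $H_1(N;\mathbb Q)$, so drilling leaves $b_1$ unchanged and $i^{*}\colon H^{1}(N;\mathbb R)\to H^{1}(E;\mathbb R)$ is an isomorphism (the relative group $H^{1}(N,E;\mathbb R)$ vanishes). Under this identification the fibered cone of $E$ is $i^{*}(C)=:C'$, the dilatation functions agree, $\lambda_{C'}\circ i^{*}=\lambda_C$, and by the norm formula above $\|i^{*}\alpha\|_E=\langle\alpha,\,x+[c]\rangle$ on $C$. Hence the minimal point of $E$ is $i^{*}(m_c)$, where $m_c$ is the point of $C$ on the hyperplane $\langle\alpha,\,x+[c]\rangle=1$ at which $\lambda_C$ is minimal (unique by Matsumoto's theorem). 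Since $x+[c]$ is an integral class, $A_{C'}=\mathbb Q$ if and only if $m_c$ is a rational cohomology class; and since $m_c$ depends only on the ray through $x+[c]$, drilling-equivalent orbits do give the same invariant. It therefore suffices to prove that $m_c$ is irrational for all but finitely many of the rays $[x+[c]]$.

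\emph{Step 2 (the Teichmuller-polynomial system).} By Lagrange multipliers, $[m_c]$ is the unique point of $\mathbb P(C)$ at which $\nabla\log\lambda_C$ is proportional to $x+[c]$. Writing $\Theta_F=\sum_g a_g\,g$ and differentiating McMullen's relation $\sum_g a_g\,\lambda_C(\alpha)^{\langle\alpha,g\rangle}=0$ implicitly, $\nabla\log\lambda_C(\alpha)$ is proportional in $H_1(N;\mathbb R)$ to $\sum_g a_g\,\lambda_C(\alpha)^{\langle\alpha,g\rangle}\,g$. Fixing a basis $\{g_j\}$ of $H_1(N;\mathbb Z)/\mathrm{Tor}$ and setting $T:=\lambda_C(m_c)$, $z_j:=T^{\langle m_c,g_j\rangle}$, the point $z=(z_1,\dots,z_n)$ satisfies the polynomial system
\[
  \Theta_F(z)=0,\qquad w_j\,z_i\,\partial_{z_i}\Theta_F(z)-w_i\,z_j\,\partial_{z_j}\Theta_F(z)=0\quad(1\le i<j\le n),
\]
with $w=x+[c]$. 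Matsumoto's strict concavity of $1/\log\lambda_C$ says precisely that the direction of $\nabla\log\lambda_C$ varies non-degenerately along $\{\Theta_F=0\}$, so $z$ is an \emph{isolated} solution of this system; as the system has coefficients in $\mathbb Q$, the numbers $T$ and $z_j$ are algebraic. Since $\langle m_c,g_j\rangle=\log z_j/\log T$, the equivalent form of Hilbert's seventh problem cited above forces each coordinate of $m_c$ to be rational or transcendental. In particular, for \emph{every} closed orbit, either $A_{C'}=\mathbb Q$ or $A_{C'}$ contains a transcendental number.

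\emph{Step 3 (finiteness) and the main obstacle.} Suppose $m_c$ is rational, with primitive integral representative $\beta$ and $T=\lambda(\beta)$. Bezout bounds the number of isolated solutions of the system by a constant $C(\Theta_F)$ independent of $c$; as $w$ is integral the whole $\mathrm{Gal}(\overline{\mathbb Q}/\mathbb Q)$-orbit of $z=(T^{\langle\beta,g_1\rangle},\dots,T^{\langle\beta,g_n\rangle})$ consists of solutions, and primitivity of $\beta$ gives $\mathbb Q(z_1,\dots,z_n)=\mathbb Q(T)$, so $[\mathbb Q(\lambda(\beta)):\mathbb Q]\le C(\Theta_F)$. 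Because the assignment $[x+[c]]\mapsto[m_c]$ is injective, it remains only to show that finitely many primitive classes $\beta\in C$ have $[\mathbb Q(\lambda(\beta)):\mathbb Q]$ bounded. Here $\lambda(\beta)$ is a root of the lacunary polynomial $\Theta_F\bigl(X^{\langle\beta,g_1\rangle},\dots,X^{\langle\beta,g_n\rangle}\bigr)$, whose number of monomials is at most $|\mathrm{supp}\,\Theta_F|$; the conclusion then follows from a Diophantine finiteness theorem --- Schinzel's bound on the non-cyclotomic part of lacunary polynomials together with Laurent's theorem on curves in $\mathbb G_m^n$ meeting subgroups of finite rank (applied to $\{\Theta_F=0\}$, which is not a translate of a proper subtorus), or, for $b_1(N)\ge 3$, the Bombieri--Masser--Zannier theorem applied to the one-parameter subgroups containing $z$. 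I expect this last step to be the main obstacle: passing from the uniform degree bound to genuine finiteness, selecting a Diophantine input that works uniformly in $b_1(N)$, and disposing of the (harmless) directions $w$ at which the polynomial system degenerates, is where the real difficulty lies, whereas Steps 1--2 are essentially bookkeeping on top of the cited theorems of McMullen and Matsumoto.
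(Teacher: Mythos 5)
Steps 1 and 2 of your proposal match the paper's argument: the transport of the problem to the tilted slice $\{\langle\alpha,x+[c]\rangle=1\}$ of the fixed cone $C$, the polynomial system obtained by differentiating McMullen's relation, the resulting uniform degree bound $[\mathbb{Q}(\lambda(\beta)):\mathbb{Q}]\le D$ under the rationality hypothesis, and the Gelfond--Schneider dichotomy are all exactly what the paper does. The problem is Step 3, which you yourself flag as unresolved, and which is precisely where the paper's real work lies. Your reduction there is also too lossy: you claim it ``remains only to show that finitely many primitive classes $\beta\in C$ have $[\mathbb{Q}(\lambda(\beta)):\mathbb{Q}]$ bounded.'' That statement is stronger than what is needed and there is no reason to expect it to be true --- by Perron--Frobenius and the monic, bounded-coefficient leading behaviour of $\Theta_F(X^{\langle\beta,\cdot\rangle})$, a degree bound forces $\lambda(\beta)$ to take one of finitely many \emph{values} $\lambda_0$, but a level hypersurface $\{\lambda(\alpha)=\lambda_0\}$ is merely a strictly convex hypersurface and such sets can perfectly well contain infinitely many integer points. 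Discarding the information that $\beta$ is the \emph{minimum} of $\lambda$ on its hyperplane throws away the one constraint that makes finiteness provable.

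The paper's Step 3 uses exactly that constraint, and does so elementarily, with no Diophantine input beyond Matsumoto's strict concavity. Having reduced to infinitely many integer points $\beta'=q_c m_{F_c}$ on a single level set $\lambda=\lambda_0$, it fixes one such $\beta$ and shows that any other one must fail one of the finitely many inequalities $p_i(\beta')\ge p_i(\beta)$ cutting out a translated subcone: if $\beta'-\beta$ lay in $\bar{C}$, then $\langle\beta'-\beta,c+x\rangle>0$ and the rescaled point $\frac{\langle\beta',c+x\rangle}{\langle\beta,c+x\rangle}\beta$ would lie on the same hyperplane as $\beta'$ with strictly smaller dilatation (by homogeneity $\lambda(s\alpha)=\lambda(\alpha)^{1/s}$), contradicting minimality of $\beta'$. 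This traps the infinitely many points in finitely many codimension-one slices, and induction on dimension reduces to a ray, where strict concavity of $1/\log\lambda$ allows at most two points with $\lambda=\lambda_0$. So the machinery you propose to import (Schinzel, Laurent, Bombieri--Masser--Zannier) is not needed, and as you present it it would not close the gap anyway, since it is aimed at the wrong (and likely false) finiteness statement. To repair your argument, reinstate the minimality of $\beta'$ on its own hyperplane and run the convexity induction above.
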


There is also an analogous construction by using cyclic branched cover along closed orbits and a similar theorem (Theorem \ref{branch}). However, due to some technical reason, the statement there is not as neat as in Theorem \ref{dr}.

In Lemma \ref{infty}, we will show that, for any pseudo-Anosov map $(S,\phi)$, the mapping torus $M(S,\phi)$ has infinitely many different drilling classes and also infinitely many branched covering classes satisfying the technical condition in Theorem \ref{branch}. So we have irrational minimal point examples for both closed surfaces and surfaces with boundary.

In Section \ref{example}, we will study some simple drilling class and branched covering class for an explicit example. An alternative method of proving irrationality of $A(S,\phi)$ will be developed there. This method uses some algebraic number theory and numerical computation.

{\bf Second Construction.} The second construction is given in Section \ref{cons2}, which comes from Penner's construction \cite{Pe}. In his construction, Penner took two families of disjoint simple closed curves $\{a_i\}$ and $\{b_j\}$ on surface $S$, such that the union of $\{a_i\}$ and $\{b_j\}$ fill $S$. Let $\mathcal{D}(a^+,b^-)$ be the semigroup generated by positive twists along $a$ curves and negative twists along $b$ curves. Take any $\phi \in \mathcal{D}(a^+,b^-)$ such that each twist along $\{a_i\}$ and $\{b_j\}$ appears at least once in the presentation of $\phi$, then $\phi$ is a pseudo-Anosov map and an invariant bigon track $\tau$ is constructed explicitly. The invariant bigon track $\tau$ only depends on the two families of curves $\{a_i\}$ and $\{b_j\}$, but does not depend on $\phi$. In Proposition \ref{algsymmetry}, we will show that for some special $\phi\in \mathcal{D}(a^+,b^-)$, $A(S,\phi)=\mathbb{Q}$ holds, while $\phi$ does not admit geometric symmetry as in Proposition \ref{symmetry}. On the other hand, in Section \ref{numerical2}, we will give an explicit element in the semigroup not satisfying the condition in Proposition \ref{algsymmetry}, and use the numerical method in Section \ref{example} to show that $A(S,\phi)\ne \mathbb{Q}$.

Being an invariant of fibered cone $C$, $A_C$ is actually not an invariant of $3$-manifold. In Section \ref{globalsec}, we will give an example that is
obtained by Dehn-filling of the magic-manifold. The filled manifold has six fibered faces, one pair of them have $A_C=\mathbb{Q}$, while the other two pairs have $A_C\ne \mathbb{Q}$.

{\bf Acknowledgement:} The author is grateful to his advisor David Gabai for many helpful conversations. The author thanks Shicheng Wang for valuable conversations, Pierre Dehornoy, Eriko Hironaka and Curtis McMullen for comments on a preliminary draft, and Siyu Yang for computational assistance.

\section{McMullen's Work on Teichmuller Polynomial}\label{McMullen}
In this section, we will review McMullen's work on Teichmuller Polynomial. All the material in this section can be found in \cite{McM}.

Given a hyperbolic surface bundle $N$, let $F\subset H^1(N;\mathbb{R})$ be an open fibered face, McMullen defined a polynomial invariant $\Theta_F$ called Teichmuller polynomial of $F$. The Teichmuller polynomial $\Theta_F=\sum_g a_g\cdot g$ lies in the group ring $\mathbb{Z}[G]$, here $G=H_1(N;\mathbb{Z})/Tor$.

For a fibered face $F\subset H^1(N;\mathbb{R})$, it is also associated with a suspended lamination $\mathcal{L}\subset M$, which is the suspension of invariant stable lamination for some surface bundle structure lying in $C=\mathbb{R}^+\cdot F$. Moreover, $\mathcal{L}$ is universal for this fibered face $F$, i.e. it does not depend on which surface bundle structure one choose. Let $p:\ \tilde{M}\rightarrow M$ be the maximal free abelian cover with deck transformation group $G$, and let $\tilde{\mathcal{L}}=p^{-1}(\mathcal{L})$.

The module of lamination $T(\tilde{\mathcal{L}})$ is defined in \cite{McM} Section 2. $T(\tilde{\mathcal{L}})$ is a $\mathbb{Z}$-module generated by transversals $[T]$ of $\tilde{\mathcal{L}}$ modulo equivalent relations:\\
1) $[T]=[T']+[T'']$ if $T$ is disjoint union of $T'$ and $T''$, \\
2) $[T]=[T']$ if $T$ is isotopic with $T'$ with respect to intersection with $\tilde{\mathcal{L}}$.\\
Since $\tilde{\mathcal{L}}$ admits $G$-action by deck transformation, $T[\tilde{\mathcal{L}}]$ is a $\mathbb{Z}[G]$-module. Then $\Theta_F$ is defined to be the Alexander polynomial of $T[\tilde{\mathcal{L}}]$ as a $\mathbb{Z}[G]$-module.

$\Theta_F$ has a few nice properties and we will list some of them in this section.

At first, $\Theta_F$ can be computed effectively if one knows the monodromy $\phi$ on some fiber surface $S$ with $[S]\in C$. Let $\tau\subset S$ be the invariant train track of $\phi$ carrying the stable lamination with switch set $V$ and branch set $E$. Let $\tilde{S}$ be one component of $p^{-1}(S)$ and $\tilde{\tau}\subset \tilde{S}$ be the component of $p^{-1}(\tau)$ lying in $\tilde{S}$. $H_1(M;\mathbb{Z})/Tor$ decomposes as $\mathbb{Z}[u]\oplus T$, here $u\cap [S]=1$ and $t\cap [S]=0$ for any $t\in T$. We will always fix such a decompostion of $H_1(M;\mathbb{Z})/Tor$ in this paper. There is a natural $T$ action on $\tilde{\tau}$, so the module of branches of $\tilde{\tau}$ can be identified with $\mathbb{Z}[T]^E$ and so does the module of switches $\mathbb{Z}[T]^V$. Given a $T$-invariant collapsing $\tilde{\phi}(\tilde{\tau})\rightarrow \tilde{\tau}$, we have a $\mathbb{Z}[T]$-module map $P_E:\mathbb{Z}[T]^E\rightarrow \mathbb{Z}[T]^E$ with matrix $P_E(t)$, and also a map $P_V$ on switches with matrix $P_V(t)$.  Then McMullen showed that:

\begin{thm}\label{formula}(\cite{McM} Theorem 3.6)
The Teichmuller polynomial of the fibered face $F$ is given by: $$\Theta_F(t,u)=\frac{det(uI-P_E(t))}{det(uI-P_V(t))}$$ when $b_1(M)>1$.
\end{thm}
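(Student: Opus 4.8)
The plan is to compute the Alexander polynomial of the $\mathbb{Z}[G]$-module $T(\tilde{\mathcal{L}})$ by presenting it explicitly in terms of the train track data, and then to extract the ratio of determinants. First I would identify $T(\tilde{\mathcal{L}})$ with a module built from the invariant train track $\tau\subset S$: since $\mathcal{L}$ is carried by $\tau$ and the complementary regions of $\tau$ are discs (or once-punctured discs), a transversal to $\tilde{\mathcal{L}}$ is determined, up to the equivalence relations (1) and (2), by how many leaves it cuts in each branch of $\tilde{\tau}$. This gives a surjection $\mathbb{Z}[T]^E \twoheadrightarrow T(\tilde{\mathcal{L}})\big|_{\tilde S}$ from the module of branches of $\tilde\tau$, whose kernel is the span of the switch relations, i.e.\ the image of $\mathbb{Z}[T]^V$ under the incidence map $\partial(t)$; thus as a $\mathbb{Z}[T]$-module the fiber data is $\mathrm{coker}\big(\mathbb{Z}[T]^V \xrightarrow{\partial(t)} \mathbb{Z}[T]^E\big)$.

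Next I would incorporate the $u$-variable (the direction transverse to the fiber) via the monodromy. The deck group is $G = \mathbb{Z}[u]\oplus T$ with $u$ dual to $[S]$, so $T(\tilde{\mathcal{L}})$ is obtained from the fiberwise module by the standard mapping-torus / ``$\mathbb{Z}[u]$-adic'' construction: a transversal in $\tilde M$ can be pushed by the suspension flow across copies of $\tilde S$, and the induced identification is exactly the $T$-equivariant action of $\tilde\phi$ on $\tilde\tau$, i.e.\ multiplication by the matrices $P_E(t)$ on branches and $P_V(t)$ on switches. Concretely, $T(\tilde{\mathcal{L}})$ has the $\mathbb{Z}[G] = \mathbb{Z}[T][u^{\pm 1}]$-presentation
$$
\mathbb{Z}[G]^V \oplus \mathbb{Z}[G]^E \xrightarrow{\ \left(\begin{smallmatrix} uI - P_V(t) & \partial(t) \\ 0 & uI - P_E(t)\end{smallmatrix}\right)\ } \mathbb{Z}[G]^E,
$$
where the $uI - P_E(t)$ block records the monodromy in the branch direction, the $uI - P_V(t)$ block encodes that the switch relations are themselves carried along by $\tilde\phi$, and $\partial(t)$ is the incidence matrix. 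Taking the Alexander polynomial — the gcd of the appropriate minors of a presentation matrix — of this module yields $\det(uI - P_E(t))/\det(uI - P_V(t))$, the extra factor in the denominator coming from the fact that the switch relations are not free but are themselves a quotient, so one must divide by their ``characteristic polynomial.'' The hypothesis $b_1(M) > 1$ guarantees $T \neq 0$, so that $\det(uI - P_V(t))$ is a genuine divisor in $\mathbb{Z}[G]$ and the quotient makes sense as an element of the group ring rather than merely the fraction field.

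The main obstacle I expect is making the last step — the passage from the $2\times 2$ block presentation to the ratio of determinants — rigorous as a statement about Alexander \emph{polynomials} (gcd of Fitting ideals) rather than about Reidemeister torsion: one has to check that the minors of the block matrix have gcd exactly $\det(uI-P_E(t))$ and that collapsing the switch relations correctly divides this by $\det(uI-P_V(t))$, which requires knowing that $\partial(t)$ and $uI-P_V(t)$ have no spurious common factor. This is where one invokes properties of $P_E, P_V$ coming from the fact that $\tilde\tau$ is a genuine invariant train track (the collapsing map is a homotopy equivalence on the underlying graphs, so the mapping cone of $\partial$ is acyclic after inverting $u$), together with a torsion-vs-determinant comparison; I would handle it by the multiplicativity of torsion in the short exact sequence relating branches, switches, and the complementary cell structure, exactly as McMullen does, and refer to \cite{McM} for the detailed verification.
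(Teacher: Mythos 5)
The paper offers no proof of this statement: it is quoted as background from \cite{McM} (Theorem 3.6) in Section \ref{McMullen}, so there is no in-paper argument to measure yours against; what follows assesses your sketch against McMullen's actual proof. Your outline has the right skeleton: identify the fiberwise transversal module with $\mathrm{coker}\bigl(\mathbb{Z}[T]^V\to\mathbb{Z}[T]^E\bigr)$ via the switch relations of the invariant train track, observe that the suspension flow imposes the further identification $u\cdot[T]=[\tilde\phi(T)]$, and compute the order of the resulting $\mathbb{Z}[G]$-module as a quotient of characteristic polynomials via multiplicativity of torsion along the two-step resolution. That is indeed how McMullen proceeds.

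Two problems keep this from being a proof. First, the displayed presentation is dimensionally inconsistent: a $2\times 2$ block matrix whose diagonal blocks are $uI-P_V(t)$ (size $V\times V$) and $uI-P_E(t)$ (size $E\times E$) maps $\mathbb{Z}[G]^V\oplus\mathbb{Z}[G]^E$ to $\mathbb{Z}[G]^V\oplus\mathbb{Z}[G]^E$, not to $\mathbb{Z}[G]^E$. The presentation one actually obtains is the $E\times(V+E)$ matrix $\bigl(\,\partial(t)\mid uI-P_E(t)\,\bigr)$; equivalently, $T(\tilde{\mathcal{L}})$ is the cokernel of $uI-P_E(t)$ acting on $\mathbb{Z}[G]\otimes_{\mathbb{Z}[T]}\mathrm{coker}\,\partial(t)$, and the block matrix you want appears only after splicing in a free resolution of that cokernel. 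Second, and more seriously, the step you explicitly defer to \cite{McM} --- that the order ideal of this module is generated by exactly $\det(uI-P_E(t))/\det(uI-P_V(t))$, including the verification that the denominator genuinely divides the numerator in $\mathbb{Z}[G]$ and that no spurious factors survive in the gcd of minors --- is the entire content of the theorem, so the proposal amounts to a road map to McMullen's proof rather than a proof. Your reading of the hypothesis $b_1(M)>1$ is also not the right one: the point is not that $T\neq 0$ makes the denominator a ``genuine divisor,'' but the usual rank-one anomaly in the definition of Alexander-type invariants (as for knot complements), which is why the formula is asserted only for $b_1(M)>1$.
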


Teichmuller polynomial $\Theta_F$ also has nice symmetric property as Alexander polynomial:

\begin{thm}\label{reverse}(\cite{McM} Corollary 4.3)
The Teichmuller polynomial is symmetric, i.e. $$\Theta_F=\sum_g a_g\cdot g=\pm h \sum_g a_g\cdot g^{-1}$$ for some unit $h\in \mathbb{Z}[G]$.
\end{thm}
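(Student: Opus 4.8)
The plan is to prove this the way one proves symmetry of any Alexander-type polynomial: show that the invariant underlying $\Theta_F$ --- the order ideal of the module of the lamination $T(\tilde{\mathcal{L}})$ as a $\mathbb{Z}[G]$-module --- is carried to itself, up to multiplication by a unit of $\mathbb{Z}[G]$, under the standard involution $g\mapsto\bar g:=g^{-1}$. The mechanism will be a \emph{hermitian-up-to-units} presentation: a presentation matrix of $T(\tilde{\mathcal{L}})$ --- or, in the train-track picture, each of the square matrices occurring in Theorem \ref{formula} --- equal to its own conjugate transpose after multiplication on the two sides by diagonal matrices of units of $\mathbb{Z}[G]$. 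Given such data, taking determinants and applying $\bar\cdot$ yields $\overline{\Theta_F}=\pm h\,\Theta_F$, where $h$ is the unit produced by those diagonal matrices; thus $h$ and the sign in the statement simply record the discrepancy between $T(\tilde{\mathcal{L}})$ and its $\mathbb{Z}[G]$-dual at the level of ideals rather than of their orders alone.

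Concretely I would run this through the formula of Theorem \ref{formula}. Since $\Theta_F(t,u)=\det(uI-P_E(t))/\det(uI-P_V(t))$, it suffices to show that each of the two determinants is symmetric up to a unit of $\mathbb{Z}[G]$ and that the two units coincide, so that the quotient inherits one well-defined $h$. For the numerator I would compare the $T$-equivariant collapsing map $\widetilde{\phi(\tau)}\to\tilde\tau$ that defines $P_E(t)$ with the adjoint combinatorial data attached to the dual train track of $\tau$ --- the one recording the cusps/switches and carrying the unstable direction of $\phi$. The point to establish is that the branch transition matrix of that dual picture equals $\overline{P_E(t)}^{\,\mathsf T}$ after a diagonal change of variable recording the $T$-weights of the branches (and a relabelling of the branch set), while the fibration generator is sent to its inverse, $u\mapsto u^{-1}$. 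Substituting into $\det(uI-\cdot)$ and using $\overline u=u^{-1}$ then gives the symmetry of $\det(uI-P_E(t))$ together with an explicit diagonal unit, and the parallel computation on switches handles $\det(uI-P_V(t))$ with a compatible unit.

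The structural reason such a hermitian presentation exists is Poincar\'e--Lefschetz duality on the mapping torus. The manifold $M=M(S,\phi)$ is aspherical, and the suspended lamination $\mathcal{L}$ is a taut, transversely measured $2$-dimensional lamination that fills $M$ --- its complementary regions are the suspensions of the ideal complementary polygons of the invariant lamination of $\phi$ in $S$. Lifting to the free abelian cover $\tilde M$, the module $T(\tilde{\mathcal{L}})$ is naturally a (co)homology module of the pair $(\tilde M,\tilde M\setminus\tilde{\mathcal{L}})$, and duality equips this module with a non-degenerate $\mathbb{Z}[G]$-sesquilinear self-pairing; the involution $\bar\cdot$ enters, as in every such duality, through the flip of the $G$-action when one passes from chains to cochains. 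Translating the self-pairing into a presentation matrix produces exactly the congruence $\overline A^{\,\mathsf T}=D_1AD_2$ of the first paragraph, and reading it through the train-track model of $T(\tilde{\mathcal{L}})$ reproduces the matrix computation above; this also explains geometrically why the fibration variable gets inverted, since the transverse measures $\mu^+$ and $\mu^-$ are interchanged by the duality, and the flow-reversing homeomorphism $M(S,\phi)\to M(S,\phi^{-1})$, $(x,t)\mapsto(x,1-t)$, carries $\mathcal{L}$ to the unstable suspended lamination.

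The step I expect to be the main obstacle is making this duality rigorous, because $\mathcal{L}$ is non-compact: its leaves are planes, half-planes and cylinders, so Poincar\'e--Lefschetz duality for the pair $(\tilde M,\tilde{\mathcal{L}})$, and the identification of $T(\tilde{\mathcal{L}})$ with the relevant (co)homology module, cannot simply be quoted --- they have to be set up directly, with care about the ends of leaves, about transversals meeting $\partial M$ in the cusped case, and about compatibility with both the deck group $G$ and the transverse measures $\mu^\pm$. A secondary, bookkeeping-heavy point is to pin down the unit $h\in\mathbb{Z}[G]$ and the overall sign, which requires tracking the actual module isomorphisms through the collapsing maps and not merely their effect on order ideals. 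Once the $G$-equivariant self-pairing --- equivalently, the hermitian-up-to-units presentation matrix --- is in hand, the remainder is formal manipulation of elementary ideals.
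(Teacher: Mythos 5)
First, a point of reference: the paper does not prove this statement at all. Theorem \ref{reverse} is quoted verbatim from McMullen (\cite{McM}, Corollary 4.3), and Section \ref{McMullen} explicitly says all material there is taken from that source. So there is no in-paper proof to compare against; your proposal has to be measured against McMullen's argument. On that score you have correctly identified the two mechanisms that drive the result: the invariant (bigon) track $\tau$ of $\phi$ has a dual track invariant under $\phi^{-1}$ whose transition data is the transpose of $P_E$, and the flow-reversing identification of $M(S,\phi)$ with $M(S,\phi^{-1})$ sends the face $F$ to $-F$ and acts on the deck group by $g\mapsto g^{-1}$. Combining these is exactly how the symmetry is obtained, so the intended route is the right one.

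The gap is that the step carrying all the content is the one you defer. Your plan rests on producing a ``hermitian-up-to-units'' presentation of $T(\tilde{\mathcal{L}})$, and you propose to get it from Poincar\'e--Lefschetz duality for the pair $(\tilde M,\tilde M\setminus\tilde{\mathcal{L}})$ --- which, as you concede, cannot be quoted for a non-compact, non-tame lamination and would have to be constructed from scratch; until that is done, nothing in the first two paragraphs is established. Moreover, as stated the combinatorial version of your claim does not yet give what you want: the dual track carries the \emph{unstable} lamination and is invariant under $\phi^{-1}$, so the identity ``dual transition matrix $=\overline{P_E(t)}^{\mathsf T}$ up to diagonal units, with $u\mapsto u^{-1}$'' relates $\Theta_F$ to $\Theta_{-F}$, i.e.\ it yields $\Theta_{-F}\doteq\overline{\Theta_F}$. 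To conclude $\Theta_F\doteq\overline{\Theta_F}$ you still need the separate fact that $\Theta_F$ and $\Theta_{-F}$ agree up to units --- equivalently, that the modules of the suspended stable and unstable laminations are isomorphic over $\mathbb{Z}[G]$ --- and that identification is an independent piece of work, not a formality. Finally, your reduction to showing that $\det(uI-P_E(t))$ and $\det(uI-P_V(t))$ are \emph{each} symmetric with matching units is stronger than necessary and is not obviously attainable for the numerator alone (it is easy for the denominator, since $P_V$ is monomial); the symmetry should be argued for the order ideal of $T(\tilde{\mathcal{L}})$, i.e.\ for the quotient, which is what McMullen does.
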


In this paper, the most important property of Teichmuller polynomial we will use is, $\Theta_F$ can compute dilatation function $\lambda(\cdot)$ effectively.

\begin{thm}\label{dilatation}(\cite{McM} Theorem 5.1)
The dilatation function $\lambda(\cdot)$ satisfies $$\lambda(\alpha)=sup\{k>1|\ 0=\Theta_F(k^{\alpha})=\sum_g a_g\cdot k^{\langle \alpha,g \rangle}\}$$
for any $\alpha\in C$.
\end{thm}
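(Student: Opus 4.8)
The plan is to prove the formula first for integral classes $\alpha\in C\cap H^1(N;\mathbb{Z})$, where $\Theta_F(k^\alpha)=\sum_g a_g k^{\langle\alpha,g\rangle}$ is a genuine one-variable Laurent polynomial in $k$ with finitely many roots, and then to extend to all of $C$ using Fried's continuity of $\lambda_C(\cdot)$ and the homogeneity convention $\lambda(\alpha/n)=\lambda(\alpha)^n$. So fix an integral $\alpha\in C$. Since $C$ is a fibered cone, $\alpha$ is represented by a fiber surface $S_\alpha$ with pseudo-Anosov monodromy $\phi_\alpha$, and $\lambda(\alpha)=\lambda(\phi_\alpha)$ by definition.

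The key point I would exploit is that the $\mathbb{Z}[G]$-module $T(\tilde{\mathcal{L}})$, hence $\Theta_F$ itself, is intrinsic to the fibered face $F$ and does not depend on which fibration in $C$ is used to present it: the suspended lamination $\mathcal{L}$ is universal for $F$, and it restricts on the fiber $S_\alpha$ to the stable lamination of $\phi_\alpha$, carried by an invariant train track $\tau_\alpha$ with switch set $V_\alpha$ and branch set $E_\alpha$. Thus I would apply Theorem \ref{formula} not with respect to the splitting $G=\mathbb{Z}[u]\oplus T$ adapted to $[S]$, but with respect to a splitting $G=\mathbb{Z}[u_\alpha]\oplus T_\alpha$ adapted to $\alpha$, i.e.\ with $\langle\alpha,u_\alpha\rangle=1$ and $\langle\alpha,t\rangle=0$ for all $t\in T_\alpha$. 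Because $\Theta_F$ is an invariant of the module up to units, this yields
$$\Theta_F=\frac{\det\big(u_\alpha I-P^\alpha_E(t_\alpha)\big)}{\det\big(u_\alpha I-P^\alpha_V(t_\alpha)\big)},$$
where $P^\alpha_E(t_\alpha)$ and $P^\alpha_V(t_\alpha)$ are the branch and switch transition matrices associated to a $T_\alpha$-invariant collapsing of $\tilde\phi_\alpha(\tilde\tau_\alpha)$ onto $\tilde\tau_\alpha$.

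Next I would specialize along $\alpha$, i.e.\ apply the ring homomorphism $\mathbb{Z}[G]\to\mathbb{R}$ sending $g\mapsto k^{\langle\alpha,g\rangle}$ for a real parameter $k>0$. Under it $u_\alpha\mapsto k$ and every $t\in T_\alpha$ maps to $1$, so
$$\Theta_F(k^\alpha)=\frac{\det\big(kI-P^\alpha_E(1)\big)}{\det\big(kI-P^\alpha_V(1)\big)}.$$
Now $P^\alpha_E(1)$ is, up to transpose, exactly the non-negative integer transition matrix of the train-track map of $\phi_\alpha$ on the branches of $\tau_\alpha$; it is a Perron--Frobenius matrix whose leading eigenvalue is the dilatation $\lambda(\phi_\alpha)=\lambda(\alpha)>1$, and this is a simple root of the numerator while no real root of the numerator exceeds it. Similarly $P^\alpha_V(1)$ is the transition on switches, whose spectral radius is strictly smaller, so $\lambda(\alpha)$ is not a root of the denominator and survives as the largest root $k>1$ of $\Theta_F(k^\alpha)$. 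This establishes the integral case; the rational case follows from $\Theta_F(k^{m\alpha})=\Theta_F((k^m)^\alpha)$ together with $\lambda(\alpha/n)=\lambda(\alpha)^n$, and the general case follows by approximating an arbitrary $\alpha\in C$ by rational classes and invoking the continuity of $\lambda_C$ (Fried) and of $(\alpha,k)\mapsto\Theta_F(k^\alpha)$.

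The hard part will be the input just before the specialization: rigorously justifying that the universal lamination $\mathcal{L}$ --- and with it the module $T(\tilde{\mathcal{L}})$ and the determinant formula of Theorem \ref{formula} --- is computed by the invariant train track of $\phi_\alpha$ for \emph{every} integral fibered $\alpha\in C$, which is exactly where universality of $\mathcal{L}$ over the whole fibered face is used, together with the bookkeeping needed to see that $\det\big(u_\alpha I-P^\alpha_V(t_\alpha)\big)$ never cancels the Perron root $\lambda(\alpha)$ of the numerator after specializing. A secondary point requiring care is the passage to irrational $\alpha$, where $\Theta_F(k^\alpha)$ is no longer a polynomial: one must check not only that $\lambda(\alpha)$ is a zero but that there is no larger one, which can be arranged using the concavity of $1/\log\lambda_C$ from Fried's theorem.
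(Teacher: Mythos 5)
The paper itself offers no proof of this statement: Theorem \ref{dilatation} is quoted verbatim from McMullen (\cite{McM}, Theorem 5.1), and Section \ref{McMullen} says explicitly that all of its material is taken from that paper. So your proposal is really a reconstruction of McMullen's argument, and for integral classes it is essentially the right one: universality of the suspended lamination $\mathcal{L}$ means the module $T(\tilde{\mathcal{L}})$, hence $\Theta_F$ up to units, can be computed from the invariant train track of $\phi_\alpha$ using a splitting $G=\mathbb{Z}[u_\alpha]\oplus T_\alpha$ adapted to $\alpha$; specializing $g\mapsto k^{\langle\alpha,g\rangle}$ turns the numerator of Theorem \ref{formula} into the characteristic polynomial of the primitive non-negative transition matrix $P^\alpha_E(1)$, whose Perron root is $\lambda(\alpha)$, while $P^\alpha_V(1)$ sends each switch to a single switch and so has spectral radius at most $1$, which is why the denominator cannot cancel that root. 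All of this is sound.

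The genuine gap is your passage to irrational $\alpha$. Joint continuity of $(\alpha,k)\mapsto\Theta_F(k^\alpha)$ together with Fried's continuity of $\lambda_C$ does give that $\lambda(\alpha)$ is a zero of $\Theta_F(k^\alpha)$ for every $\alpha\in C$, i.e.\ the inequality $\lambda(\alpha)\le\sup\{\cdots\}$. The reverse inequality, however, would require the \emph{largest} root to be lower semicontinuous along your rational approximations, and that is false in general: a zero of even multiplicity of the limit function need not be approximated by zeros of nearby functions, so an irrational $\alpha$ could a priori carry a root strictly larger than $\lambda(\alpha)$ that is invisible from rational classes. Concavity of $1/\log\lambda_C$ is a statement about the function $\lambda$, not about the zero set of $\Theta_F(k^\alpha)$, so it cannot exclude such a root; as you have set things up there is nothing identifying the two functions off the rational points. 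The correct mechanism --- McMullen's, and the one Remark \ref{PF} and Corollary \ref{integer} allude to --- is to run Perron--Frobenius theory directly on the specialization for arbitrary real $\alpha\in C$ and real $k>1$: one has $\Theta_F(k^\alpha)=\det\bigl(k^{\langle\alpha,u\rangle}I-P_E(k^{\alpha|_T})\bigr)/\det\bigl(k^{\langle\alpha,u\rangle}I-P_V(k^{\alpha|_T})\bigr)$ with $P_E(k^{\alpha|_T})$ a primitive non-negative real matrix, so any zero forces $k^{\langle\alpha,u\rangle}\le\rho\bigl(P_E(k^{\alpha|_T})\bigr)$, and the monotonicity and log-convexity of the spectral radius (Theorem A.1 of \cite{McM}) show that $k\mapsto k^{\langle\alpha,u\rangle}/\rho\bigl(P_E(k^{\alpha|_T})\bigr)$ crosses $1$ exactly once, at $k=\lambda(\alpha)$. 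This treats all $\alpha\in C$ uniformly and removes the limiting argument, which is where your outline would otherwise break down.
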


\begin{rem}\label{PF}
Actually, what MuMullen showed is $\lambda(\alpha)=sup\{|k|\ |\ 0=\Theta_F(k^{\alpha})\}$. Since the supremum is assumed by a positive real number by the Perron-Frobenius theory, we can state the Theorem as above.
\end{rem}

McMullen also defined the Teichmuller norm (with respect to fibered face $F$) on $H^1(N;\mathbb{R})$. For any $\alpha \in H^1(N;\mathbb{R})$, the Teichmuller norm is defined by $\|\alpha\|_{\Theta_F}=\sup_{a_g\ne 0 \ne a_h}\langle \alpha, g-h\rangle $. Then he proved that the Teichmuller norm $\|\cdot\|_{\Theta_F}$ determines the fibered cone $C$.

\begin{thm}\label{cone}(\cite{McM} Theorem 6.1)
For any fibered face $F$ of the Thurston norm unit ball, there exists a face $D$ of the Teichmuller norm unit ball, such that $\mathbb{R}_+\cdot F=\mathbb{R}_+\cdot D$.
\end{thm}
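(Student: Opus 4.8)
The plan is to show that the Teichmuller norm $\|\cdot\|_{\Theta_F}$ is linear on the closed fibered cone $\overline C=\mathbb{R}_+\cdot\overline F$, and then to extract the face $D$ by polar duality. Write $K\subset H_1(N;\mathbb{R})$ for the convex hull of $\{g-h\mid a_g\neq 0\neq a_h\}$, so that $\|\alpha\|_{\Theta_F}=\max_{y\in K}\langle\alpha,y\rangle$ is the support function of $K$ (which is centrally symmetric, as is the Newton polytope of $\Theta_F$ itself by Theorem~\ref{reverse}). The faces of the Teichmuller unit ball are the polar-dual faces of $K$, and the cone over the facet dual to a vertex $v$ of $K$ is exactly the normal cone $NC(v)=\{\alpha\mid\langle\alpha,v\rangle=\|\alpha\|_{\Theta_F}\}$, which is top-dimensional. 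So the theorem reduces to: (i) $\|\cdot\|_{\Theta_F}$ is linear on the top-dimensional cone $\overline C$, hence $\overline C\subseteq NC(v)$ for a single vertex $v$ of $K$; and (ii) this inclusion is an equality.

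For (i) I would prove the two inequalities $\|\alpha\|_{\Theta_F}\le\|\alpha\|$ and $\|\alpha\|_{\Theta_F}\ge\|\alpha\|$ for $\alpha\in C$, so that $\|\cdot\|_{\Theta_F}$ agrees on $\overline C$ with the Thurston norm, which there equals the linear functional $\langle\cdot,x\rangle$. For the upper bound, regard an integral $\alpha\in C$ as a fibration with fiber $S$ and invariant train track $\tau$ with branch set $E$ and switch set $V$, work in coordinates adapted to $\alpha$, and apply Theorem~\ref{formula}: $\Theta_F=\det(uI-P_E(t))/\det(uI-P_V(t))$ has $u$-degree $|E|-|V|$, and a count of branches and switches of $\tau$ against $\chi(S)$ shows that the width of the Newton polytope of $\Theta_F$ in the $\alpha$-direction is at most $|\chi(S)|=\|\alpha\|$. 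As $\Theta_F$ depends only on $F$, this holds for every integral $\alpha\in C$, hence on $\overline C$ by homogeneity and continuity. For the lower bound I would use divisibility of $\Theta_F$ by the Alexander polynomial $\Delta_N$ in $\mathbb{Z}[G]$: then the Newton polytope of $\Delta_N$ is a Minkowski summand of that of $\Theta_F$, so $\|\cdot\|_{\Theta_F}$ dominates the Alexander norm everywhere, and by McMullen's theorem that the Alexander norm equals the Thurston norm on fibered faces (using $b_1(N)>1$) we obtain $\|\alpha\|_{\Theta_F}\ge\|\alpha\|$ on $\overline C$.

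For (ii), comparing the two linear functionals forces $v=x$, and if $\alpha$ lies in the interior of $NC(v)$ then the monomial $a_v k^{\langle\alpha,v\rangle}$ of $\Theta_F(k^\alpha)=\sum_g a_g k^{\langle\alpha,g\rangle}$ strictly dominates all the others for $k$ large, uniformly on compact subsets of $\mathrm{int}\,NC(v)$; hence by Theorem~\ref{dilatation} the dilatation $\lambda_C(\alpha)$ stays bounded near such $\alpha$. Since $\overline C$ is a top-dimensional convex cone contained in $NC(v)$, if the inclusion were proper its boundary would meet $\mathrm{int}\,NC(v)$, and there $\lambda_C$ would stay bounded as $\alpha\to\partial C$ from inside $C$, contradicting Fried's theorem that $\lambda_C(\alpha)\to\infty$ on $\partial C$. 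Therefore $\overline C=NC(v)=\mathbb{R}_+\cdot D$ for the face $D$ dual to $v$, so in particular $\mathbb{R}_+\cdot F=\mathbb{R}_+\cdot D$.

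The main obstacle is the upper bound in step (i): converting Theorem~\ref{formula} into the sharp estimate that the Newton polytope of $\Theta_F$ is no thicker in a fibered direction than the Thurston norm allows. This requires controlling the lowest $u$-degree terms of $\det(uI-P_E(t))$ and $\det(uI-P_V(t))$ and the cancellation between them; the divisibility $\Delta_N\mid\Theta_F$ used for the lower bound is the other ingredient that must be established carefully.
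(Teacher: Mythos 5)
This theorem is quoted by the paper from \cite{McM} without proof, so I am judging your argument on its own terms. Your overall architecture is sound: identify $\mathbb{R}_+\cdot D$ with the normal cone $NC(v)$ of a vertex $v$ of the difference body $K$ of the support of $\Theta_F$, show $\overline C\subseteq NC(v)$ because $\|\cdot\|_{\Theta_F}$ is linear on $\overline C$, and rule out a proper inclusion by combining Fried's theorem ($\lambda_C\to\infty$ at $\partial C$) with the domination of $\Theta_F(k^\alpha)$ by its extreme monomial on $\mathrm{int}\,NC(v)$. Step (ii) is essentially correct as written (modulo the harmless conflation of the vertex $v=g_0-h_0$ of $K$ with the maximizing vertex $g_0$ of the Newton polytope).

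The genuine gap is in step (i): the identity $\|\alpha\|_{\Theta_F}=\|\alpha\|_T$ on $\overline C$ is false, and the branch/switch count you propose goes the wrong way. Since $\|\alpha\|_{\Theta_F}$ is the width of the support of $\Theta_F$ in the direction $\alpha$, for the fiber class this width is governed by $|E|-|V|=-\chi(\tau)$, and for a filling train track $-\chi(\tau)\geq|\chi(S)|$, with strict inequality whenever $S\setminus\tau$ is nonempty; so the bound $\le|\chi(S)|$ cannot hold. Concretely, for the example in Section 5 of this paper (equation (\ref{Teichmuller})) the support runs from $u^3$ down to $u^{-2}$, so $\|[S]\|_{\Theta_F}=5$ while $\|[S]\|_T=|\chi(\Sigma_{2,0})|=2$; on that fibered cone $\|\cdot\|_{\Theta_F}=\langle\cdot,5u\rangle$ and $\|\cdot\|_T=\langle\cdot,2u\rangle$, proportional but not equal (and in general not even proportional). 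What you actually need from step (i) is only linearity of $\|\cdot\|_{\Theta_F}$ on $\overline C$, i.e., that the maximum and minimum of $\langle\alpha,\cdot\rangle$ over the support sit at two fixed antipodal vertices for all $\alpha\in C$. That is exactly what Corollary \ref{integer} (Theorem A.1(C) of \cite{McM}) gives for the maximum ($u^d$ is the unique maximizer), and the symmetry $\Theta_F=\pm h\sum_g a_g g^{-1}$ of Theorem \ref{reverse} then forces $hu^{-d}$ to be the unique minimizer; this yields $\|\alpha\|_{\Theta_F}=\langle\alpha,2du-h\rangle$ on $\overline C$, hence $\overline C\subseteq NC(v)$ with $v=2du-h$, after which your step (ii) closes the argument. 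The Alexander-polynomial detour for the lower bound is then unnecessary (and the divisibility $\Delta_N\mid\Theta_F$ is itself delicate, holding only up to cyclotomic factors and under orientability hypotheses on the invariant lamination).
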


By the formula in Theorem \ref{formula}, we can see that $\Theta_F(u,t)$ has a leading term $u^d$ with coefficient $1$, i.e. $\Theta_F(u,t)=u^d+b_1(t)u^{d-1}+\cdots+b_d(t)$. Then Theorem A.1 (C) of \cite{McM} implies the following immediate corollary:

\begin{col}\label{integer}
For any $\alpha\in C$, $\langle \alpha,d\cdot u\rangle >\langle \alpha,g\rangle$ for any $g$ other than $u^d$ appearing in $\Theta_F$, thus $\sum_g a_g\cdot X^{\langle \alpha,g \rangle}$ has a unique leading term $X^{\langle \alpha,d\cdot u \rangle}$ with coefficient $1$.
\end{col}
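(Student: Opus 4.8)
The plan is to peel off the part that is already in hand and concentrate on the one substantive inequality. From the formula of Theorem~\ref{formula}, $\Theta_F(u,t)=u^{d}+b_1(t)u^{d-1}+\dots+b_d(t)$ with $b_i\in\mathbb{Z}[T]$; in particular the monomial $d\cdot u$ occurs in $\Theta_F$ with coefficient $+1$ and is the only monomial of $u$-degree $d$. Thus the corollary will follow once we show that $\langle\alpha,d\cdot u\rangle>\langle\alpha,g\rangle$ for every $\alpha\in C$ and every other monomial $g$ appearing in $\Theta_F$: that inequality makes $X^{\langle\alpha,d\cdot u\rangle}$ the unique top power in $\sum_g a_g X^{\langle\alpha,g\rangle}$, with coefficient $1$.

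First I would check the inequality at the distinguished class $[S]\in C$. Write a monomial of $\Theta_F$ as $g=u^{k}\,t_g$ with $t_g\in T$; then $k\le d$, and $k<d$ unless $g=d\cdot u$. Since $\langle[S],u\rangle=1$ and $\langle[S],t\rangle=0$ for every $t\in T$, this gives $\langle[S],g\rangle=k<d=\langle[S],d\cdot u\rangle$. Consequently $d\cdot u$ is an exposed vertex of the Newton polytope $N(\Theta_F)\subset H_1(M;\mathbb{R})/Tor$, and $[S]$ lies in the interior of its normal cone, an open convex cone in $H^1(M;\mathbb{R})$ (open because there are only finitely many competing vertices and each gives a strict inequality). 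It remains to know that this open cone contains all of the fibered cone $C$, not merely the ray through $[S]$; this is precisely the content of Theorem~A.1(C) of \cite{McM}, which pins the face of $N(\Theta_F)$ dual to $C$ down to the single vertex $d\cdot u$. Given that, for each $\alpha\in C$ the functional $\langle\alpha,\cdot\rangle$ attains its maximum over $N(\Theta_F)$, hence over the support of $\Theta_F$, uniquely at $d\cdot u$, which is exactly the desired inequality.

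The only step carrying real weight is this last one, and the crux is \emph{strictness throughout the open cone} $C$ rather than merely at $[S]$ or at a generic class: a priori the normal fan of $N(\Theta_F)$ could be strictly finer than the chamber structure of the Thurston norm, in which case $C$ would meet several normal cones and some monomial could tie with $d\cdot u$ along part of $C$. Theorem~A.1(C) of \cite{McM} rules this out, and that is why the corollary is advertised as immediate. If one preferred not to quote it, the same fact can be recovered by analysing the matrices $P_E(t),P_V(t)$ coming from the invariant train track directly, using that the collapsed transition map has a simple Perron--Frobenius leading eigenvalue (Remark~\ref{PF}) for every $\alpha\in C$.
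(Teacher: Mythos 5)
Your proof is correct and rests on exactly the same pivot as the paper: the observation that $\Theta_F(u,t)=u^d+b_1(t)u^{d-1}+\cdots+b_d(t)$ has monic leading $u$-term, plus the citation of Theorem A.1(C) of \cite{McM} to get the strict inequality $\langle\alpha,d\cdot u\rangle>\langle\alpha,g\rangle$ on all of $C$ rather than just at $[S]$. The paper offers no further argument beyond that citation, so your additional scaffolding (the check at $[S]$ and the normal-cone framing) is a correct elaboration of, not a departure from, the paper's route.
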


\section{Properties of Invariant $A(S,\phi)$}\label{property}

Given a hyperbolic surface bundle $N=M(S,\phi)$, let $C$ be the fibered cone containing the Poincare dual of $[S]$, and let the corresponding fibered face be $F$. Let $m_F$ be the minimal point of the restriction of function
$\lambda(\cdot)$ on the fibered face $F$. After choosing a basis $\{\alpha_i\}_{i=1}^b$ of $H^1(N;\mathbb{Z})$, we have $m_F=\sum_{i=1}^b r_i\alpha_i$. Then we define our invariant to be: $$A_C=A(S,\phi)=\{\sum_{i=1}^b q_i r_i\ |\ q_i\in \mathbb{Q}\}.$$
Since there exists $x\in H_1(N;\mathbb{Z})/Tor$ {\it dual} to fibered face $F$, i.e. $\| \alpha \|=\langle \alpha,x \rangle$ for any $\alpha \in C$, and $1=\|m_F\|=\langle m_F,x\rangle$, $\mathbb{Q} \subset A(S,\phi)$ always holds.

Before giving examples with $A(S,\phi)\ne \mathbb{Q}$, let us first investigate a few nice properties of $A(S,\phi)$ in this section.

\subsection{Covering Property of $A(S,\phi)$}\label{coversec}

Let $p: \tilde{N}\rightarrow N$ be a finite cover, then $p^*([S])$ gives a surface bundle structure on $\tilde{N}$. Let the
fibered cone containing $[S]$ and $p^*([S])$ be $C$ and $C'$, and the corresponding fibered face be $F$ and $F'$ respectively.
Let $S'$ be one component of $p^{-1}(S)$, and $\phi'$ be the corresponding monodromy. Then we have the following proposition:

\begin{prop}\label{inv}
$A(S,\phi)=A(S',\phi')$
\end{prop}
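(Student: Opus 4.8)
The plan is to show that the two data entering the definition of $A(S,\phi)$ — the Thurston norm and the dilatation function — both pull back in a controlled way under a finite cover $p\colon \tilde N \to N$, so that the minimal point $m_{F'}$ of the dilatation function on $F'$ is precisely $p^*(m_F)$, rescaled to lie on the fibered face. First I would recall the behavior of the Thurston norm under finite covers: Gabai's result that the Thurston norm is multiplicative under finite covers gives $\|p^*(\alpha)\| = n\,\|\alpha\|$ for all $\alpha\in H^1(N;\mathbb{R})$, where $n$ is the degree of $p$; in particular $p^*$ maps the fibered cone $C$ into the fibered cone $C'$, and maps $F$ into $F'$ after the appropriate rescaling. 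Next I would observe that the dilatation is genuinely a homeomorphism invariant of the monodromy and does not change under passing to a cover in the fiber direction in the naive sense — more precisely, for an integral class $\alpha\in C$ with monodromy $\phi_\alpha$ on fiber $S_\alpha$, the class $p^*(\alpha)\in C'$ has as its fiber a cover $S'_\alpha$ of $S_\alpha$ and as its monodromy a lift of $\phi_\alpha$, which is again pseudo-Anosov with the \emph{same} dilatation $\lambda(\alpha)$ (the lift of the invariant foliations and transverse measures have the same stretch factor). Passing to Fried's continuous extension, this yields $\lambda_{C'}(p^*(\alpha)) = \lambda_C(\alpha)$ for all $\alpha\in C$, first on the rational classes and then everywhere by continuity and density.

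With these two facts in hand, the core step is the following. By Matsumoto's strict concavity of $1/\log\lambda_C$ along rays, the function $\lambda_C$ restricted to $F$ has a unique minimum $m_F$, and likewise $\lambda_{C'}$ restricted to $F'$ has a unique minimum $m_{F'}$. Now $p^*$ carries the ray $\mathbb{R}_+\cdot m_F$ into $C'$; let $\beta\in F'$ be the point on that ray lying on the fibered face $F'$, so $\beta = c\cdot p^*(m_F)$ for some $c>0$ determined by $\|p^*(m_F)\| = 1$, i.e. $c = 1/(n\|m_F\|) = 1/n$ since $\|m_F\|=1$. Along the ray through $m_F$ the function $\lambda_C$ has its minimum at $m_F$, hence along the ray $p^*(\mathbb{R}_+\cdot m_F)$ the function $\lambda_{C'}$ has its minimum at $p^*(m_F)$ and therefore, restricted to the face $F'$, cannot do better at $\beta$ than the general minimum $m_{F'}$ does. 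To upgrade this to $\beta = m_{F'}$ I would argue by the defining variational property of the minimum together with the fact that $p^*$ is a linear injection whose image meets $F'$ in a sub-face (or all of $F'$): a competitor for $m_{F'}$ inside the image $p^*(F)$ would, pulling back, produce a competitor for $m_F$, so $m_{F'}$ must lie in $p^*(\text{affine span of }F)$; combined with the previous sentence and uniqueness, $m_{F'} = \beta = \tfrac1n p^*(m_F)$. Here I am using that the minimal point of a strictly convex-type function restricted to an affine subspace is characterized intrinsically, so it is insensitive to which ambient face one views it in.

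Finally I would translate the identity $m_{F'} = \tfrac1n p^*(m_F)$ into the equality of $\mathbb{Q}$-modules. Choose a basis $\{\alpha_i\}$ of $H^1(N;\mathbb{Z})$ with $m_F = \sum r_i\alpha_i$; then $p^*(m_F) = \sum r_i\, p^*(\alpha_i)$, and since the $p^*(\alpha_i)$ are integral classes in $H^1(\tilde N;\mathbb{Z})$ (a sublattice of finite index), expressing them in any integral basis $\{\alpha'_j\}$ of $H^1(\tilde N;\mathbb{Z})$ gives $m_{F'} = \sum_j (\tfrac1n\sum_i r_i M_{ji})\alpha'_j$ for an integer matrix $(M_{ji})$. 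The $\mathbb{Q}$-module generated by the coordinates of $m_{F'}$ is therefore contained in the $\mathbb{Q}$-module generated by the $r_i$, i.e. $A(S',\phi')\subseteq A(S,\phi)$. Conversely, since $p^*$ has finite index image, each $\alpha_i$ is a $\mathbb{Q}$-linear combination of the $p^*(\alpha_i)$, hence of the $\alpha'_j$, which gives $r_i \in A(S',\phi')$ and the reverse inclusion $A(S,\phi)\subseteq A(S',\phi')$; hence they are equal. I expect the main obstacle to be the middle step — pinning down that $m_{F'}$ genuinely lies in the image of $p^*$ rather than merely that $p^*(m_F)$ is a critical point along one ray — since a priori the fibered face $F'$ could be strictly larger than the image of $F$, and one must use the precise variational characterization of the minimum (via Fried/Matsumoto concavity together with the functorial identity $\lambda_{C'}\circ p^* = \lambda_C$) to rule out a better minimizer off the image. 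The Thurston-norm multiplicativity and the pull-back behavior of dilatation are standard and should go quickly.
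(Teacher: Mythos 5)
Your first and last steps are fine and agree with the paper: Gabai's multiplicativity of the Thurston norm gives $\frac{1}{\deg p}\,p^*(F)\subset F'$, the identity $\lambda_{C'}(p^*(\alpha))=\lambda_C(\alpha)$ holds on $C$, and once one knows $m_{F'}=\frac{1}{\deg p}\,p^*(m_F)$ the equality of $\mathbb{Q}$-modules follows because $p^*$ is an injective integer matrix (full column rank, hence with a rational left inverse). The problem is the middle step, which you yourself flag as the main obstacle but do not actually close. Your argument that ``a competitor for $m_{F'}$ inside the image $p^*(F)$ would, pulling back, produce a competitor for $m_F$'' only shows that $\frac{1}{\deg p}\,p^*(m_F)$ minimizes $\lambda_{C'}$ over the subset $\frac{1}{\deg p}\,p^*(F)\subset F'$; it says nothing about points of $F'$ outside the image of $p^*$, and in general $b_1(\tilde N)>b_1(N)$, so $F'$ is genuinely larger. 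Concavity of $1/\log\lambda_{C'}$ (equivalently, convexity of $\log\lambda_{C'}$ on $F'$) does not force the global minimizer of a convex function on $F'$ to lie in a proper affine slice just because the restricted minimizer on that slice is known; the derivative of $\lambda_{C'}$ at $\frac{1}{\deg p}\,p^*(m_F)$ vanishes only in directions tangent to $p^*(F)$, and the tools you cite ($\lambda_{C'}\circ p^*=\lambda_C$ plus Fried--Matsumoto) give no control in the transverse directions. Also, the remark that ``along the ray through $m_F$ the function $\lambda_C$ has its minimum at $m_F$'' is not meaningful: along rays from the origin $\lambda$ is strictly monotone, so the relevant minimization is over the face, not over a ray.

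The missing ingredient is symmetry. For a \emph{regular} cover with deck group $H$, the function $\lambda_{C'}$ is $H$-invariant and the minimum on $F'$ is unique, so $m_{F'}$ must be an $H$-fixed point; one then identifies $(F')^H$ with $\frac{1}{\deg p}\,p^*(F)$ (using $p^*(H^1(N;\mathbb{R}))=(H^1(\tilde N;\mathbb{R}))^H$, Gabai's theorem, and the class $x'$ dual to $C'$), which pins $m_{F'}$ into the image where your restricted-minimization argument does apply. For a general finite cover one passes to a further cover $\tilde{\tilde N}\to\tilde N$ so that the composite $\tilde{\tilde N}\to N$ is regular, applies the regular case there, and compares the three minimal points to force $m_{F'}=\frac{1}{\deg p}\,p^*(m_F)$: if some $\gamma\in F'$ had $\lambda(\gamma)<\lambda(\frac{1}{\deg p}\,p^*(m_F))$, its pullback to the top cover would beat the known minimum there. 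Without this (or some substitute argument pinning $m_{F'}$ into the image of $p^*$), the proof is incomplete.
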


We begin with showing the proposition for regular cover:
\begin{lem}\label{cover}
Suppose $p: \tilde{N}\rightarrow N$ is a regular cover, then $A(S,\phi)=A(S',\phi')$.
\end{lem}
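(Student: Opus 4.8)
The plan is to compare the two dilatation functions $\lambda_C(\cdot)$ on $H^1(N;\mathbb{R})$ and $\lambda_{C'}(\cdot)$ on $H^1(\tilde N;\mathbb{R})$ via the pullback $p^*\colon H^1(N;\mathbb{R})\to H^1(\tilde N;\mathbb{R})$, and to show that $p^*$ carries $m_F$ to a positive multiple of $m_{F'}$. Once this is established, the $\mathbb{Q}$-submodule generated by the coordinates of $m_{F'}$ coincides with that generated by the coordinates of $p^*(m_F)$ (scaling by a rational, which it will be since both points lie on faces dual to integer classes), and the latter equals $A(S,\phi)$ because $p^*$ has image a finite-index sublattice of $H^1(\tilde N;\mathbb{Z})$ — so writing $p^*(m_F)$ in a basis of $H^1(\tilde N;\mathbb{Z})$ produces the same $\mathbb{Q}$-span of coordinates as writing $m_F$ in a basis of $H^1(N;\mathbb{Z})$. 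So the lemma reduces to the single geometric claim $p^*(m_F)\in \mathbb{R}_{>0}\cdot m_{F'}$.

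First I would record the behavior of dilatation under finite covers: for an integral class $\alpha\in C\cap H^1(N;\mathbb{Z})$ corresponding to a fiber $T$ with monodromy $\psi$, the pullback $p^*\alpha$ corresponds to a fiber $p^{-1}(T)$ whose monodromy is a power/disjoint-union model of $\psi$ lifted to the cover; crucially the stable foliation and its transverse measure lift, so the dilatation is unchanged: $\lambda_{C'}(p^*\alpha)=\lambda_C(\alpha)$ for integral $\alpha$, hence $\lambda_{C'}\circ p^* = \lambda_C$ on all of $C$ by Fried's continuity (Theorem of Fried, cited in the introduction). Here one uses that the cover is regular so that $p^{-1}(S)$ is a disjoint union of copies of $S'$ permuted by the deck group and the monodromy is genuinely the lift of $\phi$; this is exactly where regularity enters and why the general case is deferred to Proposition \ref{inv}. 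I would also note that $p^*$ maps the fibered cone $C$ into the fibered cone $C'$ (this is part of the setup preceding the lemma).

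Next, combine this with the Thurston norm relation. The fibered face $F$ is dual to $x\in H_1(N;\mathbb{Z})/\mathrm{Tor}$ and $F'$ to $x'\in H_1(\tilde N;\mathbb{Z})/\mathrm{Tor}$; for a regular cover of degree $n$ one has $\|p^*\alpha\|_{\tilde N} = n\,\langle\alpha,x\rangle$... more precisely $\langle p^*\alpha, x'\rangle = \langle \alpha, p_*x'\rangle$ and $p_*x' = x$ (up to the normalization making these the Thurston-norm-dual classes), so $\|p^*\alpha\|_{\tilde N}$ is a fixed positive multiple $c$ of $\|\alpha\|_N$ for all $\alpha\in C$. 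Therefore $p^*$ maps the affine slice $\{\alpha\in C : \|\alpha\|_N=1\} = F$ to the affine slice $\{\beta\in C':\|\beta\|_{\tilde N}=1\}=F'$ after rescaling by $1/c$, and under this identification $\lambda_{C'}$ pulls back to $\lambda_C$ (using $\lambda_C(r\alpha)=\lambda_C(\alpha)^r$ and the norm-scaling). Since $m_F$ is the unique minimizer of $\lambda_C|_F$ (Matsumoto's strict concavity of $1/\log\lambda$, cited above) and $m_{F'}$ the unique minimizer of $\lambda_{C'}|_{F'}$, the homeomorphism $F\to F'$ intertwining the two functions must send minimizer to minimizer: $p^*(m_F) = c\cdot m_{F'}$ for the positive rational $c$.

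The main obstacle I anticipate is pinning down the precise relation between the Thurston-norm-dual classes $x$ and $x'$ and the degree factor — i.e., verifying $\|p^*\alpha\|_{\tilde N}$ is a constant multiple of $\|\alpha\|_N$ on the fibered cone, which needs the fact that the Thurston norm of a fibered class equals $|\chi|$ of the fiber and that $p^{-1}(S')$-type fibers have Euler characteristic multiplied by the covering degree, together with care about whether $p^{-1}(S)$ is connected. The dilatation-is-preserved step is essentially formal once regularity is assumed, and the minimizer-transport step is immediate from uniqueness; so the bookkeeping with norms and the rationality of the scaling constant $c$ is where the real care is required.
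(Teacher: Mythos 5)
Your reduction to the claim $p^*(m_F)\in\mathbb{R}_{>0}\cdot m_{F'}$, the preservation of dilatation under pullback, and the norm-scaling $\|p^*\alpha\|=\deg(p)\,\|\alpha\|$ (Gabai) are all correct and match the paper. But there is a genuine gap at the ``minimizer transport'' step: you treat $\tfrac{1}{\deg p}\,p^*$ as a homeomorphism $F\to F'$, which presupposes that $p^*\colon H^1(N;\mathbb{R})\to H^1(\tilde N;\mathbb{R})$ is surjective. For a finite cover of a hyperbolic $3$-manifold one typically has $b_1(\tilde N)>b_1(N)$, so $\tfrac{1}{\deg p}p^*(F)$ is in general a \emph{proper} affine sub-slice of $F'$ of strictly lower dimension. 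Your argument then only shows that $\tfrac{1}{\deg p}p^*(m_F)$ minimizes $\lambda_{C'}$ over that sub-slice; it does not rule out that the global minimizer $m_{F'}$ of $\lambda_{C'}|_{F'}$ lies off the image of $p^*$, in which case its coordinates would have nothing to do with those of $m_F$.

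This is exactly the point where the paper uses regularity in an essential way that your proposal does not: letting $H$ be the deck group, $\lambda_{C'}$ is $H$-invariant, and since the minimizer on $F'$ is unique (strict concavity of $1/\log\lambda$), $m_{F'}$ must lie in the fixed locus $(F')^H$. One then proves the identification $(F')^H=\tfrac{1}{\deg p}\,p^*(F)$ — this needs its own short argument comparing the Thurston-norm-dual classes $x$ and $\tfrac{1}{\deg p}p_*(x')$, since a priori one only gets the inclusion $\tfrac{1}{\deg p}p^*(F)\subset (F')^H$. Only after both steps does the restriction argument you give apply. (Your use of regularity — that $p^{-1}(S)$ consists of copies of $S'$ permuted by the deck group so that $\lambda$ is preserved — is not where regularity is really needed; dilatation is preserved under arbitrary finite covers, which is why the paper can bootstrap to the irregular case in Proposition \ref{inv}. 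The symmetry of $\lambda_{C'}$ under the deck action is the crucial consequence of regularity.) The final bookkeeping step, that an integral $p^*$ and a rational scaling preserve the $\mathbb{Q}$-span of coordinates, is fine as you state it.
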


\begin{proof}

Let $H$ be the deck transformation group of regular cover $p: \tilde{N}\rightarrow N$. Then $p^*(H^1(N;\mathbb{R}))$ is the fixed point
set of the $H$ action, i.e. $p^*(H^1(N;\mathbb{R}))=(H^1(\tilde{N};\mathbb{R}))^H$.

In \cite{Ga}, Gabai showed that for any $\alpha \in H^1(N;\mathbb{R})$, $\|p^*(\alpha) \| =\deg{p} \cdot \| \alpha \|$. So $\frac{1}{\deg{p}}p^*(F)\subset F'$. Actually $\frac{1}{\deg{p}}p^*(F)\subset (F')^H$ (the fixed point set of $H$ action on $F'$), since $p^*(H^1(N;\mathbb{R}))=(H^1(\tilde{N};\mathbb{R}))^H$.

Claim. $\frac{1}{\deg{p}}p^*(F)=(F')^H$.

Let $x'\in H_1(\tilde{N};\mathbb{Z})/Tor$ be the homology class dual with $C'$, i.e. $\| \alpha' \|=\langle \alpha',x'\rangle$ for any $\alpha' \in C'$. Then $\frac{1}{\deg{p}}p_*(x')$ is dual to $C$, since
$\| \alpha \|=\frac{1}{\deg{p}}\| p^*(\alpha) \|=\frac{\langle p^*(\alpha),x'\rangle}{\deg{p}}=\frac{\langle \alpha,p_*(x')\rangle}{\deg{p}}$ for any $\alpha \in C$. For any $\beta' \in (F')^H$, let
$\beta'=p^*(\beta)$. Then $\| \beta \|=\frac{1}{\deg{p}}\| p^*(\beta) \|=\frac{1}{\deg{p}}\langle p^*(\beta),x'\rangle =\langle \beta,\frac{1}{\deg{p}}p_*(x')\rangle$. So
$\beta \in C$, thus $\beta' \in p^*(C)$. Since $\|\beta'\|=1$, $\beta' \in \frac{1}{\deg{p}}p^*(F)$ holds immediately.

Since the dilatation function $\lambda(\cdot)$ is invariant under $H$ action, i.e. $\lambda(\alpha')=\lambda(h^*(\alpha'))$ for $\alpha' \in C'$, the minimal point $m_F' \in (F')^H$. So it suffices to find the minimal point of the restriction of $\lambda(\cdot)$ on $(F')^H=\frac{1}{\deg{p}}p^*(F)$.

On the other hand, since $\lambda(p^*(\alpha))=\lambda(\alpha)$ holds for integer class $\alpha\in C$, this equality holds for any $\alpha \in C$. So $\frac{1}{\deg{p}}p^*(m_F)$ is the minimal point of the restriction of $\lambda(\cdot)$ on $\frac{1}{\deg{p}}p^*(F)$, i.e.
$\frac{1}{\deg{p}}p^*(m_F)=m_F'$.

Since $p^*$ is represented by an integer matrix under integer basis of $H^1(N;\mathbb{Z})$ and $H^1(\tilde{N};\mathbb{Z})$, the coordinates of
$m_F$ and $m_F'$ give the same $\mathbb{Q}$-module, i.e. $A(S,\phi)=A(S',\phi')$.
\end{proof}

{\it Proof of Proposition \ref{inv}: } We can take a further finite cover $p':\tilde{\tilde{N}}\rightarrow \tilde {N}$ such that $p'':\tilde{\tilde{N}} \rightarrow N$ is a regular cover. Let $C''$, $F''$ and $m_F''$ be the corresponding fibered cone, fibered face of $N''$ and minimal point on $F''$.

By Lemma \ref{cover}, we have $m_F''=\frac{1}{\deg{p''}}p''^*(m_F)$. Since
$$m_F''=\frac{1}{\deg{p''}}p''^*(m_F)=\frac{1}{\deg{p'}}p'^*(\frac{1}{\deg{p}}p^*(m_F))$$ and $\frac{1}{\deg{p'}}p'^*(F')\subset F''$,
$\frac{1}{\deg{p}}p^*(m_F)$ is the minimal point
on the fibered face $F'$, i.e. $m_F'=\frac{1}{\deg{p}}p^*(m_F)$. So $A(S,\phi)=A(S',\phi')$. \hfill $\qed$

Comparing with definitions of commensurability in \cite{CSW}, we give the following definition:
\begin{defn}
Two maps $(S_1,\phi_1)$ and $(S_2,\phi_2)$ are said to be {\it fibered cone commensurable} if there is another manifold $M$, with finite covers $p_i:M\rightarrow M(S_i,\phi_i),\ i=1,2$, such that $p_1^*([S_1])$ and $p_2^*([S_2])$ lie in the same fibered cone of $H^1(M;\mathbb{R})$.
\end{defn}

We have the following immediate Corollary of Proposition \ref{inv}, which rephrases Proposition \ref{nice}.
\begin{col}\label{inv2}
If two pseudo-Anosov maps $(S_1,\phi_1)$ and $(S_2,\phi_2)$ are fibered cone commensurable, then $A(S_1,\phi_1)=A(S_2,\phi_2)$.
\end{col}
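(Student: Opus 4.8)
The plan is to deduce Corollary \ref{inv2} directly from Proposition \ref{inv} by a short chain of equalities, so essentially no new mathematics is needed beyond unpacking the definitions. Suppose $(S_1,\phi_1)$ and $(S_2,\phi_2)$ are fibered cone commensurable, witnessed by a manifold $M$ with finite covers $p_i\colon M\rightarrow M(S_i,\phi_i)$ such that $p_1^*([S_1])$ and $p_2^*([S_2])$ lie in the same fibered cone $C$ of $H^1(M;\mathbb{R})$. For each $i$, let $S_i'$ be a component of $p_i^{-1}(S_i)$ with monodromy $\phi_i'$; then $[S_i']$ (up to the scaling by the index of the component, which does not affect the cone or the $\mathbb{Q}$-module $A$) is a positive multiple of $p_i^*([S_i])$, so $[S_1']$ and $[S_2']$ both lie in the fibered cone $C$ of $M$.

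The key step is then: two fiber surfaces in the \emph{same} fibered cone of a fixed manifold have the same invariant. This is immediate from the definition of $A_C$, since $m_F$ and hence the $\mathbb{Q}$-module it generates depend only on the fibered face $F$, not on which fiber surface in $C$ one picks; indeed the text already remarks that $A(S,\phi)=A_C$ is an invariant of the cone. Hence $A(S_1',\phi_1')=A_C=A(S_2',\phi_2')$, where $C$ is the common fibered cone in $H^1(M;\mathbb{R})$. Finally, applying Proposition \ref{inv} to each of the finite covers $p_i\colon M\rightarrow M(S_i,\phi_i)$ gives $A(S_i,\phi_i)=A(S_i',\phi_i')$ for $i=1,2$. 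Chaining these together,
\[
A(S_1,\phi_1)=A(S_1',\phi_1')=A_C=A(S_2',\phi_2')=A(S_2,\phi_2),
\]
which is the assertion.

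I do not anticipate a genuine obstacle here; the only point requiring a little care is the bookkeeping around components of $p_i^{-1}(S_i)$ and the scaling factors. One must check that if $S_i'$ is a connected component of $p_i^{-1}(S_i)$ then its Poincaré dual is a positive rational multiple of $p_i^*([S_i])$ (this is standard: $p_i^*([S_i])$ is dual to the full preimage, which is a disjoint union of translates of $S_i'$ under the deck group when the cover is regular, and in general one passes to a further regular cover exactly as in the proof of Proposition \ref{inv}), and that $A$ is insensitive to positive scaling of the cohomology class defining the cone — which is clear since $m_F$ is the minimal point of $\lambda(\cdot)$ on the \emph{face} $F=C\cap\{\|\cdot\|=1\}$ and does not move under rescaling the generator. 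With that observation in place the corollary is a formal consequence of Proposition \ref{inv}.
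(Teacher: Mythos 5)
Your argument is correct and matches the paper's (implicit) reasoning: the paper states this as an immediate corollary of Proposition \ref{inv}, and your chain $A(S_i,\phi_i)=A(S_i',\phi_i')=A_C$ is exactly the intended unpacking, including the observation that $[S_i']$ is a positive multiple of $p_i^*([S_i])$ and that $A_C$ depends only on the fibered cone.
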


\subsection{Symmetry Implies Rationality}\label{specialsec}

Using the symmetry from group action, we can deduce $A(S,\phi)=\mathbb{Q}$ in a few simple cases.

\begin{prop}\label{symmetry}
Suppose $\phi$ is a pseudo-Anosov map on surface $S$, and $\phi$ commutes with an involution $\tau$ with $\tau_*=-id$ on $H_1(S;\mathbb{Z})$.
Then $A(S,\phi)=\mathbb{Q}$.
\end{prop}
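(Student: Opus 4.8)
The key is to show that the minimal point $m_F$ is fixed by the involution $\tau$ acting on cohomology, and then to deduce from $\tau_* = -\mathrm{id}$ that this forces $m_F$ to be a rational class. I would first set $N = M(S,\phi)$ and consider the mapping torus $N' = M(S, \tau\circ\phi)$ — or better, exploit that $\tau$ extends to a self-homeomorphism of $N$. Since $\phi$ commutes with $\tau$, the product $\tau \times \mathrm{id}_I$ on $S\times I$ descends to a homeomorphism $\hat\tau\colon N\to N$ (it respects the gluing $(x,0)\sim(\phi(x),1)$ because $\tau\phi = \phi\tau$). This $\hat\tau$ is an involution of $N$, it preserves the fiber $S$, hence preserves $[S]\in H^1(N;\mathbb Z)$ and therefore preserves the fibered cone $C$ and the fibered face $F$. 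Consequently $\hat\tau^*$ permutes the data defining the dilatation function: $\lambda(\hat\tau^*\alpha) = \lambda(\alpha)$ for all $\alpha \in C$ (the monodromies of $\alpha$ and $\hat\tau^*\alpha$ are conjugate). By the uniqueness of the minimizer (Matsumoto's strict concavity, quoted in the introduction), $m_F$ must be fixed: $\hat\tau^*(m_F) = m_F$.

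The second step is to compute $\hat\tau^*$ on $H^1(N;\mathbb R)$ explicitly enough to see what "fixed" means. Using the Wang exact sequence for the fibration $S \hookrightarrow N \to S^1$, one has a short exact sequence $0 \to \mathrm{coker}(\phi^* - \mathrm{id} \text{ on } H^1(S)) \to H^1(N) \to \ker(\phi^*-\mathrm{id} \text{ on } H^2(S)) \to 0$, or dually on $H_1$. Writing $H^1(N;\mathbb R) \cong \mathbb R\langle u^*\rangle \oplus V$ where $u^*$ is dual to the circle direction (pulled back from $H^1(S^1)$) and $V$ is the part coming from $H^1(S)$ — more precisely the coinvariants $H^1(S;\mathbb R)_{\phi^*}$ — the key point is that $\hat\tau^*$ acts as $+1$ on the $\langle u^*\rangle$ summand (since $\hat\tau$ preserves the orientation of the base circle) and acts as $\tau^* = -\mathrm{id}$ on the part descending from $H^1(S;\mathbb R)$. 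So the fixed subspace of $\hat\tau^*$ is exactly the line $\mathbb R\langle u^*\rangle$ (intersected appropriately with the relevant subquotients). But $u^*$ is a rational class, indeed an integral one, so $m_F$ being a nonzero multiple of it forces all its coordinates $r_i$ in an integral basis to be rational multiples of each other. Hence $A(S,\phi) = \{\sum q_i r_i : q_i\in\mathbb Q\} = \mathbb Q$, using as always that $\langle m_F, x\rangle = 1$ pins down the scale.

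The main obstacle will be the bookkeeping in the second step: the Wang sequence involves coinvariants and invariants rather than the full $H^1(S)$, and $\tau^* = -\mathrm{id}$ on $H_1(S;\mathbb Z)$ needs to be pushed through $\phi^*-\mathrm{id}$ to conclude that $\hat\tau^*$ really has no fixed vectors beyond the circle class. There is a subtlety if $H^1(N;\mathbb R)$ has a nontrivial "$\ker$" piece coming from $H^2(S)$-invariants: for surfaces with boundary one must be careful whether $H^2(S) = 0$ (so the Wang sequence simplifies) or not. I expect the cleanest route is to argue directly on $H_1(N;\mathbb Z)/\mathrm{Tor}$: the natural map $H_1(S;\mathbb Z) \to H_1(N;\mathbb Z)$ has image the coinvariants, $\hat\tau_*$ acts on this image as $-\mathrm{id}$ (modulo torsion), and the quotient $H_1(N)/\mathrm{im}(H_1(S)) \cong \mathbb Z$ carries the class of the suspension flow's section direction on which $\hat\tau_*$ is $+1$. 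A vector fixed by $\hat\tau_*$ must then project to the $\mathbb Z$ quotient with its $H_1(S)$-part killed by $1-(-1)=2$, i.e. it is $2$-torsion in the coinvariant part, hence zero in $H_1(N;\mathbb Q)$ up to the circle direction. Translating back through Poincaré duality gives the same conclusion for $\hat\tau^*$ on $H^1$, and the proposition follows.
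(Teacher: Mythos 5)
Your proposal is correct and follows essentially the same route as the paper: extend $\tau$ to an involution $\bar\tau$ of $N=M(S,\phi)$, note that uniqueness of the minimizer forces $\bar\tau^*(m_F)=m_F$, and observe that the fixed subspace of $\bar\tau^*$ on $H^1(N;\mathbb{R})$ is the rational line spanned by the class pulled back from the base circle (the paper phrases this by choosing a basis $(t_1,\dots,t_k,u)$ of $H_1(N;\mathbb{Z})/\mathrm{Tor}$ with $\bar\tau_*(t_i)=-t_i$, $\bar\tau_*(u)=u$, so that $m_F=[S]/\|[S]\|$). Your extra care with the Wang sequence and coinvariants just makes explicit the bookkeeping the paper compresses into that choice of basis.
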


\begin{proof}	
On $3$-manifold $N=M(S,\phi)=S\times I/(x,0) \sim (\phi(x),1)$, we can define involution $\bar{\tau}$ on $N$ by $\bar{\tau}(x,t)=(\tau(x),t)$. $\bar{\tau}$ is
well-define since $\phi$ commutes with $\tau$.

Let $\pi:N\rightarrow S^1$ gives the surface bundle structure of $M(S,\phi)$ with fiber $S$. Let $(t_1,t_2,\cdots,t_k,u)$ be a basis of
$H_1(N;\mathbb{Z})/Tor$, such that $\pi_*(t_i)=0$. Then $\bar{\tau}_*(t_i)=-t_i$, choose $u$ such that $\bar{\tau}_*(u)=u$.
Let $(\alpha_1,\cdots,\alpha_k,[S])$ be the
dual basis in $H^1(N;\mathbb{Z})$, then $\bar{\tau}^*(\alpha_i)=-\alpha_i$, while $\bar{\tau}^*([S])=[S]$.

Let $C$ be the fibered cone containing $[S]$ and $F$ be the corresponding fibered face. Since $\frac{[S]}{\|[S]\|}$ is the unique fixed point
of the $\bar{\tau}^*$ action on $F$, and $\|\frac{[S]}{\|[S]\|}\|=1$, we have $m_F=\frac{[S]}{\|[S]\|}$, which is a rational class. So $A(S,\phi)=\mathbb{Q}$.
\end{proof}

Pseudo-Anosov maps which commute with the hyperelliptic involution on closed surfaces are closely related with pseudo-Anosov braids, which are specialized interesting. So we point out the following immediately corollary.
\begin{col}
For any pseudo-Anosov map $\phi$ on surface $S$ which commutes with the hyperelliptic involution $\tau$, we have $A(S,\phi)=\mathbb{Q}$.
\end{col}

\begin{col}\label{special}
For any pseudo-Anosov map $\phi$ on closed surface $S=\Sigma_{2,0}$ or $\Sigma_{1,2}$ or $\Sigma_{0,4}$, $A(S,\phi)=\mathbb{Q}$.
\end{col}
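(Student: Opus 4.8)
The plan is to obtain all three cases from Proposition~\ref{symmetry}, or rather from the mild extension of its proof in which the single involution $\bar\tau$ is replaced by a finite group $H$ acting on $N=M(S,\phi)$ by orientation-preserving homeomorphisms that preserve the fibre $S$: since the Thurston norm and the dilatation function $\lambda_C$ are $H$-invariant, so is the minimal point $m_F$, and if $(H^1(N;\mathbb{R}))^H$ equals the line $\mathbb{R}[S]$ then $m_F=[S]/\|[S]\|$ is rational and $A(S,\phi)=\mathbb{Q}$ (Proposition~\ref{symmetry} is the case $H\cong\mathbb{Z}/2$). For $S=\Sigma_{2,0}$ there is nothing to do: the hyperelliptic involution is central in $\mathrm{Mod}(\Sigma_{2,0})$ and acts by $-\mathrm{id}$ on $H_1(\Sigma_{2,0};\mathbb{Z})$, so every pseudo-Anosov $\phi$ commutes with it and the corollary immediately following Proposition~\ref{symmetry} applies. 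For $S=\Sigma_{1,2}$, with the two boundary curves allowed to be interchanged, there is again a central involution $\iota$ in $\mathrm{Mod}(\Sigma_{1,2})$ --- the hyperelliptic involution whose quotient orbifold is a disc with four cone points of order two; capping the boundary recovers the elliptic involution of the torus, so $\iota$ is $-\mathrm{id}$ on the genus part of $H_1$, and since $\iota$ interchanges the two boundary classes and $[\partial_1]+[\partial_2]=0$ we get $\iota_*=-\mathrm{id}$ on all of $H_1(\Sigma_{1,2};\mathbb{Z})$. Hence Proposition~\ref{symmetry} gives $A(\Sigma_{1,2},\phi)=\mathbb{Q}$.

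The genuine work is in $S=\Sigma_{0,4}$, where no orientation-preserving involution can be $-\mathrm{id}$ on $H_1(\Sigma_{0,4};\mathbb{Z})$ (a loop about a puncture is sent to another such loop, never to its inverse). First I would reduce to the case in which $\phi$ fixes each of the four punctures: if $b_1(N)=1$ the claim is trivial, and otherwise the permutation of the punctures induced by $\phi$ has order $n\in\{1,2,3\}$, so replacing $\phi$ by $\phi^n$ (which I keep calling $\phi$) is harmless because $M(\Sigma_{0,4},\phi^n)\to M(\Sigma_{0,4},\phi)$ is the $n$-fold cyclic cover whose fibre is again $\Sigma_{0,4}$, whence the invariant is unchanged by Proposition~\ref{inv}. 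Next, take the double cover $\Sigma_{1,4}\to\Sigma_{0,4}$ whose deck group is generated by the elliptic involution --- equivalently the branched double cover of the sphere by the torus with branch set the four marked points, with those points deleted. As $\phi$ fixes the four branch points it lifts to a pseudo-Anosov $\tilde\phi$ on $\Sigma_{1,4}$ (the invariant foliations pull back; the punctures, necessarily one-pronged, become regular), and $A(\Sigma_{0,4},\phi)=A(\Sigma_{1,4},\tilde\phi)$ by Proposition~\ref{inv}.

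On the torus $\tilde\phi$ is isotopic to a linear Anosov map $A\in\mathrm{SL}(2,\mathbb{Z})$; since $A$ fixes every $2$-torsion point it lies in the level-two congruence subgroup, hence commutes with translation by every $2$-torsion point, and these translations permute (so preserve) the four deleted points. Restricting to $\Sigma_{1,4}$, the elliptic involution together with the three nontrivial $2$-torsion translations generate a group $H\cong(\mathbb{Z}/2)^3$ of orientation-preserving, fibre-preserving symmetries of $M(\Sigma_{1,4},\tilde\phi)$. I then verify $(H^1(M;\mathbb{R}))^H=\mathbb{R}[\Sigma_{1,4}]$: in the Wang decomposition of $H^1(M;\mathbb{R})$ the rank-one ``fibre'' summand is fixed by all of $H$; the complementary rank-three summand is dual to the span of the four puncture classes modulo their sum, on which the elliptic involution acts trivially while the three translations act as the three double transpositions of the punctures, and since these translations already permute the four punctures transitively the only common fixed vector in this rank-three space is $0$. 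Therefore $(H^1(M;\mathbb{R}))^H=\mathbb{R}[\Sigma_{1,4}]$, the extended form of Proposition~\ref{symmetry} yields $A(\Sigma_{1,4},\tilde\phi)=\mathbb{Q}$, and hence $A(\Sigma_{0,4},\phi)=\mathbb{Q}$.

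The main obstacle is this $\Sigma_{0,4}$ argument: one has to recognise that no symmetry of $\Sigma_{0,4}$ itself is enough, pass to the hyperelliptic torus cover, and there use the translation symmetries --- available only because the lifted monodromy lands in the level-two congruence subgroup --- together with the observation that these translations act transitively on the four cusps and so annihilate the whole homologically essential part of $H^1$. The steps needing care are the reduction to $\phi$ fixing the punctures, the verification that $\tilde\phi$ is genuinely pseudo-Anosov, and the transitivity/fixed-vector computation on $H_1(\Sigma_{1,4})$; the remaining input is just Propositions~\ref{inv} and~\ref{symmetry}.
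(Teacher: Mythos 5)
Your treatment of $\Sigma_{2,0}$ and $\Sigma_{1,2}$ matches the paper's: after reducing (via Proposition~\ref{inv}, replacing $\phi$ by a power) to the pure mapping class group, one uses the hyperelliptic involution, which commutes with everything relevant and acts by $-\mathrm{id}$ on $H_1$, and Proposition~\ref{symmetry} finishes. For $\Sigma_{0,4}$ you and the paper agree that no single involution with $\tau_*=-\mathrm{id}$ exists, and both arguments ultimately rest on the same symmetry group; but the paper stays on $\Sigma_{0,4}$ itself, using the two ``pillowcase'' involutions $\tau_1,\tau_2$ (which commute with the whole pure mapping class group and generate a $\mathbb{Z}_2\oplus\mathbb{Z}_2$ acting transitively on the punctures, so that the fixed subspace of $H^1(N;\mathbb{R})$ is $\mathbb{R}[S]$ and $m_F$ is the unique fixed point on $F$), whereas you pass to the elliptic double cover $\Sigma_{1,4}\to\Sigma_{0,4}$ and work with the $2$-torsion translations upstairs. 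Your $\tau_1,\tau_2$ are exactly the descents of those translations, so the two proofs are the same idea run on different levels of the tower; the paper's version is shorter because it never needs the linear Anosov model, $\Gamma(2)$, or the lifting discussion.

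There is one step in your $\Sigma_{0,4}$ argument that does not hold as written. You deduce that the $2$-torsion translations commute with $\tilde\phi$ from the fact that they commute with the linear representative $A\in\Gamma(2)$ on the closed torus. But $\tilde\phi$, as a mapping class of the four-times marked torus, is the linear map composed with an element of the point-pushing kernel of $\mathrm{Mod}(T^2,4)\to\mathrm{Mod}(T^2)$ (and possibly with the elliptic involution), and a translation $t_v$ conjugates a point-push based at $p_i$ to one based at $p_i+v$; so commutation with $A$ alone does not give commutation with $\tilde\phi$, which is what you need for $H$ to act on the mapping torus. This is fixable: either invoke the standard fact (the one the paper uses directly) that the two involutions of $\Sigma_{0,4}$ commute with $\mathrm{PMod}(\Sigma_{0,4})$ and lift via Birman--Hilden, or realize everything affinely in the flat structure determined by the $\phi$-invariant quadratic differential, in which the marked points really are the $2$-torsion points and $\tilde\phi$ really is affine, so the translations commute with it on the nose. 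With that repair your argument is complete, but the detour through $\Sigma_{1,4}$ buys nothing over the paper's direct use of $\tau_1,\tau_2$ on $\Sigma_{0,4}$.
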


\begin{proof}
By Proposition \ref{inv}, $A(S,\phi)$ is invariant by taking powers of $\phi$, so we assume $\phi$ lies in the pure mapping class group,
i.e. the mapping classes send each boundary component of the surface to itself.

By \cite{FM} Section 4.4.4, the pure mapping class groups of $\Sigma_{2,0}, \Sigma_{1,2}$ are generated by Dehn twists along simple
closed curves $\gamma_1,\ \gamma_2,\ \gamma_3$ in Figure 1 (a), (b). It is also well known that the pure mapping class group of $\Sigma_{0,4}$ is generated by twists along simple closed curves $\gamma_1,\ \gamma_2$
in Figure 1 (c).

It is easy to see that the involutions $\tau$ ($\pi$ rotation) in Figure 1 (a), (b) commute with the whole pure mapping class group for $\Sigma_{2,0}$ and
$\Sigma_{1,2}$ and $\tau_*=-id$ on $H_1(S;\mathbb{R})$. So by Proposition \ref{symmetry}, $A(S,\phi)=\mathbb{Q}$.

Although $\Sigma_{0,4}$ does not admit an involution with $\tau_*=-id$, it admits two involutions $\tau_1$ and $\tau_2$ both commute with the pure
mapping class group (see Figure 1 (c)). These two involutions give a $\mathbb{Z}_2 \oplus \mathbb{Z}_2$ action on $N=M(S,\phi)$, and an action on fibered face $F$. The minimal point $m_F$ is the unique fixed point of this
$\mathbb{Z}_2\oplus \mathbb{Z}_2$ action on $F$. So $A(S,\phi)=\mathbb{Q}$.

\end{proof}

\begin{center}
\psfrag{a}[]{$\gamma_3$} \psfrag{b}[]{$\gamma_2$} \psfrag{c}[]{$\gamma_1$}
\psfrag{d}[]{$\tau$} \psfrag{e}[]{$(a)$} \psfrag{f}[]{$\gamma_2$}
\psfrag{g}[]{$\gamma_3$} \psfrag{h}[]{$\gamma_1$} \psfrag{i}[]{$\tau$}
\psfrag{j}[]{$(b)$} \psfrag{k}[]{$\gamma_1$} \psfrag{l}[]{$\gamma_2$}
\psfrag{m}[]{$\tau_1$} \psfrag{n}[]{$\tau_2$} \psfrag{o}[]{$(c)$}
\includegraphics[width=5.5in]{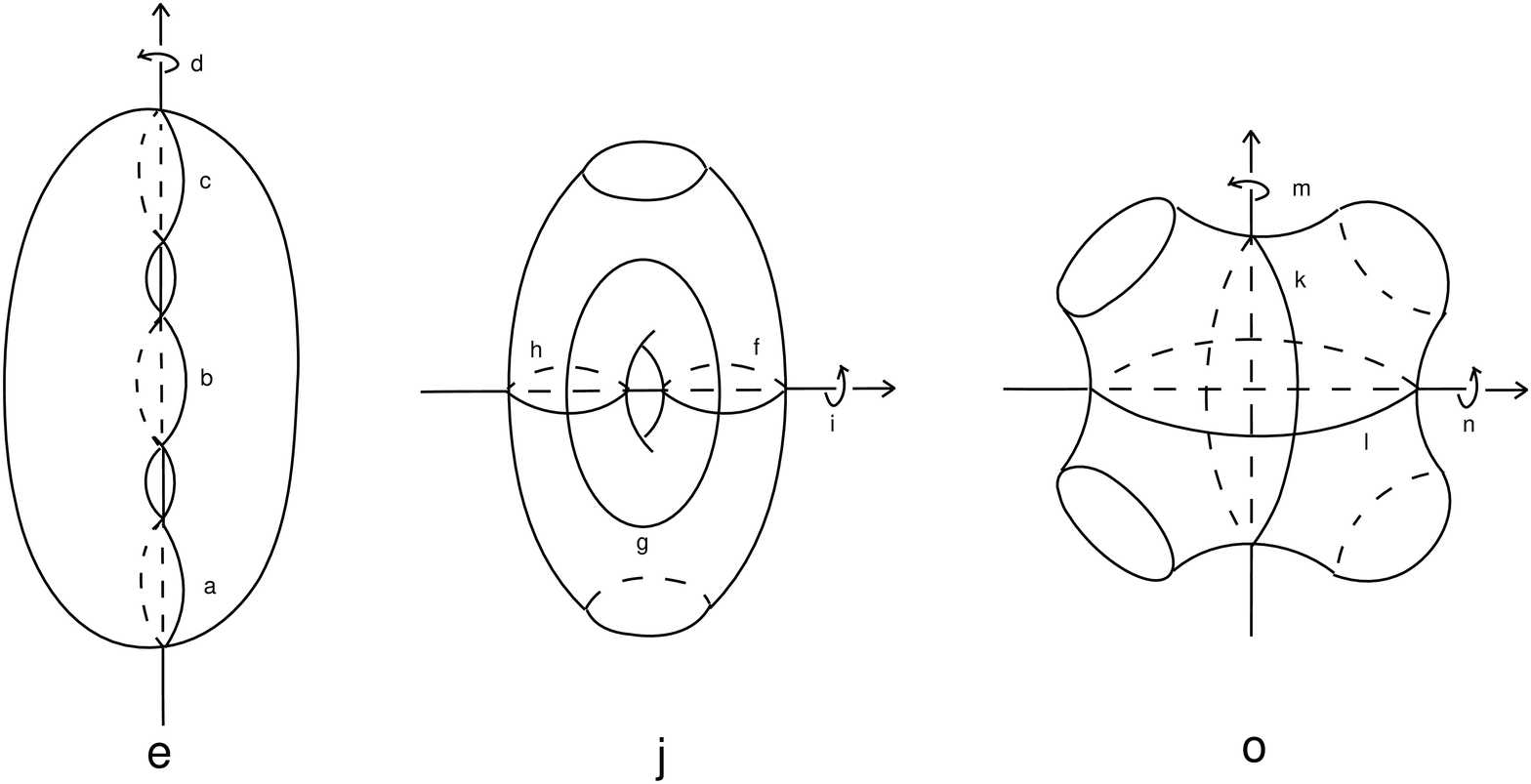}
\vskip 0.5 truecm
 \centerline{Figure 1}
\end{center}

A quick but interesting corollary of Corollary \ref{inv2} and Corollary \ref{special} is the following:
\begin{col}
For closed surface $S$, if $A(S,\phi)\ne \mathbb{Q}$,\\
(a) $(S,\phi)$ is not fibered cone commensurable with any pseudo-Anosov map on $\Sigma_{2,0}$;\\
(b) for any pseudo-Anosov map $(S',\phi')$ with $b_1(M(S',\phi'))=1$, $(S,\phi)$ is not fibered cone commensurable with $(S',\phi')$.
\end{col}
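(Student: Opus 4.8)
The plan is to prove both items by contradiction, feeding Corollary \ref{inv2} the two rationality statements already in hand. Throughout, fix a closed surface $S$ and a pseudo-Anosov $\phi$ with $A(S,\phi)\ne\mathbb{Q}$. The first thing I would record is that this forces $b_1(M(S,\phi))>1$: if $H^1(M(S,\phi);\mathbb{R})$ were one–dimensional, the fibered face would reduce to the single rational class $[S]/\|[S]\|$, and then $A(S,\phi)=\mathbb{Q}$.

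For (a), note that $\Sigma_{2,0}$ is among the surfaces in Corollary \ref{special}, so every pseudo-Anosov map $(\Sigma_{2,0},\psi)$ has $A(\Sigma_{2,0},\psi)=\mathbb{Q}$. If $(S,\phi)$ were fibered cone commensurable with some such $(\Sigma_{2,0},\psi)$, then Corollary \ref{inv2} would give $A(S,\phi)=A(\Sigma_{2,0},\psi)=\mathbb{Q}$, contradicting the standing hypothesis. Hence no such $\psi$ exists.

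For (b), suppose toward a contradiction that $(S,\phi)$ is fibered cone commensurable with $(S',\phi')$ where $b_1(M(S',\phi'))=1$; let $M$ be the common finite cover with $p_1:M\to M(S,\phi)$, $p_2:M\to M(S',\phi')$, and with $p_1^*([S]),p_2^*([S'])$ lying in a single fibered cone $C_M$ of $H^1(M;\mathbb{R})$. Since $p_1^*$ is injective, $b_1(M)\ge b_1(M(S,\phi))>1$, so the Teichmuller-polynomial machinery applies to $C_M$, the invariant $A_{C_M}$ is defined, and $A(S,\phi)=A_{C_M}$ by Proposition \ref{inv}. Now, imitating the proof of Proposition \ref{inv}, pass to a further finite cover $p':M''\to M$ so that $p''=p_2\circ p':M''\to M(S',\phi')$ is regular, and let $C_{M''}$ be the fibered cone of $M''$ containing $p''^*([S'])$; since $p'^*$ only rescales the Thurston norm (and hence carries fibered cones into fibered cones), $C_{M''}$ also contains $p'^*(C_M)$, in particular $p'^*p_1^*([S])$. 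Applying Lemma \ref{cover} to the regular cover $p''$ gives $m_{F''}=\tfrac1{\deg p''}\,p''^*\!\big(m_{F_{M(S',\phi')}}\big)$; but $b_1(M(S',\phi'))=1$ forces $m_{F_{M(S',\phi')}}=[S']/\|[S']\|$, a rational class, so $m_{F''}$ is rational and $A_{C_{M''}}=\mathbb{Q}$. A second application of Proposition \ref{inv}, this time to the cover $M''\to M(S,\phi)$ with fiber pullback $p'^*p_1^*([S])\in C_{M''}$, then yields $A(S,\phi)=A_{C_{M''}}=\mathbb{Q}$, the desired contradiction.

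The corollary is genuinely quick, so I do not expect a serious obstacle; the only point needing care is (b), where $A(S',\phi')$ is not literally covered by the definition, since that definition presumes $b_1>1$. The resolution is precisely the maneuver above: work on the common cover $M$, which is automatically forced to have $b_1(M)>1$ by the source side $(S,\phi)$, and observe that a $b_1=1$ factor contributes only a single rational point to the fibered face, so the invariant of the shared cone can only be $\mathbb{Q}$. Everything else is a direct invocation of Corollary \ref{inv2}, Corollary \ref{special}, Lemma \ref{cover}, and Proposition \ref{inv}.
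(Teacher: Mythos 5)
Your proof is correct and follows exactly the route the paper intends: the paper offers no written proof, presenting the statement as an immediate consequence of Corollary \ref{inv2} and Corollary \ref{special}, and your argument is precisely that deduction. Your extra care in (b) about the degenerate case $b_1(M(S',\phi'))=1$ — where the fibered face is a single rational point, so the shared cone's invariant on a common cover must be $\mathbb{Q}$ — is a legitimate and correctly resolved point that the paper leaves implicit.
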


\subsection{Irrationality Implies Transcendentality}\label{transsec}

The following theorem in number theory is an equivalent formulation of Hilbert's Seventh Problem (which explains why logarithm function is called
{\it transcendental function}):
\begin{thm}\label{Hilbert}(\cite{FN} Theorem 3.2)
 Let $\alpha,\beta,\gamma$ be algebraic numbers, and $\alpha \beta \ln{\beta} \ne 0$, with $\gamma=\frac{\ln{\alpha}}{\ln{\beta}}$, then $\gamma \in \mathbb{Q}$.
\end{thm}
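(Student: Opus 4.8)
The plan is to recognize Theorem \ref{Hilbert} as the Gelfond--Schneider theorem and to prove it by the classical auxiliary-function method. Dividing the hypothesis by $\ln\beta$, the relation $\gamma=\ln\alpha/\ln\beta$ says precisely that $\alpha=\beta^{\gamma}=e^{\gamma\ln\beta}$, where $\beta$ is algebraic and $\ln\beta\neq 0$ (so $\beta\neq 0,1$). Hence it suffices to prove: if, in addition to the standing hypotheses, $\gamma$ is algebraic and $\gamma\notin\mathbb{Q}$, then $\alpha$ is transcendental, contradicting the assumption that $\alpha$ is algebraic. So I would assume for contradiction that $\alpha,\beta,\gamma$ are all algebraic with $\gamma$ irrational, fix the number field $K=\mathbb{Q}(\alpha,\beta,\gamma)$, and derive a contradiction.

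First I would set $\lambda=\ln\beta$ and form the auxiliary entire function
\[
F(z)=\sum_{i=0}^{L}\sum_{j=0}^{L} c_{ij}\,\beta^{iz}\alpha^{jz}=\sum_{i=0}^{L}\sum_{j=0}^{L} c_{ij}\,e^{(i+j\gamma)\lambda z},
\]
and work with the normalized derivative operator $D=\lambda^{-1}\,d/dz$, so that for every integer $s$ the quantity $(D^{t}F)(s)=\sum_{i,j} c_{ij}(i+j\gamma)^{t}\beta^{is}\alpha^{js}$ lies in $K$. I would then choose rational integers $c_{ij}$, not all zero, by Siegel's Lemma (the pigeonhole principle applied to a linear system with algebraic-integer coefficients of controlled size) so that $(D^{t}F)(s)=0$ for all $0\le t<M$ and $1\le s\le N$. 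This is possible for suitable parameters with $(L+1)^{2}$ somewhat larger than $MN\,[K:\mathbb{Q}]$, since each value $\beta^{is}\alpha^{js}$ lies in $K$ and, after multiplication by one fixed denominator, is an algebraic integer of size bounded polynomially in the parameters.

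The heart of the argument is the extrapolation step. Using the maximum-modulus (Schwarz) lemma together with the many known zeros of $F$, I would bound $|(D^{t}F)(s)|$ and all of its archimedean conjugates for points $s$ and orders $t$ slightly beyond the initial range; since $(D^{t}F)(s)\in K$, Liouville's inequality (a nonzero element of $K$, cleared of denominators, has absolute norm $\ge 1$) forces $(D^{t}F)(s)=0$ as soon as the analytic upper bound undercuts the arithmetic lower bound. Iterating this with a decreasing order of vanishing and an increasing set of points propagates the vanishing until $F$ vanishes to high order at arbitrarily many integers, which forces $F\equiv 0$. But because $\gamma\notin\mathbb{Q}$ and $\lambda\neq 0$, the exponents $(i+j\gamma)\lambda$ are pairwise distinct, so the exponential monomials $e^{(i+j\gamma)\lambda z}$ are linearly independent over $\mathbb{C}$; hence $F\equiv 0$ forces every $c_{ij}=0$, contradicting their choice. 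Therefore $\gamma\in\mathbb{Q}$.

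The main obstacle is the quantitative bookkeeping in the extrapolation: one must choose $L,M,N$ and the schedule of (order, range) pairs in the iteration so that the analytic estimate from the Schwarz lemma genuinely beats the Liouville lower bound at every stage, while simultaneously controlling the heights and common denominators of the algebraic numbers $(D^{t}F)(s)$. An alternative I would keep in reserve is Schneider's version, which replaces $F(z)$ by a two-variable function $\sum_{i,j} c_{ij}\,\beta^{iz}\,e^{j\lambda w}$ interpolated at lattice points, trading repeated differentiation for a two-variable vanishing determinant. In any case, for the purposes of this paper only the clean statement is needed, and the full argument is supplied by the cited reference \cite{FN}.
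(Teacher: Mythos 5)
The paper does not prove this statement at all: it is quoted verbatim as Theorem 3.2 of the cited reference \cite{FN}, being (as the paper notes) an equivalent formulation of Hilbert's seventh problem, i.e.\ the Gelfond--Schneider theorem. So there is no in-paper argument to compare against. Your identification of the statement is correct, and your reduction is the right one: from $\gamma=\ln\alpha/\ln\beta$ one gets $\alpha=e^{\gamma\ln\beta}$ with $\beta$ algebraic, $\beta\neq 0,1$ (since $\beta\ln\beta\neq 0$), so if $\gamma$ were algebraic irrational, Gelfond--Schneider would force $\alpha$ to be transcendental, contradicting the hypothesis; hence $\gamma\in\mathbb{Q}$. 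Your outline of Gelfond's proof --- the auxiliary function $F(z)=\sum c_{ij}e^{(i+j\gamma)\lambda z}$, the observation that $(D^tF)(s)\in K$ for the normalized derivative $D=\lambda^{-1}d/dz$, Siegel's lemma, extrapolation via the Schwarz lemma against Liouville's inequality, and the final appeal to linear independence of the exponentials $e^{(i+j\gamma)\lambda z}$ (valid precisely because $\gamma\notin\mathbb{Q}$ and $\lambda\neq 0$ make the exponents distinct) --- is the standard and correct architecture.

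That said, what you have written is a proof \emph{plan}, not a proof: the entire mathematical content of Gelfond--Schneider lives in the extrapolation step you defer, namely exhibiting explicit parameters $L,M,N$ and an induction schedule for which the analytic upper bound on $|(D^tF)(s)|$ and its conjugates actually undercuts the Liouville lower bound at every stage, while the heights and denominators of $\beta^{is}\alpha^{js}$ and $(i+j\gamma)^t$ stay under control. You acknowledge this yourself, and you do not carry it out, so as a self-contained argument the proposal is incomplete. For the purposes of this paper that is immaterial --- the author likewise takes the theorem as a black box from \cite{FN} --- but be aware that ``the analytic bound beats the arithmetic bound for suitable parameters'' is exactly the claim that resisted proof for thirty years after Hilbert posed the problem, and no reader should mistake the sketch for a verification of it.
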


This theorem implies that all the irrational elements of $A(S,\phi)$ are transcendental:
\begin{prop}\label{transcendental}
 Suppose that $A(S,\phi)\ne \mathbb{Q}$, then for any $\omega \in A(S,\phi)\setminus \mathbb{Q}$, $\omega$ is a transcendental number.
\end{prop}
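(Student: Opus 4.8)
The plan is to reduce the statement to Theorem~\ref{Hilbert}. I will show first that each coordinate $r_i$ of the minimal point $m_F$ can be written as
\[
r_i=\frac{\log\gamma_i}{\log\lambda_0},\qquad \lambda_0:=\lambda_C(m_F),
\]
where $\gamma_1,\dots,\gamma_b$ and $\lambda_0$ are \emph{algebraic}, all the $\gamma_i$ are positive reals and $\lambda_0>1$; the point that matters is that the denominator $\log\lambda_0$ is common to all $i$. Granting this, pick $\omega\in A(S,\phi)\setminus\mathbb{Q}$ and write $\omega=\sum_{i=1}^b q_ir_i$ with $q_i\in\mathbb{Q}$. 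Then $\omega=\log\!\big(\prod_i\gamma_i^{q_i}\big)/\log\lambda_0=\log\eta/\log\lambda_0$, where $\eta:=\prod_i\gamma_i^{q_i}$ is a positive algebraic number (a finite product of rational powers of algebraic numbers). Thus $\eta,\lambda_0$ are algebraic with $\eta\,\lambda_0\,\ln\lambda_0\neq0$ and $\omega=\ln\eta/\ln\lambda_0$; if $\omega$ were algebraic, Theorem~\ref{Hilbert} applied with $\alpha=\eta$, $\beta=\lambda_0$, $\gamma=\omega$ would force $\omega\in\mathbb{Q}$, contradicting the choice of $\omega$. Hence $\omega$ is transcendental.

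The substance is the displayed formula for $r_i$, and here is how I would obtain it. Fix the dual $\mathbb{Z}$-basis $\{t_i\}$ of $H_1(N;\mathbb{Z})/\mathrm{Tor}$ to $\{\alpha_i\}$, so $r_i=\langle m_F,t_i\rangle$; write $x=\sum_i x_it_i$ with $x_i\in\mathbb{Z}$ for the class dual to the fibered face, and view $\Theta_F=\sum_g a_g\,g$ as a Laurent polynomial $\Theta_F(z)\in\mathbb{Z}[z_1^{\pm1},\dots,z_b^{\pm1}]$ under $g=\sum_i g_it_i\mapsto\prod_i z_i^{g_i}$. Set $z_i:=\lambda_0^{r_i}$, so $\lambda_0^{\langle m_F,g\rangle}=\prod_i z_i^{g_i}$ for every $g$. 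By Theorem~\ref{dilatation} and Remark~\ref{PF}, $\lambda_0$ is the largest (and, by Perron--Frobenius, simple) root of $t\mapsto\Theta_F(t^{m_F})$, so $\Theta_F(z)=0$. Moreover $m_F$ is the critical point of $\lambda_C$ on the slice $\{\langle\cdot,x\rangle=1\}$, so differentiating the analytic identity $\sum_g a_g\,\lambda_C(\alpha)^{\langle\alpha,g\rangle}\equiv0$ along directions $v$ tangent to the slice, and using $D_v\lambda_C(m_F)=0$ together with $\log\lambda_0\neq0$, yields $\sum_g a_g\lambda_0^{\langle m_F,g\rangle}\,g=c\,x$ in $H_1(N;\mathbb{R})$ for some scalar $c$ (simplicity of the root makes the coefficient of $D\lambda_C$ nonzero, so it drops out). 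In the coordinates above this is exactly $z_j\,\partial_{z_j}\Theta_F(z)=c\,x_j$ for $j=1,\dots,b$. Hence $(z_1,\dots,z_b,c)$ solves a system of $b+1$ polynomial equations with integer coefficients; if this solution is an \emph{isolated} point of the complex solution set, then $z_1,\dots,z_b\in\overline{\mathbb{Q}}$, whence $\lambda_0=\prod_i z_i^{x_i}$ (using $\langle m_F,x\rangle=\|m_F\|=1$) is algebraic and $r_i=\log z_i/\log\lambda_0$, i.e.\ $\gamma_i=z_i$.

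The hard part will be showing that this solution is isolated. I plan to deduce it from non-degeneracy of the critical point: $1/\log\lambda_C$ is homogeneous of degree one and, by Matsumoto's theorem \cite{Ma}, strictly concave along rays that miss the origin, so its Hessian is negative semidefinite with kernel the radial line, and in particular its restriction to the slice through $F$ is negative semidefinite. Upgrading this to negative definiteness at $m_F$ is the delicate step; I would carry it out via the standard second-variation computation for dilatation functions, in which the relevant quadratic form is controlled by the Newton polytope of $\Theta_F$ being full-dimensional — which holds because $\|\cdot\|_{\Theta_F}$ is a genuine norm (Theorem~\ref{cone}). With non-degeneracy in hand, the implicit function theorem applied to the complexified equations $\Theta_F(z)=0$, $z_j\partial_{z_j}\Theta_F(z)=c\,x_j$ shows $(z,c)$ is isolated, and an isolated point of an affine variety defined over $\mathbb{Q}$ has coordinates in $\overline{\mathbb{Q}}$; this closes the argument. (Alternatively the shape of $r_i$ may be quoted from the development of Section~\ref{transsec}; I reproduce the derivation because it is the common denominator $\log\lambda_0$ that makes the Hilbert-seventh step apply to arbitrary $\mathbb{Q}$-combinations of the $r_i$.)
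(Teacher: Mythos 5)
Your argument is, in outline, the same as the paper's: form the system consisting of $\Theta_F(z)=0$ together with the first-order (critical-point) conditions at $m_F$, conclude that the $z_i=\lambda_0^{r_i}$ are algebraic, observe that $\lambda_0=\prod_i z_i^{x_i}$ is then algebraic because $\langle m_F,x\rangle=1$, and feed the resulting quotient of logarithms into Theorem \ref{Hilbert}. The reduction and the final Gelfond--Schneider step are correct and match the paper exactly (your Lagrange-multiplier form $z_j\partial_{z_j}\Theta_F=c\,x_j$ is equivalent to the paper's $b-1$ tangential equations).

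The one place you depart from the paper is the justification that the $z_i$ are algebraic, and that is also the place where your proposal is not yet a proof. You correctly identify that one needs the solution to be isolated in the \emph{complex} solution set of a system defined over $\mathbb{Q}$ (mere uniqueness of the real minimal point, which is all the paper invokes, does not rule out the point sitting on a positive-dimensional component). But the route you propose to isolatedness is left as a plan and, as sketched, does not go through: Matsumoto's strict concavity of $1/\log\lambda$ along lines does not imply the Hessian is negative definite at $m_F$ (a function like $-t^4$ is strictly concave with degenerate Hessian), and "controlled by the Newton polytope being full-dimensional" is a heuristic, not an argument; moreover the Jacobian of your $(b+1)\times(b+1)$ system in $(z,c)$ is not literally the Hessian of $\lambda_C|_F$, so even granting non-degeneracy of the latter there is a computation missing. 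To be fair, the paper's own treatment of this step ("the equations are independent; eliminate variables inductively") is no more rigorous than yours; but since you explicitly flag it as the hard part and then only outline it, the proposal is incomplete precisely there. Everything before and after that step is sound.
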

\begin{proof}
Take a basis $(\alpha_1,\cdots,\alpha_b)$ of $H^1(N;\mathbb{Z})$, and a dual basis $(x_1,\cdots,x_b)$ of $H_1(N;\mathbb{Z})/Tor$ (here $b=b_1(M)$). Also take a basis
$(v_1,\cdots,v_{b-1})$ of the tangent space $T_{m_F}F$. Let the Teichmuller polynomial $\Theta_F=\sum_ga_g\cdot g$, and let
each $g$ be expressed as $g=\sum_{i=1}^bg_ix_i$, here $g_i\in \mathbb{Z}$.

Let $m_F=\sum_{i=1}^b r_i\alpha_i$ be the minimal point of the restriction of $\lambda(\cdot)$ on fibered face $F$. Since $\|m_F\|=1$,
for $x=\sum_{i=1}^b m_ix_i\in H_1(N;\mathbb{Z})/Tor$ dual to fibered face $F$, we have $\|m_F\|=\langle m_F,x\rangle=\sum_{i=1}^b r_i\cdot m_i=1$,
where $m_i\in \mathbb{Z}$.

Let $\lambda_0=\lambda(m_F)$. Then by Theorem \ref{dilatation},
$$0=\sum_g a_g\lambda_0^{\langle m_F,g \rangle}=\sum_g (a_g \prod_{i=1}^b(\lambda_0^{r_i})^{g_i}).$$
Since $m_F$ is the minimal point, by taking derivatives along direction $v_i,i=1,\cdots,b-1$, we get
$$0=\sum_g a_g \langle v_i,g \rangle \lambda_0^{\langle m_F,g \rangle}=\sum_g (a_g  \langle v_i,g \rangle\prod_{i=1}^b(\lambda_0^{r_i})^{g_i}),\ i=1,\cdots,b-1.$$

So $(\lambda_0^{r_1},\cdots, \lambda_0^{r_b})$ is a solution of polynomial equation:
$$
\left\{ \begin{array}{l}
        \sum_g a_gX_1^{g_1}\cdots X_b^{g_b}=0, \\
        \sum_g a_g\langle v_1,g\rangle X_1^{g_1}\cdots X_b^{g_b}=0, \\
        \cdots \\
        \sum_g a_g \langle v_{b-1},g \rangle X_1^{g_1}\cdots X_b^{g_b}=0.
\end{array} \right.
$$

Since the minimal point $m_F$ is unique, these equations are independent, and one of the solution is $(\lambda_0^{r_1},\cdots, \lambda_0^{r_b})$.
Solve the equation by eliminating free variables inductively, we know that $\lambda_0^{r_1},\cdots, \lambda_0^{r_b}$ are all algebraic numbers. For any $r \in A(S,\phi)$, if $r=\sum_{i=1}^b q_i\cdot r_i$
with $q_i\in \mathbb{Q}$, then $$r=\frac{\ln{(\prod_{i=1}^b (\lambda_0^{r_i})^{q_i})}}{\ln{\lambda_0}}= \frac{\ln{(\prod_{i=1}^b (\lambda_0^{r_i})^{q_i})}}
{\ln{(\prod_{i=1}^b (\lambda_0^{r_i})^{m_i})}}.$$

Since $\lambda_0=\lambda(m_F)$ is a positive real number, $\lambda_0^{r_i}$ are all positive real algebraic numbers. For $q_i,m_i \in \mathbb{Q}$, $\prod_{i=1}^b (\lambda_0^{n_i})^{q_i}$ and
$\prod_{i=1}^b (\lambda_0^{n_i})^{m_i}$ are both positive real algebraic numbers. By Theorem \ref{Hilbert}, $r$ is either a
rational number of a transcendental number.
\end{proof}

Given Proposition \ref{transcendental}, it makes sense that we call $A(S,\phi)$ a {\it transcendental invariant} of pseudo-Anosov maps.

\section{Drilling and Branched Covering Theorem}\label{cons1}

\subsection{Drilling Theorem}

Given a pseudo-Anosov map $\phi$ on closed surface $S$, it is associated with a hyperbolic surface bundle
$N=M(S,\phi)=S\times I /(x,0)\sim (\phi(x),1)$ and a suspension flow on $N$, while this flow is universal for the fibered face. We will call a closed orbit of the suspension flow a {\it primitive closed orbit} if it goes around a circle only once, and we sometimes only call it a closed orbit. If we want to talk about a closed orbit that goes around a circle more than once, we will call it a {\it nonprimitive closed orbit}.

 Take a primitive closed orbit $c\subset N$ of the suspension flow on
$N=M(S,\phi)$. Let the manifold given by drilling $N$ along $c$ be $N_c=N\setminus c$, which is also a hyperbolic surface bundle.
Let $S_c=S\setminus (S\cap c)$, and $\phi_c=\phi|_{S_c}$,
then we have $N_c=M(S_c,\phi_c)$. There is a natural inclusion
$i_c:\ N_c\rightarrow N$.

\begin{lem}\label{iso}
$i_c^*:H^1(N;\mathbb{R})\rightarrow H^1(N_c;\mathbb{R})$ is an isomorphism.
\end{lem}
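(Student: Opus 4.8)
\emph{Plan of proof.} The plan is to reduce to first homology with real coefficients and show that the inclusion induces an isomorphism $(i_c)_*\colon H_1(N_c;\mathbb{R})\to H_1(N;\mathbb{R})$; since $\mathbb{R}$ is a field, dualizing then yields the statement for $i_c^*$ on $H^1$ (and, using Poincar\'e--Lefschetz duality, in every degree, although only degree one is needed here).

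First I would set up the pair $(N,N_c)$, writing $N=N_c\cup_{T}\nu(c)$ where $\nu(c)\cong S^1\times D^2$ is a closed tubular neighbourhood of the embedded closed orbit $c$, $T=\partial\nu(c)$ is a torus, and $N_c$ is taken to be the compact manifold with boundary $T$ obtained by deleting the interior of $\nu(c)$ (homotopy equivalent to $N\setminus c$). Excision gives $H_*(N,N_c;\mathbb{R})\cong H_*(\nu(c),T;\mathbb{R})$, and Lefschetz duality for the solid torus gives $H_1(N,N_c;\mathbb{R})=0$ together with $H_2(N,N_c;\mathbb{R})\cong\mathbb{R}$, the latter generated by the relative class $[\Delta]$ of a meridian disk $\Delta$ of $c$. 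Feeding this into the long exact sequence of the pair gives
$$H_2(N;\mathbb{R})\xrightarrow{\ \rho\ }H_2(N,N_c;\mathbb{R})\xrightarrow{\ \partial\ }H_1(N_c;\mathbb{R})\xrightarrow{\ (i_c)_*\ }H_1(N;\mathbb{R})\longrightarrow 0,$$
so $(i_c)_*$ is automatically onto, and it is an isomorphism exactly when $\partial=0$, i.e. exactly when $\rho$ is surjective.

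Next I would identify $\rho$. Under the identifications above, $\rho[Z]=\langle\mathrm{PD}[Z],c\rangle\,[\Delta]$ for $[Z]\in H_2(N;\mathbb{R})$, where $\mathrm{PD}\colon H_2(N;\mathbb{R})\xrightarrow{\ \sim\ }H^1(N;\mathbb{R})$ is Poincar\'e duality and $\langle\cdot,\cdot\rangle$ is the evaluation pairing; concretely $\langle\mathrm{PD}[Z],c\rangle$ is the algebraic intersection number of a transverse representative of $Z$ with $c$. Hence $\rho$ is onto iff some $\alpha\in H^1(N;\mathbb{R})$ has $\langle\alpha,c\rangle\ne 0$, which, the evaluation pairing being perfect, is equivalent to $[c]\ne 0$ in $H_1(N;\mathbb{R})$. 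Finally $[c]\ne 0$, because $c$ is a closed orbit of the suspension flow of $N=M(S,\phi)$ and that flow is positively transverse to the fiber $S$: thus $\langle[S],c\rangle$ is the number of points of $c\cap S$, a positive integer, so $[c]$ pairs nontrivially with $[S]\in H^1(N;\mathbb{R})$. Therefore $\rho$ is surjective and $(i_c)_*$, hence $i_c^*$, is an isomorphism.

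I do not anticipate a serious obstacle: the argument is formal modulo the single geometric input $[c]\ne 0$, which is immediate from the transversality of the suspension flow to the fibers. The one step that deserves a little care is checking that the excision and Lefschetz-duality isomorphisms are compatible with the intersection-number description of $\rho$; I would pin this down by fixing an explicit meridian disk of $c$ and a representative of $Z$ transverse to it.
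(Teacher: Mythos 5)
Your proof is correct and is essentially the paper's argument in dual form: the paper runs Mayer--Vietoris on $H^1$ to get injectivity of $i_c^*$ and then matches dimensions via excision and the pair $(N,c)$, while you run the long exact sequence of the pair $(N,N_c)$ on $H_1$ and dualize over the field $\mathbb{R}$. Both arguments hinge on exactly the same geometric input, namely that $[c]\ne 0$ in $H_1(N;\mathbb{R})$ because the closed orbit $c$ meets the fiber $S$ positively.
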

\begin{proof}
In the proof of this Lemma, all the (co)homology groups have $\mathbb{R}$-coefficients, and we will omit the coefficient.

Since $N=N_c\cup_{T^2}D^2\times S^1$, we have M-V sequence
$$0\rightarrow H^1(N)\rightarrow H^1(N_c)\oplus H^1(D^2\times S^1)\rightarrow H^1(T^2)\rightarrow \cdots.$$

Since $H^1(D^2\times S^1)\rightarrow H^1(T^2)$ is injective, $i_c^*:H^1(N)\rightarrow H^1(N_c)$ is injective. So it suffice to show
$H^1(N)$ and $H^1(N_c)$ have the same dimension. By Lefschetz duality, we need only to show that $H_2(N)$ and $H_2(N_c,\partial N_c)$ have
the same dimension.

By excision, we have $H_2(N_c,\partial N_c)\cong H_2(N,c)$. By M-V sequence
$$0\rightarrow H_2(N)\rightarrow H_2(N,c) \rightarrow H_1(c)\rightarrow H_1(N)\rightarrow \cdots,$$
and $H_1(c)\rightarrow H_1(N)$ is injective, $H_2(N)\cong H_2(N,c)\cong H_2(N_c,\partial N_c)$.
\end{proof}

In general, if $S$ is not closed, $dim(H^1(N_c))$ maybe greater than $dim(H^1(N))$.

Let $C\subset H^1(N;\mathbb{R})$ and $C'\subset H^1(N_c;\mathbb{R})$ be the fibered cone containing the dual of $[S]$ and $[S_c]$,
while $F$ and $F'$ be the corresponding fibered face respectively. 

For any $\alpha \in C$, it is easy to see that $\lambda_C(\alpha)=\lambda_{C'}(i_c^*(\alpha))$, since it holds for integer classes. Since $\lambda(\alpha)$ goes to infinity when $\alpha$ goes to $\partial C$, we have $i_c^*(C)=C'$.
Let $x \in H_1(N;\mathbb{Z})/Tor$ be the dual of fibered cone $C$, i.e. for any $\alpha \in C$, $\|\alpha\|=\langle \alpha,x \rangle$ holds. Choose an orientation on $c$ such that $c$ intersects $[S]$ positively, and let the homology class of $c$ also denoted by $c$. Then
$\|i_c^*(\alpha)\|=\|\alpha\|+\langle \alpha,c \rangle=\langle \alpha,c+x \rangle$ for any $\alpha \in C$. This equality implies that although $i_c^*(C)=C'$, $i_c^*(F)$ may not be parallel with $F'$.

Let $F_c$ denote $(i_c^*)^{-1}(F')\subset H^1(N;\mathbb{R})$ and $m_{F_c}$ be the minimal point of
the restriction of $\lambda_C(\cdot)$ on $F_c$, then we have $i_c^*(m_{F_c})=m_{F'}$.
So to compute $A(S',\phi')$, we need only to compute the coordinates of $m_{F_c}$.
Actually, if $x$ and $c$ are linear dependent, $F_c$ is parallel to $F$, thus $m_{F_c}$ is a scaling of $m_F$.
Otherwise, $F_c$ is tilted with respect to $F$, and the number theoretical property of $m_{F_c}$ and $m_F$ can be quite different.

Let $c_1$ and $c_2$ be two different oriented closed orbits of suspension flow, and both of them intersect $[S]$ positively. We say $c_1$ and $c_2$
are {\it drilling equivalent} if $x+c_1$ is linear dependent with $x+c_2$.
 In this case, $F_{c_1}$ is parallel
with $F_{c_2}$, so $m_{F_{c_1}}$ is a scaling of $m_{F_{c_2}}$ and $A(S_{c_1},\phi_{c_1})=A(S_{c_2},\phi_{c_2})$.
Then we have the following {\it Drilling Theorem}:
\begin{thm}\label{drill}
 For all but finitely many drilling equivalent classes $c$, $A(S_c,\phi_c)\ne \mathbb{Q}$.
\end{thm}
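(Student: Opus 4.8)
The plan is to reduce the statement to a question about a one-parameter family of algebraic equations and then argue that only finitely many members of the family can have a rational minimizer. Fix the Teichmüller polynomial $\Theta_F = \sum_g a_g \cdot g$ of the fibered cone $C\subset H^1(N;\mathbb{R})$, together with the dual class $x\in H_1(N;\mathbb{Z})/Tor$. For a primitive closed orbit $c$ intersecting $[S]$ positively, Lemma \ref{iso} identifies $H^1(N_c;\mathbb{R})$ with $H^1(N;\mathbb{R})$ via $i_c^*$, and since $\lambda_C(\alpha)=\lambda_{C'}(i_c^*(\alpha))$ we have $i_c^*(C)=C'$; moreover $\|i_c^*(\alpha)\| = \langle \alpha, x+c\rangle$ on $C$. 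So computing $A(S_c,\phi_c)$ amounts to locating the point $m_{F_c}$ where $\lambda_C(\cdot)$ is minimized on the affine hyperplane $F_c = \{\alpha\in C : \langle \alpha, x+c\rangle = 1\}$. Since $F_c$ depends only on the homology class $y:=x+c$ and, by the drilling-equivalence hypothesis, only on its ray $\mathbb{R}_{>0}\cdot y$, the whole question depends only on finitely many data ($\Theta_F$, $x$) together with the ray through $y$.

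The first key step is to write down the critical-point equations for $m_{F_c}$, exactly as in the proof of Proposition \ref{transcendental}: picking a basis of the tangent space to $F_c$ at a candidate point and differentiating the relation $\sum_g a_g \lambda^{\langle \alpha,g\rangle}=0$ along those directions, one obtains a system of $b$ polynomial equations (the defining equation plus $b-1$ derivative equations) whose coordinates, together with the value $\lambda_0 = \lambda_C(m_{F_c})$, satisfy an algebraic system whose coefficients are polynomials in the components of $y$. The second key step is to observe that $m_{F_c}$ is rational — i.e. $A(S_c,\phi_c)=\mathbb{Q}$ — precisely when this system has a solution with all coordinate-logarithms of the form $\lambda_0^{r_i}$ having rational exponents relative to the $\mathbb{Z}$-pairing with $y$; equivalently, when the minimizer lies at a rational point of $F_c$. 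I would then set up an auxiliary algebraic variety in the variables $(\text{coordinates of } m, \lambda_0, \text{parameters of } y)$ cutting out "critical point of $\lambda_C$ on $F_c$", intersect with the locus "$m$ is a rational point of the integer hyperplane $\langle \alpha,y\rangle = 1$", and argue that this intersection, projected to the space of rays $y$, is a proper subvariety — hence misses all but finitely many integral rays $\mathbb{R}_{>0}\cdot(x+c)$, i.e. all but finitely many drilling-equivalence classes.

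The third step is to actually produce the needed properness, and this is where I expect the real content to sit. The naive concern is that the rational locus could be Zariski-dense in the parameter space; to rule that out one must exhibit at least one parameter value of $y$ for which the minimizer is \emph{provably irrational}, and then invoke the fact that the "rational minimizer" condition is algebraic in $y$ (so if it fails on a dense set it fails on a proper subvariety). One clean way to get such a witness: take $y$ of a special shape for which $\Theta_F(\lambda^y)=0$ reduces, along $F_c$, to an equation like $1 = \lambda^{-p}+\lambda^{-q}$ with $\gcd(p,q)$ controlled, whose unique minimizer on the relevant segment is explicitly computable and, by a direct application of Theorem \ref{Hilbert} (Hilbert's seventh problem) as in Proposition \ref{transcendental}, is transcendental unless $p=q$. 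Excluding the finitely many exceptional rays where degeneracies occur (e.g. $y$ parallel to $x$, where $F_c$ is parallel to $F$ and $m_{F_c}$ is a rational rescaling of $m_F$; or rays where the derivative system degenerates), one concludes that for all but finitely many drilling-equivalence classes the minimizer is irrational, hence by Proposition \ref{transcendental} transcendental, giving $A(S_c,\phi_c)\neq\mathbb{Q}$.

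The main obstacle, then, is the parameter-dependence argument: making precise that "$m_{F_c}$ is a rational point of $F_c$" defines a \emph{proper} algebraic condition on the ray of $y=x+c$, which requires both a careful elimination-theoretic description of the critical point as an algebraic function of $y$ and an honest construction of an irrational witness to break Zariski density. Everything else — the isomorphism $i_c^*$, the norm formula, the passage from irrational to transcendental — is already supplied by Lemma \ref{iso}, the discussion preceding the theorem, and Propositions \ref{transcendental} together with Theorem \ref{Hilbert}.
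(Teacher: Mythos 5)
Your overall setup (reducing to the tilted hyperplanes $F_c=\{\alpha\in C:\langle\alpha,x+c\rangle=1\}$, writing the critical-point system, and noting that only the ray of $x+c$ matters) matches the paper. But the core of your argument — step three — has a genuine gap, and the route you sketch to fill it cannot work.

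You propose to show that ``$m_{F_c}$ is a rational point'' is a \emph{proper algebraic} condition on the ray of $y=x+c$, and then to break Zariski density with a single irrational witness. The problem is that rationality of the solution of a parametrized algebraic system is an arithmetic condition, not an algebraic one: the set of parameters for which the solution is rational is typically Zariski-dense without being all of the parameter space (already for the system $X=t$ the ``rational locus'' is $\mathbb{Q}$ itself), so ``fails on a dense set $\Rightarrow$ contained in a proper subvariety'' is false for this condition. Moreover, even if the rational locus were a proper subvariety, when $b_1(N)\geq 3$ a proper subvariety of the space of rays can still contain infinitely many integral rays, so properness would not yield the finiteness you need. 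Your own text flags this step as ``where I expect the real content to sit,'' and indeed the content is missing: no amount of witness-finding repairs the Zariski-density logic.

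The paper's actual proof is arithmetic rather than geometric. Assuming $m_{F_c}$ rational with denominator $q_c$, one shows: (i) the critical-point system has degrees independent of $c$, so $\lambda_c=\lambda(m_{F_c})^{1/q_c}$ has degree over $\mathbb{Q}$ bounded by a constant $D$ depending only on $\Theta_F$ (using $\gcd(p_1,\dots,p_b)=1$ to see the field is generated by $\lambda_c$ alone); (ii) by Corollary \ref{integer} the specialized polynomial is monic with integer coefficients, so $\lambda_c$ is an algebraic integer, is the largest root in modulus (Perron--Frobenius), and satisfies $\lambda_c\leq\sum_g|a_g|$; (iii) there are only finitely many algebraic integers of bounded degree all of whose conjugates are bounded. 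Hence infinitely many rational drilling classes would force infinitely many of them to share one dilatation value $\lambda_0$, giving infinitely many integer points $q_cm_{F_c}$ on the level hypersurface $\lambda(\alpha)=\lambda_0$. A scaling/concavity argument (if $\beta'-\beta\in\bar C$ then rescaling $\beta$ onto the hyperplane through $\beta'$ strictly decreases $\lambda$, contradicting minimality of $\beta'$) confines these points to finitely many hyperplane slices, and an induction down to a ray, where $1/\log\lambda$ is strictly concave or linear, yields the contradiction. You would need to replace your step three entirely with an argument of this kind.
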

\begin{proof}
 For a drilling class $c$, we have $F_c=\{\alpha \in C|\ \langle \alpha,c+x \rangle=1\}$. Let $m_{F_c}$ be the minimal point of the restriction of $\lambda(\cdot)$ on $F_c$.

Let Teichmuller polynomial $\Theta_F=\sum_g a_g\cdot g$, then by Theorem \ref{dilatation},
$$\sum_g a_g\lambda(m_{F_c})^{\langle m_{F_c},g\rangle}=0.$$ Let $b=b_1(N)$, take a basis
$(v_1,\cdots,v_{b-1})$ of tangent plane $T_{m_{F_c}}F_c$. Since $m_{F_c}$ is the minimal point,
by taking derivative along direction $v_i$, we also have equations $$\sum_g a_g\langle v_i,g\rangle \lambda(m_{F_c})^{\langle m_{F_c},g\rangle }=0,\  i=1,\cdots, b-1.$$

Let $(\alpha_1,\cdots,\alpha_b)$ be a basis of $H^1(N;\mathbb{Z})$, and $(x_1,\cdots,x_b)$ be the dual basis of $H_1(N;\mathbb{Z})/Tor$.
Suppose $m_{F_c}$ is a rational class, then $m_{F_c}=\sum_{i=1}^b \frac{p_i}{q_c}\alpha_i$, here $p_i,q_c \in \mathbb{Z}$ and $gcd(p_1,\cdots,p_b,q_c)$ $=1$. Since $\langle m_{F_c},x\rangle=1$, $gcd(p_1,\cdots,p_b)=1$ holds. For each $g$ appears in the Teichmuller polynomial, let $g=\sum_{i=1}^{b}g_ix_i$, here $g_i \in \mathbb{Z}$.

Then $(\lambda(m_{F_c})^{\frac{p_1}{q_c}},\cdots,\lambda(m_{F_c})^{\frac{p_b}{q_c}})$ is a solution of equation:
$$
\left\{ \begin{array}{l}
        \sum a_gX_1^{g_1}\cdots X_b^{g_b}=0, \\
        \sum a_g\langle v_1,g \rangle X_1^{g_1}\cdots X_b^{g_b}=0, \\
        \cdots \\
        \sum a_g \langle v_{b-1},g \rangle X_1^{g_1}\cdots X_b^{g_b}=0.
\end{array} \right.
$$
Here only $v_1,\cdots,v_{b-1}$ depend on the drilling class $c$, thus only the coefficients of the equation depend on $c$, the degrees do not. By
solving the equation by eliminating free variables inductively, we know number field
$\mathbb{F}=\mathbb{Q}(\lambda(m_{F_c})^{\frac{p_1}{q_c}},\cdots,\lambda(m_{F_c})^{\frac{p_b}{q_c}})$
is a finite extension over $\mathbb{Q}$ with $[\mathbb{F}:\mathbb{Q}]\leq D$ for some $D$. Here $D$ only depends on $\Theta_F$, but does not depend on $c$. Since $gcd(p_1,\cdots,p_b)=1$,
we have $\mathbb{F}=\mathbb{Q}(\lambda(m_{F_c})^{\frac{1}{q_c}})$, so $\deg(\lambda(m_{F_c})^{\frac{1}{q_c}})\leq D$.

Since $\lambda_c=\lambda(m_{F_c})^{\frac{1}{q_c}}$ is the largest root of the Teichmuller polynomial $\Theta_F$ at $(p_1,\cdots,p_b)$
(Remark \ref{PF}), i.e. it is the largest root of $\sum a_g X^{\langle q_cm_{F_c},g\rangle}=0$, all the algebraic conjugations of
$\lambda_c$ has modulus smaller or equal to $\lambda_c$. On the other hand, by Corollary \ref{integer}, $\sum_g a_gX^{\langle q_cm_{F_c},g \rangle}$ has a unique leading term $X^{\langle q_cm_{F_c},d\cdot u\rangle}$ with coefficient $1$ while all the terms have integer coefficients and integer powers. So $\lambda_c\leq \sum_g |a_g|=D'$, and $\lambda_c$ is an algebraic integer.

Given $D,D'\in \mathbb{Z}_+$, there are only finitely many algebraic integers $\lambda$, such that $|\lambda|\leq D'$, $\deg{(\lambda)}\leq D$,
and $\lambda$ has the greatest modulus among its algebraic conjugations. This is because the minimal polynomial of $\lambda$ has bounded degree and
the coefficients are also bounded.

So if there are infinitely many drilling classes $c$ with $A(S_c,\phi_c)=\mathbb{Q}$, then there are infinitely many drilling classes
$c_1, c_2,\cdots$ such that $\lambda(m_{F_{c_i}})^{\frac{1}{q_{c_i}}}=\lambda_{c_i}=\lambda_0$. Here $m_{F_{c_i}}$ is the minimal point of $\lambda(\cdot)$ on $F_{c_i}$,
$q_{c_i}m_{F_{c_i}}$ is an integer class with $\lambda(q_{c_i}m_{F_{c_i}})=\lambda_0$. Since $q_{c_i}F_{c_i}$ is the tangent plane of
hypersurface $\{\alpha\in C|\ \lambda(\alpha)=\lambda_0\}$ with tangent point $q_{c_i}m_{F_{c_i}}$, different drilling class $c_i$ corresponds to different
integer point $q_{c_i}m_{F_{c_i}}$. So there are infinitely many integer points $q_{c_i}m_{F_{c_i}}$ on the
hypersurface $\lambda(\alpha)=\lambda_0$.

However, we claim that for any fixed $\lambda_0\in \mathbb{R}_+$, there are at most finitely many integer points on the hypersurface
$\lambda(\alpha)=\lambda_0$ which correspond to $q_cm_{F_c}$ for some drilling class $c$, thus get a contradiction.

{\it Proof of Claim:}
Suppose there are infinitely many integer points $\alpha\in C$ with $\lambda(\alpha)=\lambda_0$ and correspond to $q_cm_{F_c}$.

Let the fibered cone $C$ be equal to $\{x\in H^1(N,\mathbb{R})|\ p_i(x)>0,i=1,\cdots,n\}$, here each $p_i$ is an integer homology class in $H^1(N,\mathbb{Z})/Tor$. Take an
integer point $\beta \in C$ such that $\lambda(\beta)=\lambda_0$, let $P_i=p_i(\beta)$. Let $C'=\{\alpha \in C| p_i(\alpha)\geq P_i, i=1,\cdots,n\}$.

Suppose there is an integer point $\beta' \in C'$, $\beta'\ne \beta$ such that $\lambda(\beta')=\lambda_0$, and $\beta'=q_cm_{F_c}$ for some drilling class $c$.
Since $\beta' \in C'$, we have $p_i(\beta'-\beta)=p_i(\beta')-P_i\geq 0$. Let $\bar{C}$ be the closure of $C$, then $\bar{C}=\{x\in H^1(N,\mathbb{R})|\ p_i(x)\geq 0,i=1,\cdots,n\}$, we have
$\beta'-\beta \in \bar{C}$. Since $\beta \in C$ and $\beta \ne \beta'$, so $\langle \beta,c+x \rangle >0$, and $\langle \beta'-\beta,c+x \rangle >0$ for any closed orbit $c$.

Since $\beta'=q_cm_{F_c}$, $\beta'$ is the minimal point of the restriction of $\lambda(\cdot)$ on
$q_cF_c=\{\alpha \in C|\langle \alpha, c+x \rangle=\langle \beta', c+x\rangle\}$. However
$\lambda(\frac{\langle \beta', c+x \rangle}{\langle \beta, c+x \rangle}\beta)=\lambda(\beta)^{\frac{\langle \beta, c+x \rangle}
{\langle \beta', c+x \rangle}}<\lambda(\beta)=\lambda(\beta')$,
while $\frac{\langle \beta', c+x \rangle}{\langle \beta, c+x \rangle}\beta \in \{\alpha \in C|\langle \alpha, c+x \rangle=\langle \beta', c+x\rangle\}$.
It contradicts with the fact that $\beta'$ is the minimal point of $\lambda(\cdot)$ on $q_cF_c$.

So for any integer point $\beta'\in C$ satisfying the condition in the claim, we have $p_i(\beta')= j$, for some $i\in \{1,\cdots,n\}$ and
$j\in \{1,\cdots,P_i-1\}$, or $\beta'=\beta$. Since there are infinitely many such integer points, infinitely many of them satisfy $p_{i_0}(\alpha)=j_0$.
Here $\{\alpha \in C|\ p_{i_0}(\alpha)=j_0\}$ is a codimension $1$ hyperplane of $C$.

The argument used above can be repeated inductively to reduce the dimension. Finally we can reduce to the case that there is a ray
$\mathcal{R}\subset C$ which contains infinitely many integer points $\alpha$ with $\lambda(\alpha)=\lambda_0$.
However, since $\frac{1}{\log{\lambda(\cdot)}}$ is either strictly concave or linear (non constant) on rays (\cite{Ma}, \cite{McM}), there are at most two points on $\mathcal{R}$ assuming $\lambda_0$ by function $\lambda(\cdot)$, and we get a contradiction.
\end{proof}

\subsection{Branched Covering Theorem}
An analogy of the drilling construction is the following branched covering construction. Branched covering construction can give pseudo-Anosov maps on closed surfaces with irrational minimal point.

Let $N=M(S,\phi)$ be a closed hyperbolic surface bundle, let $c$ be an oriented closed orbit of the suspension flow, and $d(c)$ be the greatest common divisor of the coordinate of $c$ in $H_1(N;\mathbb{Z})/Tor$. If $gcd(Tor(H_1(N)),d(c))=1$, we can construct a $d(c)$-sheets cyclic branched cover of $N$ along $c$. To make this construction, we will first construct a cyclic cover of $N\setminus c$, then fill in a solid torus.

We have the following long exact sequence (with coefficients in
$\mathbb{Z}$): $$H_2(D^2\times S^1) \oplus H_2(N\setminus c)\rightarrow H_2(N) \rightarrow H_1(T^2)
\rightarrow H_1(D^2\times S^1) \oplus H_1(N\setminus c)\rightarrow H_1(N)\rightarrow 0.$$

For any $\alpha \in H_2(N)$, it is in the image of the first map in the exact sequence if and only if $\alpha \cap c=0$. Since
$H_2(N) \cong H^1(N) \cong Hom(H_1(N),\mathbb{Z})$, we know that for any $\alpha \in H_2(N)$, $d(c)$ is a divisor of $\alpha \cap c$, and there
exists $\alpha_0 \in H_2(N)$ such that $\alpha_0 \cap c=d(c)$. So the image of the second map is $\{kd(c)[m]|\ k\in \mathbb{Z}\}$, here
$[m]$ is the meridian class of the solid torus neighborhood of $c$. So $[m]$ is a torsion element in $H_1(N\setminus c)$ with order $d(c)$.

If $gcd(Tor(H_1(N)),d(c))=1$, since $|Tor(H_1(N\setminus c))|=d(c)\cdot |Tor(H_1(N))|$, $[m]$ generates a direct summand $Tor(H_1(N\setminus c))$ of order $d(c)$. So $Tor(H_1(N\setminus c))=\mathbb{Z}_{d(c)}\oplus A$. Then we can
define a map $H_1(N\setminus c)\rightarrow Tor(H_1(N\setminus c)) \rightarrow \mathbb{Z}_{d(c)}$, which gives a cyclic cover of $N\setminus c$,
thus a cyclic branched cover $p^c:N^c\rightarrow N$ along $c$ of degree $d(c)$. Let $S^c=(p^c) ^{-1}(S)$ (which is connected), then $S^c$ gives a surface bundle structure of $N^c$, and let $\phi^c$ be the corresponding pseudo-Anosov monodromy.

\begin{rem}
We can also get some other cyclic branched covers of $N$ along $c$ even if $gcd(Tor(H_1(N)),d(c))\ne 1$, but the covering degree can not be computed simply by the homology class of $c$.
\end{rem}

As the definition of drilling equivalent class, we will define branched covering equivalent class here. For two different closed orbits of
suspension flow $c_1$ and $c_2$, we say $c_1$ and $c_2$ are {\it branched covering equivalent} if $d(c_1)x+(d(c_1)-1)c_1$ is linear dependent
with $d(c_2)x+(d(c_2)-1)c_2$.

Then we have the following {\it Branched Covering Theorem}
\begin{thm}\label{branch}
 For all but finitely many branched covering classes $c$ satisfying $d(c)>1$ and \\
 $gcd(d(c),Tor(H_1(N;\mathbb{Z})))=1$, $A(S^c,\phi^c)\ne \mathbb{Q}$.
\end{thm}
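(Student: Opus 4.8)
The plan is to follow the strategy of the Drilling Theorem (Theorem \ref{drill}) almost verbatim, replacing the hyperplane $F_c=\{\alpha\in C\mid \langle\alpha,c+x\rangle=1\}$ by the correct hyperplane associated to the branched cover. First I would identify that hyperplane. On the branched cover $N^c$ we have $[S^c]=(p^c)^*[S]$, and since $p^c$ is a branched cover of degree $d=d(c)$ branched along $c$, the Thurston norm satisfies $\|(p^c)^*\alpha\|_{N^c}=d\|\alpha\|_N-(d-1)\langle\alpha,c\rangle=\langle\alpha,\,dx-(d-1)c\rangle$ for $\alpha\in C$ (the branch locus contributes with a deficit, exactly as drilling contributed a surplus; this is the branched-cover analogue of Gabai's formula used in Lemma \ref{cover}, combined with the drilling formula $\|i_c^*\alpha\|=\|\alpha\|+\langle\alpha,c\rangle$). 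Meanwhile $\lambda$ is insensitive to the cover: $\lambda_{N^c}((p^c)^*\alpha)=\lambda_N(\alpha)$ on integer classes hence on all of $C$. Therefore the minimal point of $\lambda$ on the fibered face of $N^c$ pulls back (after rescaling by $(p^c)^*$, which is injective on $H^1$) to the minimal point $m_{F^c}$ of $\lambda_C(\cdot)$ restricted to the hyperplane $F^c=\{\alpha\in C\mid \langle\alpha,\,dx-(d-1)c\rangle=1\}$ inside $H^1(N;\mathbb{R})$, and $A(S^c,\phi^c)$ is the $\mathbb{Q}$-module generated by the coordinates of $m_{F^c}$. The definition of branched covering equivalence via linear dependence of $d(c_1)x+(d(c_1)-1)c_1$ and $d(c_2)x+(d(c_2)-1)c_2$ is precisely the condition that the hyperplanes $F^{c_1}$ and $F^{c_2}$ are parallel, so $A$ is constant on equivalence classes.

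Next I would run the algebraic-number-theory argument of Theorem \ref{drill} word for word. Assume $m_{F^c}$ is rational, write $m_{F^c}=\sum\frac{p_i}{q_c}\alpha_i$ with $\gcd(p_1,\dots,p_b)=1$ (guaranteed since $\langle m_{F^c},x'\rangle$ is an integer, where $x'=dx-(d-1)c$). Then $(\lambda(m_{F^c})^{p_1/q_c},\dots,\lambda(m_{F^c})^{p_b/q_c})$ solves the same system of $b$ polynomial equations (the Teichmüller polynomial equation from Theorem \ref{dilatation} together with its $b-1$ directional derivatives along a basis of $T_{m_{F^c}}F^c$), in which only the coefficients, never the exponents, depend on $c$. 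As in the drilling proof this forces $\lambda_c:=\lambda(m_{F^c})^{1/q_c}$ to be an algebraic number of degree $\le D$ over $\mathbb{Q}$, with $D$ depending only on $\Theta_F$; by Corollary \ref{integer} the polynomial $\sum_g a_g X^{\langle q_c m_{F^c},g\rangle}$ has a unique monic leading term with integer coefficients, so $\lambda_c$ is an algebraic integer with $|\lambda_c|\le\sum_g|a_g|=D'$ which dominates its conjugates (Remark \ref{PF}). Only finitely many such algebraic integers exist, so infinitely many branched covering classes with $A=\mathbb{Q}$ would force infinitely many distinct integer points $q_c m_{F^c}$ lying on a single level hypersurface $\{\lambda(\alpha)=\lambda_0\}$.

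To kill that possibility I would reuse the Claim proved at the end of Theorem \ref{drill}: on a fixed level hypersurface $\{\lambda(\alpha)=\lambda_0\}$ there are at most finitely many integer points that are minimizers of $\lambda$ on some hyperplane of the form $\{\langle\alpha,v\rangle=\mathrm{const}\}$ with $v$ in the open cone spanned by the $p_i$ together with $x$. The only thing to check is that $x':=dx-(d-1)c$ lies in that cone: this holds because $x$ is dual to $C$ (hence in $\bar C$, in fact pairs positively with every closed orbit), $-c$ pairs as $-c$, but $dx-(d-1)c=x+(d-1)(x-c)$ and $\langle\alpha,x-c\rangle=\|\alpha\|-\langle\alpha,c\rangle\ge 0$ for $\alpha\in\bar C$ since the closed orbit $c$ is carried by the suspended lamination (so $\langle\alpha,c\rangle\le\|\alpha\|$); thus $x'\in\bar C$ and pairs strictly positively with every point of $C$, which is all the Claim's proof actually uses. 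The rest of the Claim — the inductive dimension reduction down to a ray, and the final appeal to Matsumoto's strict concavity of $1/\log\lambda$ along rays (\cite{Ma}, \cite{McM}) — applies verbatim, giving the contradiction.

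The main obstacle, and the only place genuinely new work is needed, is establishing the branched-cover Thurston norm formula $\|(p^c)^*\alpha\|_{N^c}=\langle\alpha,\,dx-(d-1)c\rangle$ for $\alpha\in C$, together with the identification of the fibered cone and the transfer of the minimal point. The inequality $\le$ is easy: pull back a norm-minimizing fiber $T$ dual to $\alpha$; its preimage is a $d$-fold branched cover of $T$ branched at the $\langle\alpha,c\rangle$ intersection points, with Euler characteristic $d\chi(T)-(d-1)\langle\alpha,c\rangle$ by Riemann–Hurwitz. The reverse inequality needs that $(p^c)^*\alpha$ is fibered with this fiber as a genuine fiber — which holds because $c$ is a closed orbit of the suspension flow and the flow lifts to $N^c$, so the branched-cover fiber is exactly the fiber of the lifted fibration — and that no smaller-genus surface does better, which follows since $\lambda$-equality plus Fried's theory pins down the fiber up to isotopy within the fibered cone. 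Once this formula is in hand, everything else is a mechanical transcription of the drilling argument, so I would spend essentially all the effort on it and treat the number-theoretic part as already done.
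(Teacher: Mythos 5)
Your overall architecture is the paper's: identify the hyperplane $F^c\subset C$ whose $\lambda$-minimizer pulls back to $m_{F'}$, then rerun the algebraic-integer counting and the dimension-reduction Claim from Theorem \ref{drill}. But there is a concrete error in the one step you yourself flag as the crux: the Thurston norm formula has the wrong sign. Riemann--Hurwitz gives $\chi(\widetilde T)=d\chi(T)-(d-1)\langle\alpha,c\rangle$, as you write, but since $\|\alpha\|=-\chi(T)$ this translates to
$$\|(p^c)^*\alpha\|=-\chi(\widetilde T)=d\|\alpha\|+(d-1)\langle\alpha,c\rangle=\langle\alpha,\,d(c)x+(d(c)-1)c\rangle,$$
not $d\|\alpha\|-(d-1)\langle\alpha,c\rangle$. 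Branching increases $|\chi|$ relative to the unbranched cover (a surplus, just like drilling), it does not decrease it. The error propagates: your hyperplane $F^c=\{\langle\alpha,dx-(d-1)c\rangle=1\}$ is not the one on which $m_{F'}$ pulls back to the minimizer, and it is not parallel for two classes that are branched-covering equivalent in the paper's sense (which uses $d(c)x+(d(c)-1)c$, as you yourself quote), so your assertion that ``$A$ is constant on equivalence classes'' fails for the hyperplanes you actually defined. Moreover, the auxiliary claim you introduce to salvage positivity of $dx-(d-1)c$ on $\bar C$ --- that $\langle\alpha,c\rangle\le\|\alpha\|$ because $c$ is carried by the suspended lamination --- is false in general: a closed orbit of large period meets the fiber in arbitrarily many points, unconstrained by $|\chi|$ of the fiber.

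With the sign corrected everything you describe goes through, and in fact more easily: $d(c)x+(d(c)-1)c$ pairs strictly positively with $C$ simply because both $x$ and $c$ do (the latter by Fried's characterization of the fibered cone), so no extra argument is needed before invoking the Claim of Theorem \ref{drill}. The paper's proof is exactly this corrected version, with one additional ingredient you omit but which is harmless: it uses the $\mathbb{Z}_{d(c)}$ deck action on $N^c$ to see that $m_{F'}$ lies in $(F')^H=(p^c)^*(H^1(N;\mathbb{R}))\cap F'$, which is what justifies restricting attention to pullback classes in the first place.
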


\begin{proof}
Let $C$ be the fibered cone in $H^1(N;\mathbb{R})$ containing the dual of $[S]$ and $F$ be the corresponding fibered face. $F'$ and $C'$ are defined similarly for $[S^c]$.

Since $p^c:N^c\rightarrow N$ is a $d(c)$-sheet cyclic branched covering, we have an $H=\mathbb{Z}_{d(c)}$ action on $N^c$. As the regular covering case (Lemma \ref{cover}), we have that $H^1(N^c;\mathbb{R})^H=(p^c)^*(H^1(N;\mathbb{R}))$. Since the $H$-action fixes $[S^c]$, $C'$ and $F'$ are both invariant subsets of the $H$ action. Since the minimal point is unique, we have $m_{F'}\in (F')^H$.

 For primitive element $\alpha \in H^1(N;\mathbb{Z})\cap C$ with dual surface $S_{\alpha}$, the dual surface $S_{\alpha}^c$ of
$(p^c)^*(\alpha)$ is a $d(c)$-sheet cyclic branched cover of surface $S_{\alpha}$ and the monodromy on $S_{\alpha}^c$ is a lifting of
monodromy of $S_{\alpha}$. We have $\lambda(p^*(\alpha))=\lambda(\alpha)$, so the equality holds for any $\alpha \in C$.

One the other hand , by Riemann-Hurwitz formula,
$\| (p^c)^*(\alpha)\|=\langle \alpha, (d(c)-1)c+d(c)x\rangle$ for all
$\alpha \in H^1(N;\mathbb{Z})\cap C$. So it also holds for any $\alpha \in C$.

As in the drilling construction, $(C')^H=(p^c)^*(C)$. By considering Thurston norm, $((p^c)^*)^{-1}((F')^G)=\{\alpha \in C|\ \langle \alpha, (d(c)-1)c+d(c)x \rangle=1\}$, and we denote it by $F^c$. Since $\lambda((p^c)^*(\alpha))=\lambda(\alpha)$, the minimal point $m_{F^c}$ of the restriction of $\lambda(\cdot)$ on $F^c$ satisfies $(p^c)^*(m_{F^c})=m_{F'}$. So we need only to show that for all but infinitely many branched covering classes $c$ satisfying the assumption of the theorem, $m_{F^c}$ is irrational. The remaining part of the proof is same with the proof in Theorem \ref{drill}.
\end{proof}

\subsection{Infinitely Many Closed Orbit Classes}

To make the Drilling Theorem and Branched Covering Theorem really gives us some drilling class and branched covering class $c$ which gives irrational invariant, we need to show that, for any pseudo-Anosov map, there are infinitely many different drilling classes and branched covering classes (satisfying the condition in the Branched Covering Theorem). We have the following lemma which helps us to find enough closed orbits of suspension flow. The proof of this Lemma is quite tedious and possibly well-known for experts, but the author can not find a proper literature about it.

\begin{lem}\label{infty}
For any pseudo-Anosov map $\phi$ on surface $S$, \\
(a) there exists infinitely many different drilling classes in $N=M(S,\phi)$;\\
(b) there exists infinitely many different branched covering classes in $N=M(S,\phi)$ satisfy the condition of Branched Covering Theorem.
\end{lem}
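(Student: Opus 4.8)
The plan is to produce infinitely many closed orbits of the suspension flow whose homology classes are "spread out" enough in $H_1(N;\mathbb{Z})/\mathrm{Tor}$ to land in infinitely many drilling (resp.\ branched covering) equivalence classes. The starting observation is that the suspension flow on $N=M(S,\phi)$ is an Anosov-like flow transverse to the fiber $S$, and its periodic orbits are in bijection with periodic orbits of $\phi$ on $S$: an orbit going around $[S]$ exactly $k$ times corresponds to a point $x\in S$ with $\phi^k(x)=x$. Since $\phi$ is pseudo-Anosov, the number of periodic points of $\phi^k$ grows like $\lambda(\phi)^k$, so there is no shortage of primitive closed orbits; the real issue is controlling their homology classes. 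I would first recall (or prove by a short train-track/Markov-partition argument) that the homology classes of these orbits, after normalizing by the winding number $k$ around $[S]$, become equidistributed — or at least are not all eventually confined to finitely many lines through $-x$ in $H_1(N;\mathbb{Z})/\mathrm{Tor}$. Concretely, using a Markov partition for $\phi$ one writes each periodic orbit's homology class as a sum of "block" vectors along the coding; because $b_1(N)>1$ and the flow is genuinely $3$-dimensional, these block vectors span a subgroup of rank $\geq 2$ in the kernel of intersection with $[S]$ is false in general, but they do span enough of $H_1(N;\mathbb{Z})/\mathrm{Tor}$ that the classes $[c]$ cannot all satisfy a single linear dependence with $x$.

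For part (a): two orbits $c_1,c_2$ are drilling equivalent iff $[c_1]+x$ and $[c_2]+x$ are parallel in $H_1(N;\mathbb{Z})/\mathrm{Tor}$. So I must exhibit infinitely many orbits $c$ with pairwise non-parallel $[c]+x$. The strategy: pick a primitive closed orbit $c_0$ with $[c_0]+x\ne 0$ (this exists since $b_1>1$ forces most orbits to have $[c_0]+x\neq 0$; if every orbit had $[c_0]+x=0$ then every closed orbit would be homologous to $-x$, which is impossible once the flow has at least two closed orbits in distinct free homotopy classes representing distinct homology — and it does, by growth of periodic orbits). Then, by taking closed orbits that "wind around" $c_0$ differently — e.g.\ concatenating long stretches of $c_0$ with a fixed excursion, or using the shadowing/specification property of the flow to produce, for each $n$, an orbit $c_n$ whose homology is $n[c_0]$ plus a bounded error — one gets $[c_n]+x = n[c_0] + (\text{bounded}) + x$. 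For large $n$ these are pairwise non-parallel: if $\mu(n[c_0]+e_n+x)=m[c_0]+e_m+x$ for a scalar $\mu$, comparing the unbounded $[c_0]$-components forces $\mu\approx n/m$, and then the bounded-error terms cannot match for infinitely many pairs. Hence infinitely many drilling classes.

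For part (b): here $c_1,c_2$ are equivalent iff $d(c_1)x+(d(c_1)-1)[c_1]$ and $d(c_2)x+(d(c_2)-1)[c_2]$ are parallel, and one also needs $d(c)>1$ and $\gcd(d(c),\mathrm{Tor}(H_1(N)))=1$. I would first arrange that the orbits $c_n$ from part (a) can be chosen primitive with $[c_n]$ not primitive — e.g.\ by replacing $c_n$ with an orbit homologous to $p\cdot(\text{something})$ for a prime $p$ coprime to $|\mathrm{Tor}(H_1(N))|$; using the specification property one can prescribe the homology class modulo a fixed integer, so one can force $d(c_n)$ to equal such a prime $p_n$, with $p_n\to\infty$ along a subsequence. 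Then $d(c_n)x+(d(c_n)-1)[c_n]$ again has an unbounded, steadily-growing component (now governed by both $d(c_n)$ and $[c_n]$), and the same parallelism-comparison argument as in (a) shows only finitely many coincidences. The hypotheses $d(c)>1$ and $\gcd(d(c),\mathrm{Tor})=1$ are built into the construction of $p_n$, so these $c_n$ all satisfy the Branched Covering Theorem's requirements and lie in infinitely many equivalence classes.

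The main obstacle I anticipate is making the "controlled homology" step fully rigorous: I need a clean mechanism — the specification/shadowing property of the suspension flow of a pseudo-Anosov map, or equivalently the ability to freely concatenate admissible words in a Markov coding — to produce, for each target residue and each large winding number, an actual closed orbit realizing a prescribed homology class up to bounded error. This is standard for Axiom A / pseudo-Anosov suspension flows but requires some care to cite or reprove, which is presumably why the lemma's proof is described as "quite tedious"; once that mechanism is in place, the parallelism comparisons in (a) and (b) are elementary linear algebra over $H_1(N;\mathbb{Z})/\mathrm{Tor}$.
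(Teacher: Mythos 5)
Your overall strategy (realize many homology classes of closed orbits via a Markov coding or the specification property, then show the resulting classes fall into infinitely many equivalence classes by a linear-algebra comparison) is the same as the paper's, but the concrete argument in part (a) has a genuine gap. You build all your orbits from a single base orbit $c_0$ and a fixed excursion, so $[c_n]+x=n[c_0]+(e+x)+O(1)$ with $e$ essentially constant. Two such classes are parallel exactly when $(n-m)\,[c_0]\wedge(e+x)$ plus bounded terms vanishes; if $e+x$ happens to be proportional to $[c_0]$ (which nothing in your construction rules out), then \emph{all} of your orbits are drilling equivalent and the argument collapses. Your heuristic ``$\mu\approx n/m$ and then the bounded errors cannot match'' fails in exactly that degenerate case: taking $\mu=(m+s)/(n+s)$ for the appropriate $s$ matches everything identically. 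To repair this you need two closed orbits with linearly \emph{independent} homology classes (these exist because $b_1(N)\geq 2$ and, by Fried, the homology classes of periodic orbits span a cone dual to the fibered cone), and you must vary the proportion in which the two are concatenated. That is precisely the extra structure the paper's proof sets up with its orbits $d_1,d_2$ and the varying parameter $x\in\{0,\dots,kpd+nd+1\}$, followed by a totient-function counting argument to show the resulting directions cannot all collapse onto finitely many lines.

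Two further points. First, drilling classes are defined for \emph{primitive} closed orbits, and you never verify primitivity of the $c_n$; the paper forces it by arranging the homology class to have coprime coordinates (via the conditions $\gcd(x,kpd+nd+1)=1$ and the primality of $p$), and some such device is needed, since a word shadowed by specification could a priori be a proper power. Second, part (b) is asserted rather than proved: to get $d(c_n)=p_n$ you must produce a \emph{primitive} orbit whose homology class has gcd of coordinates exactly $p_n$, and prescribing the homology class modulo a fixed integer via specification does not by itself control that gcd, nor does it keep the orbit primitive. The paper instead fixes a single prime $q$ coprime to the torsion, traverses each of the two building-block orbits a multiple of $q$ times, and then argues separately that the resulting orbit cannot be a $q$-th power because the two blocks occur with incompatible multiplicities. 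Without these steps your proposal does not yet establish either part of the lemma, although the underlying plan is the right one.
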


\begin{proof}
Let $H_1(N;\mathbb{Z})/Tor=\mathbb{Z}[u]\oplus T$, here $u$ gives the $S^1$-direction and $T$ is the image of $H_1(S;\mathbb{Z})$. This decomposition will give us a coordinate of $H_1(N;\mathbb{Z})/Tor$. Let $x=(n_0,t_0)\in H_1(N;\mathbb{Z})/Tor$ ($n_0>0$) be the homology class dual with fibered face $F$ under the coordinate given about. Let $p:\tilde{N}\rightarrow N$ be the maximal free abelian cover and $\tilde{S}$ be one component of $p^{-1}(S)$.

Take a Markov partition (rectangle partition) $\mathcal{R}=\{R_i\}_{i=1}^k$ of surface $S$, the transition matrix $M_{k\times k}$ is defined by $M_{i,j}=|\phi^{-1}(int(R_j))\cap R_i|$. Then $M$ is a Perron-Frobenius matrix (see \cite{CB} and \cite{FLP} Expose 10), which means that for all $n\geq k$, all the entries of $M^n$ are positive integers. Furthermore, we can lift this Markov partition to a Markov partition on $\tilde{S}$ with $\tilde{\mathcal{R}}=T\cdot \{\tilde{R}_i\}_{i=1}^k$. Also take a lift of $\phi$ correspond to $u\in H_1(N;\mathbb{Z})/Tor$ and denote it by $\tilde{\phi}$. The transition matrix $\tilde{M}_{k\times k}$ is defined by $\tilde{M}_{i,j}=\sum |\tilde{\phi}^{-1}(int(\tilde{R}_j))\cap t\cdot \tilde{R}_i|\cdot t$. Then $\tilde{M}^{m}_{n,n}$ has a nonzero $t^{-1}$ term implies that $N$ has a closed orbit (possibly nonprimitive) with homology class $(m,t)$ which intersects $R_n$. This property allows us to use multiplication of matrix $\tilde{M}$ to study (possibly nonprimitive) closed orbits of $N$.

Let $Cone(\phi)\subset H_1(N;\mathbb{R})$ be the smallest convex closed cone containing all the homology classes of primitive periodic orbits. In \cite{FLP} Expose 14, Fried showed that for any $\alpha\in H^1(N;\mathbb{R})$, $\alpha$ lies in the fibered cone $C$ if and only if $\langle \alpha,x\rangle\geq 0$ for any $x\in Cone(\phi)$. Since we assume $b_1(N)\geq 2$, $\phi$ has two closed orbits $d_1,d_2'$ with linear independent homology classes. Suppose $d_1\cap R_1\ne \emptyset$ and $d_2'\cap R_2 \ne \emptyset$.

Since $\tilde{M}$ is Perron-Fronbenius, all the entries of $\tilde{M}^k$ are nonzero, so $\tilde{M}^k_{1,2},\tilde{M}^k_{2,1}\ne 0$. By precompose and postcompose $(d_2')^t$ with two fixed paths given by $\tilde{M}^N_{1,2},\tilde{M}^N_{2,1}$, we can get a possibly nonprimitive closed orbit $d_2$ which intersects $R_1$ and we can choose $t$ to be large enough such that $d_1$ and $d_2$ are linear independent. We can further take powers of $d_1$ and $d_2$ to make them share the same coefficient on $u$-component. Without changing the symbol, we get two possibly nonprimitive closed orbits $d_1=(m,t_1')$ and $d_2=(m,t_2')$. So $\tilde{M}^m_{1,1}$ has both $t_1'^{-1}$ term and $t_2'^{-1}$ term and $t_1' \ne t_2'$.

Let $d$ be the largest integer such that $(t_1'-t_2')/d$ is an integer class. Since $\tilde{M}$ is Perron-Frobenius, there exists prime number $p$ such that $\tilde{M}^p_{1,1}$ has two terms $t_1^{-1}$ and $t_2^{-1}$ such that $t_1-t_2=t_1'-t_2'$, and $p>max\{k+m,n_0,d,|Tor(H_1(N;\mathbb{Z}))|\}$. This gives closed orbits $c_1=(p,t_1)$ and $c_2=(p,t_2)$. Since $p>d$ and $p$ is prime, there exists positive integer $n$ such that $p|nd+1$.

Proof of (a): For any positive integer $k$, $\tilde{M}^{(kpd+nd+1)p}_{1,1}$ gives possibly nonprimitive closed orbits $((kpd+nd+1)p,(kpd+nd+1)t_1+x(t_2-t_1))$, here $x$ is chosen from $\{0,\cdots,kpd+nd+1\}$. Now we choose $x$ such that $gcd(x,kpd+nd+1)=1$. Since $p|kpd+nd+1$, $gcd(x,(kpd+nd+1)p)=1$. For any prime factor of $(kpd+nd+1)p$, it is factor of every coordinate of $(kpd+nd+1)t_1$, but is not a factor of some coordinated of $x(t_2-t_1)$ since $gcd(d,(kpd+nd+1))=1$. So $N$ has primitive closed orbits with homology $c=((kpd+nd+1)p,(kpd+nd+1)t_1+x(t_2-t_1))$ for any positive integer $k$ and $x\in \{0,\cdots,kpd+nd+1\}$ with $gcd(x,kpd+nd+1)=1$.

Now we need only to show there are infinitely many different drilling classes herein. For $c+x=((kpd+nd+1)p+n_0,(kpd+nd+1)t_1+x(t_2-t_1)+t_0)$, the first coordinate can be rewritten as $kp^2d+(npd+p+n_0)$. Let $d'=gcd(p^2d,npd+p+n_0)$, since $p>n_0$, we have $d'\leq d<p$. By Dirichlet Theorem on arithmetic progressions, there are infinitely many positive integers $k_i$ such that $(k_ip^2d+(npd+p+n_0))/d'$ is prime number. Suppose there are only finitely many pairwise independent classes for all choice of $((k_ipd+nd+1)p+n_0,(k_ipd+nd+1)t_1+x(t_2-t_1)+t_0)$ with $x\in \{0,\cdots,k_ipd+nd+1\}$ and $gcd(x,k_ipd+nd+1)=1$. Then for $i$ large enough, prime number $(k_ip^2d+(npd+p+n_0))/d'$ must be a factor of all coordinates of $((k_ipd+nd+1)p+n_0,(k_ipd+nd+1)t_1+x(t_2-t_1)+t_0)$.

Since we have $\phi(k_ipd+nd+1)$ choices of $x$, here $\phi(\cdot)$ is Euler's totient function, there exists $x_1$ and $x_2$ coprime with $k_ipd+nd+1$ and $0<x_1-x_2\leq \frac{k_ipd+nd+1}{\phi(k_ipd+nd+1)-1}$. Since $(k_ip^2d+(npd+p+n_0))/d'$ is a factor of all coordinates of both $((k_ipd+nd+1)p+n_0,(k_ipd+nd+1)t_1+x_1(t_2-t_1)+t_0)$ and $((k_ipd+nd+1)p+n_0,(k_ipd+nd+1)t_1+x_2(t_2-t_1)+t_0)$, it is a factor of all coordinate of $(x_1-x_2)(t_1-t_2)$. Let $D$ be the upper bound of the norm of all coordinates of $t_1-t_2=t_1'-t_2'$, then we have $k_ipd+nd+1<\frac{k_ip^2d+npd+p+n_0}{d'}\leq D(x_1-x_2)\leq D \frac{k_ipd+nd+1}{\phi(k_ipd+nd+1)-1}$, so $\phi(k_ipd+nd+1)-1\leq D$. Since $\{k_ipd+nd+1\}$ is an integer sequence going to infinity, this is absurd. So we have infinitely many different drilling equivalent classes.

Proof of (b): Choose a prime number $q$ such that $q$ is coprime with both $|Tor(H_1(N;\mathbb{Z}))|$ and $p$. For any positive integer $k$, $\tilde{M}^{(kpd+nd+1)pq}_{1,1}$ gives possibly nonprimitive closed orbit $c$ with homology $((kpd+nd+1)pq,(kpd+nd+1)qt_1+xq(t_2-t_1))$. Here $x$ is chosen from $\{0,\cdots,kpd+nd+1\}$ and coprime with $xpd+nd+1$. As the proof of (a), we can show $d(c)=q$, which is coprime with $|Tor(H_1(N;\mathbb{Z}))|$.

Furthermore, we can take the closed orbit $c$ to go along $c_2$ for $xq$ times first, and then go along $c_1$ for $(kpd+nd+1-x)q$ times. Since $d(c)=q$ is a prime number, $c$ is a nonprimitive closed orbit implies that $c$ repeats $q$ times of a primitive closed orbit. However, it contradicts with the choice of $c$ and the fact that $x\ne 0$ and $x\ne kpd+nd+1$. So we can choose $c$ to be a primitive closed orbit, and it satisfies the condition in Branched Covering Theorem.

Now $d(c)x+(d(c)-1)c=((kpd+nd+1)pq(q-1)+qn_0,(kpd+nd+1)q(q-1)t_1+xq(q-1)(t_2-t_1)+qt_0)$. The first term is $q[(p^2d(q-1))k+((nd+1)p(q-1)+n_0)]$. Let $d'=gcd(p^2d(q-1),(nd+1)p(q-1)+n_0)$ then $d'\leq d(q-1)<p(q-1)$. By Dirichlet Theorem again, there are infinitely many $k_i$ such that $\frac{(p^2d(q-1))k_i+((nd+1)p(q-1)+n_0)}{d'}$ is prime number. Then the following proof is same with the proof of (a).
\end{proof}

Although we do not have an explicit example in hand yet, Theorem \ref{drill} , Theorem \ref{branch} and Lemma \ref{infty} imply that there exists pseudo-Anosov map $(S,\phi)$ (on either closed surface or surface with boundary) such that $A(S,\phi)\ne \mathbb{Q}$, which answers McMullen's question. Moreover, it also shows that pseudo-Anosov map with irrational invariant appears as a general phenomenon.

\section{An Explicit Example}\label{example}

In this section, we will give some explicit examples $(S,\phi)$ with $A(S,\phi)\ne \mathbb{Q}$.

\subsection{An Example and its Closed Orbits}
We will study an explicit example in this subsection. For this example, all the possible homology classes can be realized by a primitive closed orbit. In the next subsection, we will study some simple drilling classes and branched covering classes for this example, and apply an alternative method of showing irrationality.

Let $T_c$ be the left hand Dehn-twist along simple closed curve $c\subset S$. Using Dehn-twists, Penner gave a construction of pseudo-Anosov maps in \cite{Pe}.

\begin{thm}\label{penner}(\cite{Pe} Theorem 3.1)
Suppose $\{a_i\}_{i=1}^m$ and $\{b_j\}_{j=1}^n$ be two families of disjoint essential simple closed curves on surface $S$, such that $\{a_i\}_{i=1}^m$ intersect $\{b_j\}_{j=1}^n$ essentially and every component of $S\setminus ((\cup a_i)\cup(\cup b_j))$ is a disk or anuulus containing a boundary component of $S$. Let $\mathcal{D}(a^+,b^-)$ be the semigroup generated by $T_{a_i}$ and $T_{b_j}^{-1}$. If every $T_{a_i}$ and $T_{b_j}^{-1}$ appear in the presentation of some $\phi \in \mathcal{D}(a^+,b^-)$, then $\phi$ is a pseudo-Anosov map, and an invariant bigon track can be constructed explicitly.
\end{thm}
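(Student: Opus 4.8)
The plan is to recover Penner's construction from the combinatorics of a single auxiliary bigon track together with the Perron--Frobenius theorem. First I would build the bigon track $\tau=\tau(\{a_i\},\{b_j\})$ directly from the two families: isotope them so that $(\cup a_i)\cup(\cup b_j)$ has only transverse double points, and at each double point perform the \emph{two-smoothing} resolution that replaces the crossing by a pair of arcs which are tangent at two points and bound a small bigon, one arc running along the $a$-curve and the other along the $b$-curve through that point. By construction every $a_i$ and every $b_j$ is carried by $\tau$, so $\tau$ carries a full-dimensional cone $V(\tau)$ of transverse measures; and the hypothesis that every complementary region of $(\cup a_i)\cup(\cup b_j)$ is a disk or a boundary annulus forces every complementary region of $\tau$ to be a disk, a bigon, or an annulus carrying a boundary component, i.e.\ $\tau$ is a \emph{filling} bigon track. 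This $\tau$ is the explicit invariant track claimed in the statement.

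Next I would check that each generator carries $\tau$. A positive Dehn twist $T_{a_i}$ drags carried curves around $a_i$, and a negative twist $T_{b_j}^{-1}$ drags them around $b_j$, precisely in the direction in which the bigons sitting at the crossings can absorb the new intersections; the opposite choice of signs would push curves off $\tau$. Thus $T_{a_i}(\tau)\prec\tau$ and $T_{b_j}^{-1}(\tau)\prec\tau$, and each generator induces on the branch-weight space a non-negative integral transition matrix equal to the identity plus a non-negative matrix supported near the twisting curve. Consequently any $\phi\in\mathcal{D}(a^+,b^-)$ in which every $T_{a_i}$ and every $T_{b_j}^{-1}$ occurs acts on $V(\tau)$ by the corresponding product matrix $M_\phi$, which is non-negative and integral.

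The core of the argument is then the Perron--Frobenius step. Using that the $a$-curves and $b$-curves fill and meet essentially, one shows that applying all the generators mixes the branch weights: the directed graph recording which weights feed into which is strongly connected and aperiodic, so $M_\phi$ is primitive. Perron--Frobenius gives a unique projective fixed vector $\mu^+\in V(\tau)$ with $M_\phi\mu^+=\lambda\mu^+$, and $\lambda>1$ since applying $\phi$ strictly increases the total weight of some carried curve; this $\mu^+$ is a measured lamination $\mathcal{L}^+$ with $\phi\cdot(\mathcal{L}^+,\mu^+)=(\mathcal{L}^+,\lambda\mu^+)$. Carrying out the same construction for $\phi^{-1}$, equivalently for the bigon track obtained by interchanging the roles of the $a$- and $b$-curves, produces a transverse measured lamination $\mathcal{L}^-$ with $\phi\cdot(\mathcal{L}^-,\mu^-)=(\mathcal{L}^-,\lambda^{-1}\mu^-)$. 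Since $\mathcal{L}^\pm$ are carried by filling bigon tracks built from $(\cup a_i)\cup(\cup b_j)$, they are transverse and jointly fill $S$, so by Thurston's characterization of pseudo-Anosov maps (see \cite{FLP}, \cite{CB}) the pair $(\mathcal{L}^\mp,\mu^\mp)$ is the stable/unstable foliation data exhibiting $\phi$ as pseudo-Anosov with dilatation $\lambda$.

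The step I expect to be the main obstacle is the primitivity of $M_\phi$: translating ``$\{a_i\}$ and $\{b_j\}$ fill and intersect essentially, and every generator is used'' into strong connectivity and aperiodicity of the transition graph requires a careful branch-by-branch analysis of how each twist matrix moves weight, and this is really the combinatorial heart of the theorem. A secondary subtlety is checking rigorously that the Perron--Frobenius lamination genuinely fills $S$, rather than collapsing onto a proper subsurface or becoming a simple closed curve, which again comes down to the filling hypothesis on $(\cup a_i)\cup(\cup b_j)$ and to $\lambda>1$.
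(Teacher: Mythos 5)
This theorem is quoted verbatim from Penner (\cite{Pe}, Theorem 3.1); the paper offers no proof of it, so there is no internal argument to compare against. Measured against Penner's original proof, your outline reproduces its structure faithfully: smooth the crossings of $(\cup a_i)\cup(\cup b_j)$ into a bigon track $\tau$, verify that each generator carries $\tau$ with incidence matrix of the form identity plus a nonnegative matrix, apply Perron--Frobenius to the word matrix, and feed the resulting pair of invariant measured laminations into the standard characterization of pseudo-Anosov maps. Indeed the paper implicitly relies on exactly this combinatorics later: the matrices $M^v$ and $M_w$ in Proposition \ref{Phi} are your ``identity plus nonnegative'' twist matrices, and Lemma \ref{nice1} punctures the bigons to upgrade $\tau$ to a genuine train track.

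As a standalone proof, however, the two steps you flag are genuine gaps, not routine verifications. For primitivity you must actually exhibit the lemma that drives everything: under $T_{a_i}$ the weight of each branch lying along $a_i$ picks up the weights of the $b$-branches crossing $a_i$ (and symmetrically for $T_{b_j}^{-1}$), so the transition digraph is essentially bipartite between $a$-branches and $b$-branches, and strong connectivity requires both that every generator occurs in the word \emph{and} that the intersection pattern of the two families is connected --- which is exactly where the hypotheses that the curves meet essentially and fill enter. Without writing down these matrices, ``strongly connected and aperiodic'' is an assertion. Second, the claim that $\mathcal{L}^+$ and $\mathcal{L}^-$ are transverse and jointly fill does not follow merely from each being carried by a filling bigon track; one needs that the Perron--Frobenius eigenvector is strictly positive on every branch (so that $\mathcal{L}^+$ meets every $a_i$ and every $b_j$) together with a separate check that the two smoothed tracks intersect efficiently. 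Two smaller points: the Perron--Frobenius eigenvector must be produced inside the cone of weights satisfying the switch conditions (apply the theorem to the restriction of $M_\phi$ to that invariant cone, not to all of $\mathbb{R}^E$); and a priori the construction for $\phi^{-1}$ yields an eigenvalue $\lambda'>1$ that can only be identified with $\lambda$ after pseudo-Anosovity is established, so the criterion should be invoked in a form that tolerates two a priori distinct stretch factors.
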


Let $\phi=T_{a_3}\cdot T_{b_2}^{-1}\cdot T_{b_1}^{-1}\cdot T_{a_2}\cdot T_{a_1}$ be a surface self-homeomorphism on $S$ with $a_i$, $b_j$ as shown in Figure 2. By Theorem \ref{penner}, $T$ is a pseudo-Anosov map. Since $S$ is a closed genus $2$ surface, $A(S,\phi)=\mathbb{Q}$ by Corollary \ref{special}. Although $(S,\phi)$ itself is not an object we are interested in, we will apply our drilling and branched covering construction to it.

\begin{center}
\psfrag{a}[]{\color{blue} $a_1$} \psfrag{b}[]{\color{blue} $a_2$} \psfrag{c}[]{\color{blue} $a_3$}
\psfrag{d}[]{\color{red} $b_1$} \psfrag{e}[]{\color{red} $b_2$} \psfrag{S}[]{$S$} \psfrag{f}[]{$\gamma$}
\includegraphics[width=4in]{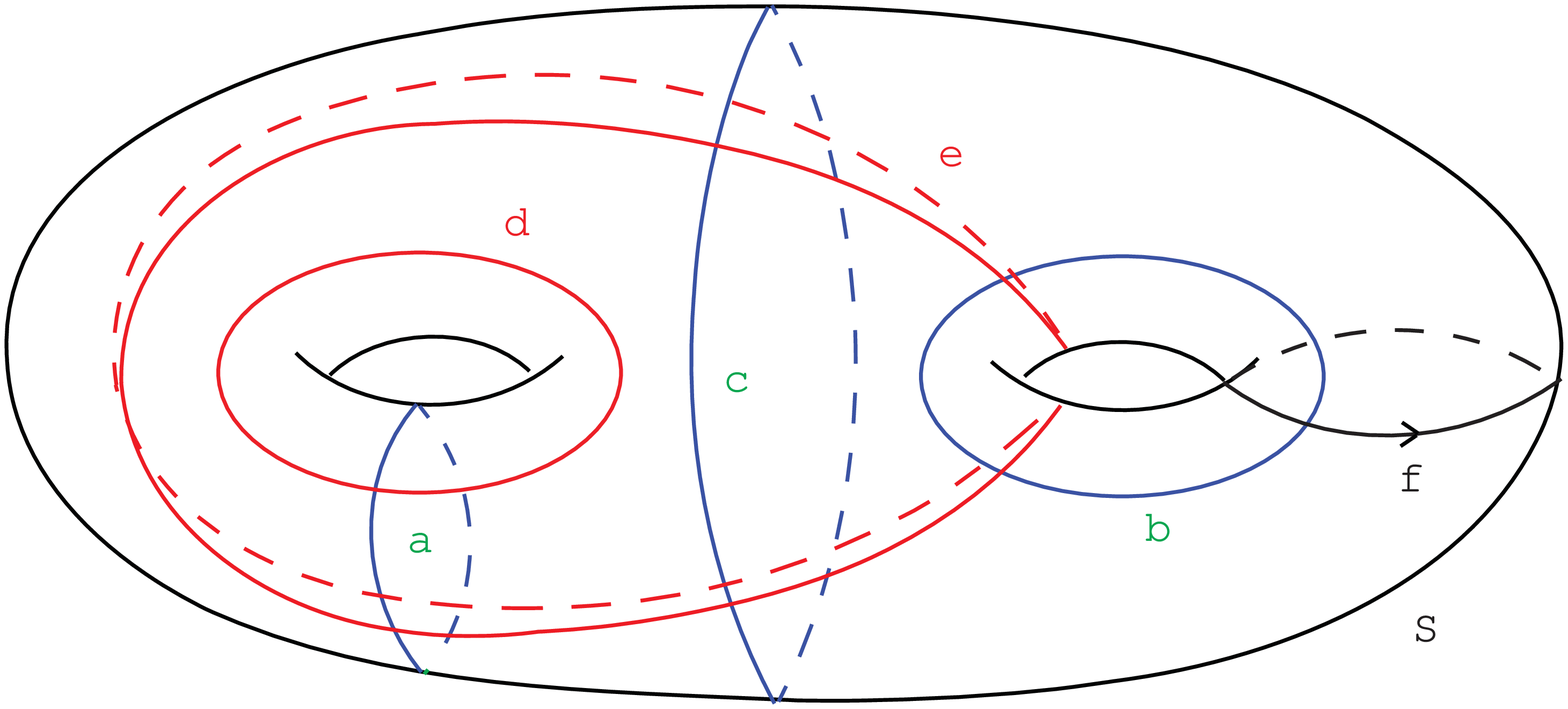}
\vskip 0.5 truecm
 \centerline{Figure 2}
\end{center}

Let $N=M(S,\phi)$ be the mapping torus with $p:N\rightarrow S^1$. It is easy to check that
$H_1(N;\mathbb{Z})=\mathbb{Z}^2=\mathbb{Z}[u] \oplus \mathbb{Z}[t]$, here $p_*(u)$ generates $H_1(S^1;\mathbb{Z})$ and $u\cap [S]=1$, while $t$ is presented by curve $\gamma$ in Figure 2.

We can cut the surface $S$ along curves $a,b,c,d$ in Figure 3 (a) to get an octagon representation of $S$ as in Figure 3 (b). The twisting curves $a_i$, $b_j$ are also shown in Figure 3 (b). The oriented curve $d$ is homologous to $\gamma$ in Figure 2, thus presents the homology class $t$.

\begin{center}
\psfrag{a}[]{$a$} \psfrag{b}[]{$b$} \psfrag{c}[]{$c$} \psfrag{d}[]{$d$} \psfrag{e}[]{$a$} \psfrag{f}[]{$a$}\psfrag{g}[]{$b$} \psfrag{h}[]{$b$} \psfrag{i}[]{$c$} \psfrag{j}[]{$c$} \psfrag{k}[]{$d$} \psfrag{l}[]{$d$} \psfrag{m}[]{(a)} \psfrag{n}[]{(b)} \psfrag{o}[]{\color{blue} $a_1$} \psfrag{p}[]{\color{blue} $a_2$} \psfrag{q}[]{\color{blue} $a_3$} \psfrag{r}[]{\color{blue} $a_3$} \psfrag{s}[]{\color{blue} $a_3$} \psfrag{t}[]{\color{blue} $a_3$} \psfrag{u}[]{\color{red} $b_1$}
\psfrag{v}[]{\color{red} $b_2$} \psfrag{w}[]{\color{red} $b_2$} \psfrag{x}[]{\color{red} $b_2$}
\psfrag{y}[]{\color{red} $b_2$} \psfrag{z}[]{\color{red} $b_2$} \psfrag{aa}[]{\color{red} $b_2$}
\includegraphics[width=5in]{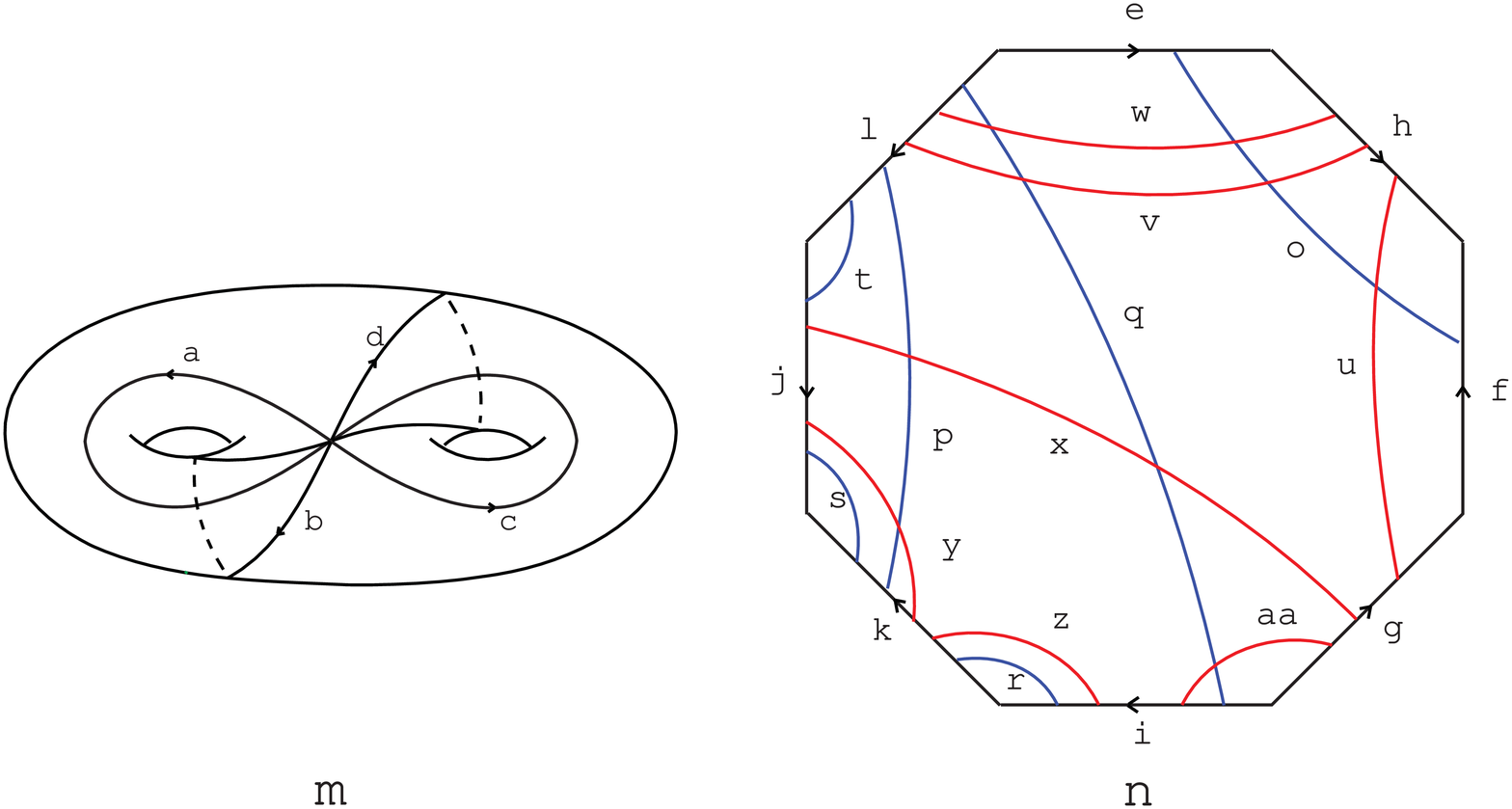}
\vskip 0.5 truecm
 \centerline{Figure 3}
\end{center}

By running Bestvina-Handel's algorithm (\cite{BH}), we get a graph $G\subset S$ in Figure 4 (a), which is the spine of a {\it fibered surface} carrying $\phi$, such that the induced map $\hat{\phi}:G\rightarrow G$ is {\it efficient} (see \cite{BH} for terminology show up here). We also get an invariant train track $\tau$ as shown in Figure 4 (b).

\begin{center}
\psfrag{a}[]{$c$} \psfrag{b}[]{$d$} \psfrag{v}[]{$v$}
\psfrag{w}[]{$w$} \psfrag{x}[]{$x$} \psfrag{y}[]{$y$} \psfrag{z}[]{$z$}
\psfrag{c}[]{(a)} \psfrag{d}[]{(b)}
\includegraphics[width=5in]{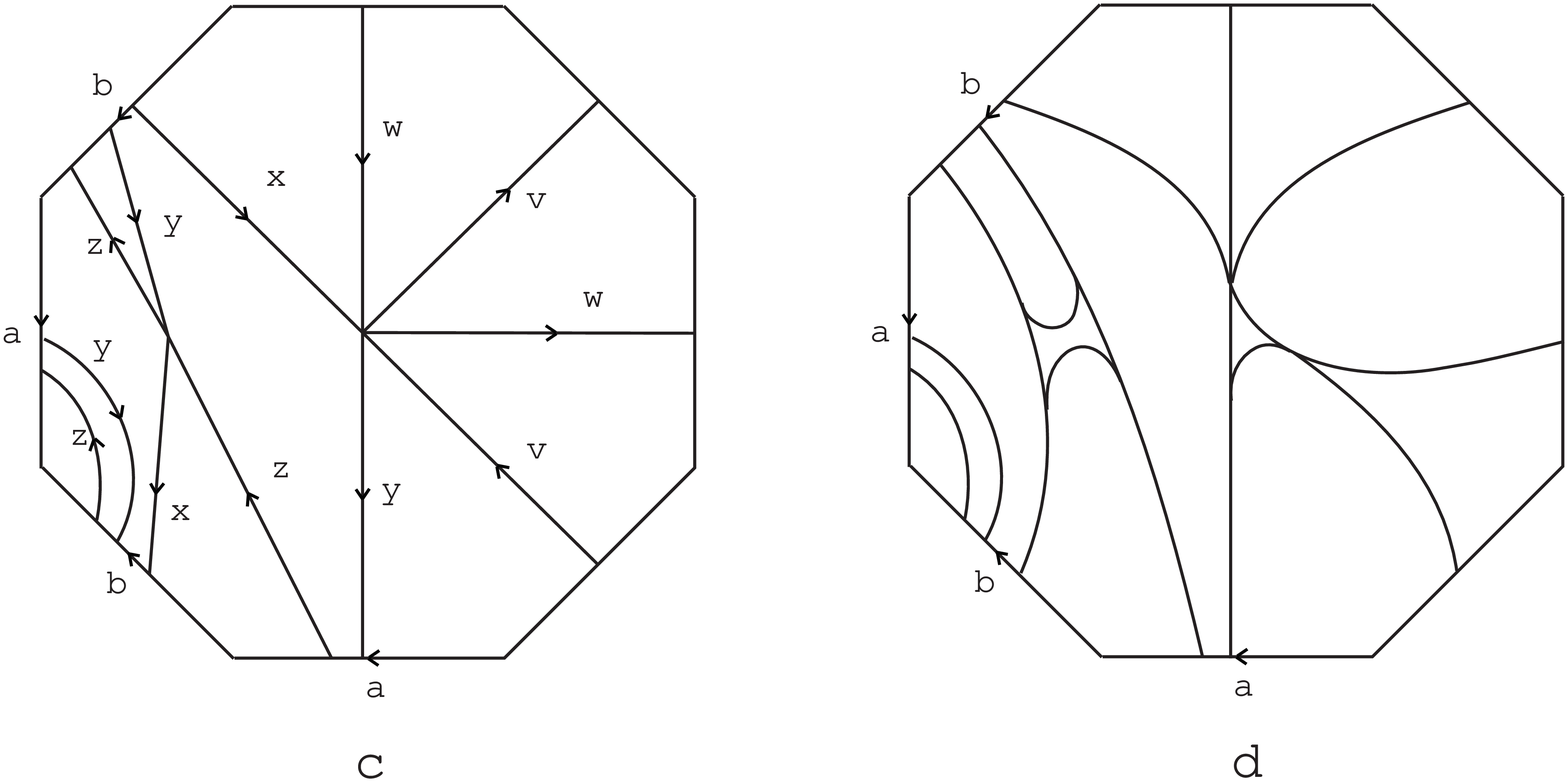}
\vskip 0.5 truecm
 \centerline{Figure 4}
\end{center}

Moreover, we can thicken each edge of $G$ to get a Markov partition of $\phi$ as in \cite{BH}.

Let $\bar{a}$ be the inverse of oriented edge $a$, then the induced map on graph $\hat{\phi}:G\rightarrow G$ is as the following:
$$
\left\{ \begin{array}{l}
         v \rightarrow vy\bar{z}\bar{y}\bar{x}zxyzxyz\bar{y}\bar{x}\bar{z}xyz\bar{y}\bar{v} y\bar{z}\bar{y}\bar{x}\bar{z}\bar{y}\bar{x}\bar{z}xyz\bar{y}v y\bar{z}\bar{y}\bar{x}zxy\bar{z}\bar{y}\bar{x}\bar{z}\bar{y}\bar{x}\bar{z}xyz\bar{y}\bar{v} y\bar{z}\bar{y}\bar{x}zxyzxyz\bar{y}\bar{w}v^2, \\
         w \rightarrow \bar{v}w y\bar{z}\bar{y}\bar{x}\bar{z}\bar{y}\bar{x}\bar{z}xyz\bar{y}v y\bar{z}\bar{y}\bar{x}zxyzxyz\bar{y}\bar{x}\bar{z}xyz\bar{y}\bar{v} y\bar{z}\bar{y}\bar{x}zxyzxyz\bar{y}v y\bar{z}\bar{y}\bar{x}zxy\bar{z}\bar{y}\bar{x}\bar{z}\bar{y}\bar{x}\bar{z}xyz\bar{y}\bar{v}, \\
         x \rightarrow xyz\bar{y}v y\bar{z}\bar{y}\bar{x}zxy\bar{z}\bar{y}\bar{x}\bar{z}\bar{y}\bar{x}\bar{z}xyz\bar{y}\bar{v}, \\
         y \rightarrow y\bar{z}\bar{y}\bar{x}zxyzxyz\bar{y}\bar{x}\bar{z}xyz\bar{y}\bar{v} y\bar{z}\bar{y}\bar{x}\bar{z}\bar{y}\bar{x}\bar{z}xyz\bar{y}v y\bar{z}\bar{y}\bar{x}zxy, \\
         z \rightarrow zxyz.
\end{array} \right.
$$

Let $\tilde{S}$ be one component of the lift of $S$ in the maximal abelian cover $\tilde{M}$, and $\tilde{\phi}$ be the lift of $\phi$. Since $H_1(N;\mathbb{Z})=\mathbb{Z}[u]\oplus \mathbb{Z}[t]$ (take $t=[d]$), $\tilde{S}$ is obtained by cutting $S$ along $c$, then paste $\mathbb{Z}$-copies together along $c$. Let $\tilde{G}$ be the preimage of $G$ in $\tilde{S}$, then an abstract picture of $\tilde{G}$ is shown in Figure 5. 

\begin{center}
\psfrag{a}[]{$t^{-1}z$} \psfrag{b}[]{$t^{-1}y$} \psfrag{c}[]{$x$} \psfrag{d}[]{$t^{-1}v$} \psfrag{e}[]{$t^{-1}w$}
\psfrag{f}[]{$z$} \psfrag{g}[]{$y$} \psfrag{h}[]{$tx$} \psfrag{i}[]{$v$} \psfrag{j}[]{$w$}
\psfrag{k}[]{$tz$} \psfrag{l}[]{$ty$} \psfrag{m}[]{$t^2x$} \psfrag{n}[]{$tv$} \psfrag{o}[]{$tw$}
\includegraphics[width=5.5in]{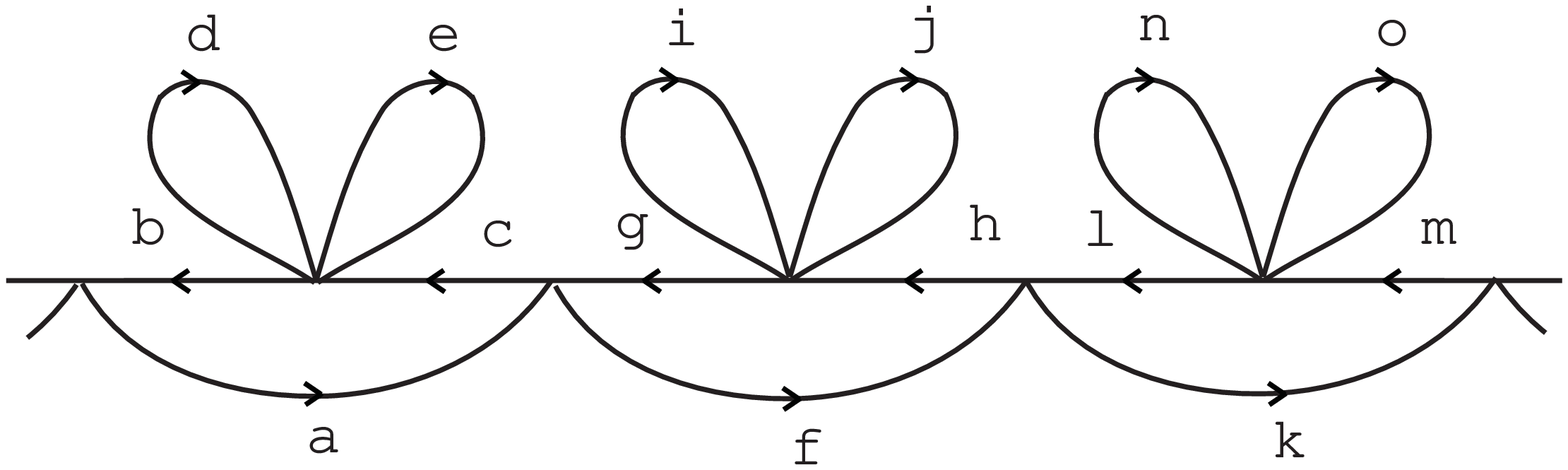}
\vskip 0.5 truecm
 \centerline{Figure 5}
\end{center}

Since we do not need to keep track the path of $\hat{\tilde{\phi}}:\tilde{G}\rightarrow \tilde{G}$ in the following work, we will write the formula of $\hat{\tilde{\phi}}$ in addition form, but not as composition of paths. We will also omit the orientation of paths.
Using the formula of $\hat{\phi}$ and Figure 5, we get the following transition matrix for $\hat{\tilde{\phi}}:\tilde{G}\rightarrow \tilde{G}$. Let $x_1,\cdots,x_5$ be $v,w,x,y,z$ respectively, then the entry $m_{i,j}$ is the sum of edges in $\mathbb{Z}[t](\hat{\tilde{\phi}}(x_j))$ collapsing to $x_i$.

$$M(t)=
\left( \begin{array}{ccccc}
t+4+t^{-1} &  t+3+t^{-1}&  t+1 & 1+t^{-1} & 0 \\
1 & 1 & 0 & 0& 0\\
2t+7+6t^{-1}+t^{-2} & 2t+7+6t^{-1}+t^{-2} & t+4+t^{-1} & 3+6t^{-1}+t^{-2} & t^{-1} \\
2t^2+9t+10+3t^{-1} & 2t^2+9t+10+3t^{-1} & t^2+5t+3 & 3t+9+3t^{-1} & 1\\
2t^2+9t+8+t^{-1} & 2t^2+9t+8+t^{-1} & t^2+5t+1 & 3t+8+t^{-1} & 2
 \end{array} \right).
$$

In Figure 4 (b), for the invariant train track $\tau$, there are seven switches and seven branches other than $v,w,x,y,z$, and $\phi$ (also $\tilde{\phi}$) fixes all of them. So the contribution of seven switches to $det(uI-P_V(t))$ cancels the contribution of seven branches to $det(uI-P_E(t))$. Then we have $\Theta_F(u,t)=\frac{det(uI-P_E(t))}{det(uI-P_V(t))}=det(uI-M(t))$. By computing the characteristic polynomial of $M(t)$, up to a unit in $\mathbb{Z}[H_1(N;\mathbb{Z})/Tor]$, we get:
\begin{equation}\label{Teichmuller}
\Theta_F(u,t) =(u-1)(u^2-(5t+19+5t^{-1})u+(14t+48+14t^{-1})-(5t+19+5t^{-1})u^{-1}+u^{-2}).
\end{equation}

Let $(\alpha_1,\alpha_2)$ be a basis of $H^1(N,\mathbb{R})$ dual with $(u,t)$, we have $\alpha_1=[S]$. Then by Theorem \ref{cone} and formula (\ref{Teichmuller}), we have that $C=\mathbb{R}_+\cdot F=\mathbb{R}_+\cdot D=\{x_1\alpha_1+x_2\alpha_2|\  x_1>|x_2|\}.$ For any closed orbit $c$ of the suspension flow of $N=M(S,\phi)$, we have that for any $\alpha \in C$, $\langle \alpha, c \rangle > 0$. So $c=au+bt$ with $a\in \mathbb{Z}_+, b\in \mathbb{Z}$ and $a\geq |b|$.

\begin{prop}
For any homology class $au+bt$ with $a\in \mathbb{Z}_+, b\in \mathbb{Z}$ and $a\geq |b|$, there exists a primitive closed orbit $c$ in $N=M(S,\phi)$, such that $[c]=au+bv$.
\end{prop}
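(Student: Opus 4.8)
The plan is to read the required homology classes off the transition matrix $M(t)$ of the invariant train track of $\phi$ as legal loops, and then to invoke the correspondence between such loops and closed orbits of the suspension flow that was used in the proof of Lemma~\ref{infty}. The key observation that makes this example work is that the diagonal entry of $M(t)$ in the row and column of the edge $y$ (the fourth basis element) is
$$M(t)_{4,4}=3t+9+3t^{-1},$$
so a \emph{single} diagonal entry already supports all three monomials $t^{-1},\,t^{0},\,t^{+1}$. Since every entry of $M(t)$ is a Laurent polynomial with nonnegative coefficients, the $(4,4)$-entry of $\tilde M(t)^{a}$ dominates $\bigl(3t+9+3t^{-1}\bigr)^{a}$ coefficientwise — the "stay at $R_4$ for $a$ steps" term contributes exactly this much, and nothing cancels. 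Expanding the multinomial, the coefficient of $t^{m}$ in $\bigl(3t+9+3t^{-1}\bigr)^{a}$ is strictly positive for every integer $m$ with $|m|\le a$: writing $m=i-k$ one may take $i=m,\ k=0,\ j=a-m$ when $m\ge 0$ and $i=0,\ k=-m,\ j=a+m$ when $m<0$, both legitimate precisely because $a\ge|m|$. Taking $m=\mp b$, we conclude that $\tilde M(t)^{a}_{4,4}$ has a nonzero coefficient of $t^{\mp b}$ whenever $a\ge 1$ and $|b|\le a$, and by the correspondence recorded in the proof of Lemma~\ref{infty} this produces a (possibly nonprimitive) closed orbit of the suspension flow through $R_4$ with homology class $au+bt$. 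Note that the hypothesis $a\ge|b|$ is exactly what this argument needs, and that — unlike in the general situation of Lemma~\ref{infty} — there is no loss of a universal constant, since one diagonal entry already spans the full range of $t$-exponents.

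It remains to upgrade this to a primitive closed orbit. If $\gcd(a,b)=1$ there is nothing to do, because a nonprimitive orbit has homology divisible by its multiplicity. When $d=\gcd(a,b)>1$ I would instead build the orbit directly as an aperiodic cyclic word in the legal self-transitions at $R_4$: there are (counted geometrically, from the coefficients $9$ and $3$ of $M(t)_{4,4}$) at least $9$ distinct such transitions of trivial $t$-translation and at least $3$ of each translation $t^{\pm1}$, and a primitive closed orbit of homology $au+bt$ corresponds to a non-periodic cyclic sequence of $a$ of them whose translations sum to the prescribed value. Concretely, use $|b|$ transitions of the appropriate translation $t^{\mp1}$ (sign dictated by $b$) together with $a-|b|\ge 0$ transitions of trivial translation, and arrange that one chosen transition, of a fixed type $A$, occurs exactly once in the cyclic word; this is possible because each translation-type is realized by at least two distinct transitions, so a "marker" can always be inserted (in particular also in the degenerate case $a=|b|$, where the word consists only of $t^{\mp1}$-transitions). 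A cyclic word in which some letter occurs exactly once is not a proper power, so the associated symbolic orbit is aperiodic, hence the closed orbit is primitive while still carrying homology $au+bt$.

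The delicate point — and the step I expect to need the most care in the write-up — is the passage from a legal (aperiodic) symbolic loop in the train track to an honest (primitive) closed orbit of the flow: one must know that the chosen loops are not among the finitely many degenerate ones trapped on singular leaves, and that aperiodicity of the symbol sequence corresponds to primitivity of the orbit. This is precisely the bookkeeping already implicit in the statement invoked in Lemma~\ref{infty}, and the slack in the coefficients of $M(t)_{4,4}$ (namely $9$ and $3$ rather than $1$) leaves ample room to choose loops avoiding any degeneracy and to place the marker transition $A$; so no new idea is required beyond careful combinatorics, and the proof should be short.
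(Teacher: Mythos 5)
Your proposal is correct and follows essentially the same route as the paper: both arguments hinge on the entry $M(t)_{4,4}=3t+9+3t^{-1}$ realizing all three translations $t^{-1},t^{0},t^{1}$ as self-transitions of $R_4$, use the Markov-partition correspondence to realize any admissible itinerary by a periodic point, and force primitivity by choosing an itinerary that cannot have smaller period. The only cosmetic difference is that the paper splits into a generic case (monotone-then-constant itinerary) and the boundary cases $au,\,au\pm at$ (marker transition), whereas you handle all cases uniformly with the marker/aperiodic-cyclic-word device the paper uses only in the boundary cases.
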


\begin{proof}
The transition matrix $M(t)$ also gives the transition matrix of $\tilde{\phi}$ under the Markov partition $\cup_{i=1}^5 \mathbb{Z}[t](\tilde{R_i})$ given by $\tilde{G}$. Since $m_{4,4}=3t+9+3t^{-1}$, we know that $\tilde{\phi}^{-1}(\tilde{R_4})\cap \tilde{R_4}$ has nine components, while $\tilde{\phi}^{-1}(\tilde{R_4})\cap t\cdot \tilde{R_4}$ and $\tilde{\phi}^{-1}(\tilde{R_4})\cap t^{-1}\cdot\tilde{R_4}$ both have three components.

For any homology class $au+bt$ with $b\ne 0$ and $a\ne |b|$, let's suppose $b>0$, then we can take $x \in R_4$, with lifting $\tilde{x}\in \tilde{R_4}$, such that:\\
1) $\tilde{\phi}^i(\tilde{x})\in t^i\cdot \tilde{R_4}$ for $0\leq i \leq b$, \\
2) $\tilde{\phi}^j(\tilde{x})\in t^b\cdot \tilde{R_4}$ for $b< j \leq a$,\\
3) $\tilde{\phi}^a(\tilde{x})= t^b\cdot \tilde{x}.$\\
By 3), $x$ is a period point with primitive closed orbit $c$ passing through it with $kc=au+bt$ for some $k\in \mathbb{Z}_+$. Actually, the period of $x$ is $a$. Otherwise, by 1) and 2), either $\tilde{\phi}^a(\tilde{x})= \tilde{x}$ or $\tilde{\phi}^a(\tilde{x})= t^a\cdot \tilde{x}$, contradicts with $b\ne 0$ and $a\ne |b|$. So $k=1$.

Then we need only to deal with homology classes $au$, $au+at$ and $au-at$. Since the proof of these three cases are similar, we will only prove the $au+at$ case.
Since $\tilde{\phi}^{-1}(t\cdot \tilde{R_4})\cap \tilde{R_4}$ has three components, we denote them by $S_1,S_2,S_3$. Then there exists $\tilde{x} \in S_1$, such that:\\
1) $\tilde{\phi}(\tilde{x})\in t \cdot S_2$, \\
2) $\tilde{\phi}^i(\tilde{x})\in t^i\cdot S_1$ for $1< i \leq a$,\\
3) $\tilde{\phi}^a(\tilde{x})=t^a \cdot \tilde{x}.$\\
By the same argument as above, $x$ has period $a$ and the primitive closed orbit passing through $x$ has homology $au+at$.
\end{proof}

Since $\alpha_1$ is presented by fiber surface $S$, we have $\|\alpha_1\|=2$. Since $S$ is the closed surface with genus $2$, by Corollary \ref{special}, $N=M(S,\phi)$ admits an involution $\bar{\tau}$ with $\bar{\tau}_*(u)=u$, $\bar{\tau}_*(t)=-t$. So $\bar{\tau}^*(\alpha_1)=\alpha_1$, $\bar{\tau}^*(\alpha_2)=-\alpha_2$. It implies that $\|\alpha_1+t\alpha_2\|=\|\alpha_1-t\alpha_2\|$ for any $t \in \mathbb{R}$. Since $\alpha_1+t\alpha_2$ lies in the interior of fibered cone $C$ for any $t\in (-1,1)$, $\|\alpha_1+t\alpha_2\|=2$ for $t\in (-1,1)$. Thus the corresponding open fibered face $F=\{\frac{1}{2}\alpha_1+t\alpha_2|\ t\in(-\frac{1}{2},\frac{1}{2})\}$, and the dual of this fibered face is $x=2u\in H_1(N,\mathbb{Z})$.

\subsection{An Alternative Method for Irrationality}\label{numerical}
Although the Drilling Theorem (Theorem \ref{drill}), Branched Covering Theorem (Theorem \ref{branch}) and Lemma \ref{infty} show us there are infinitely many pseudo-Anosov maps (on closed surface or surface with boundary) with $A(S,\phi)\ne \mathbb{Q}$, we do not know for which class $c$, $A(S_c,\phi_c)\ne\mathbb{Q}$ holds. The constants $D$ and $D'$ in the proof of both theorems are very large, and it is difficult to find which drilling classes (branched covering classes) are the exceptional classes.

We will use an alternative method to show that certain drilling (branched covering) classes have irrational invariant. Assuming the minimal point is rational, we use algebraic number theory to bound the denominator of some function of minimal point coordinate, then we compute the function numerically and show that it can not be a rational number with denominator under the given bound. Since we use numerical method here, it only gives a case-by-case argument and provides some special examples, but may not help us to understand the invariant deeper.

We will still work on the example constructed in the previous subsection and we use all the notations therein.

Let's first deal with the drilling case, choose the simplest drilling class $c=u+t$, then the corresponding $F_c=\{\alpha\in C|\ \langle \alpha, c+x \rangle=1\}=\{a\alpha_1+b\alpha_2|\ a>|b|, 3a+b=1\}$. Let the minimal point $m_{F_c}=s\alpha_1+(1-3s)\alpha_2$ ($s\in (1/4,1/2)$), and let $\lambda=\lambda(m_{F_c})$. Plugging in the formula (\ref{Teichmuller}) and omitting the $u-1$ factor, then take derivative as in Proposition \ref{transcendental}, we have the following equation:
{\small \begin{equation}\label{1}
\left\{ \begin{array}{l}
        \lambda^{2s}-5\lambda^{1-2s}-19\lambda^{s}-5\lambda^{4s-1}+14\lambda^{1-3s}+48+14\lambda^{3s-1}-5\lambda^{1-4s}-19\lambda^{-s}-
        5\lambda^{2s-1}+\lambda^{-2s}=0, \\
        2\lambda^{2s}+10\lambda^{1-2s}-19\lambda^{s}-20\lambda^{4s-1}-42\lambda^{1-3s}+42\lambda^{3s-1}+20\lambda^{1-4s}+19\lambda^{-s}-
        10\lambda^{2s-1}-2\lambda^{-2s}=0.
\end{array} \right.
\end{equation}}

Let $\lambda=X, \lambda^s=Y$, then we have:
{\small \begin{equation}\label{2}
\left\{ \begin{array}{l}
        -(5Y^{-2}-14Y^{-3}+5Y^{-4})X+(Y^2-19Y+48-19Y^{-1}+Y^{-2})-(5Y^4-14Y^3+5Y^2)X^{-1}=0, \\
        (10Y^{-2}-42Y^{-3}+20Y^{-4})X+(2Y^2-19Y+19Y^{-1}-2Y^{-2})+(-20Y^4+42Y^3-10Y^2)X^{-1}=0.
\end{array} \right.
\end{equation}}

By solving the first quadratic equation of (\ref{2}), we get: $X=$
{\small \begin{equation}\label{3}
\frac{(Y^2-19Y+48-19Y^{-1}+Y^{-2})\pm \sqrt{(Y^2-9Y+20-9Y^{-1}+Y^{-2})(Y^2-29Y+76-29Y^{-1}+Y^{-2})}}{10Y^{-2}-28Y^{-3}+10Y^{-4}}.
\end{equation}}

By plugging in formula (\ref{3}) into the second equation of (\ref{2}), let $A=Y+Y^{-1}$ and simplify the equation, we get:
\begin{align}\label{4}
0=f(A)=200A^6-9530A^5+128025A^4-778216A^3+2422552A^2-3782016A+2354832 \nonumber \\
=(15A-42)^2(A^2-9A+18)(A^2-29A+74)-(A^2-4)(5A^2-28A+36)^2.
\end{align}

$A$ is a positive real root of equation (\ref{4}), with
\begin{align}\label{5}
Y=\frac{A+\sqrt{A^2-4}}{2},
\end{align} and
\begin{align}\label{6}
XY^{-3}=\frac{(A^2-19A+46)\pm \sqrt{(A^2-9A+18)(A^2-29A+74)}}{10A-28} \nonumber \\
=\frac{3(5A-14)(A^2-19A+46)\pm (5A^2-28A+36)\sqrt{A^2-4}}{6(5A-14)^2}.
\end{align}

So $X,Y$ lie in number field $\mathbb{F}$ with $[\mathbb{F}:\mathbb{Q}(A)]\leq 2$. Since $A$ is the root of degree $6$ polynomial $f(x)$ in equation (\ref{4}), and we can check that $f(x)$ is irreducible modulus $7$ by Mathematica (\cite{Math}), we have $[\mathbb{Q}(A):\mathbb{Q}]=6$. So $[\mathbb{F}:\mathbb{Q}]=6$ or $12$.

Since $A=Y+Y^{-1}$, the minimal polynomial $p(x)$ of $Y$ is a factor of $x^6\cdot f(x+x^{-1})$, so either $p(x)=\pm x^{\deg(p)}p(x^{-1})$, or $p(x)p(x^{-1})|\ x^6\cdot f(x+x^{-1})$. Since $f(x)$ is irreducible, we have either $p(x)=x^6\cdot f(x+x^{-1})$, or $p(x)p(x^{-1})= x^6\cdot f(x+x^{-1})$. Both of these two cases imply that $Y$ is not an algebraic unit since the first or last term of $f(x)$ has coefficient greater than $1$. Moreover, by equation (\ref{4}) and $A=Y+Y^{-1}$, we know that both $200Y$ and $200Y^{-1}$ are algebraic integers.

The definition in algebraic number theory appears in the following paragraphs can be found in \cite{MR} Chapter 0.

Let $\mathcal{O}_{\mathbb{F}}$ be the ring of algebraic integers of number field $\mathbb{F}$, then $Y\mathcal{O}_{\mathbb{F}}\subset \mathbb{F}$ is a {\it fractional ideal} of Dedekind domain $\mathcal{O}_{\mathbb{F}}$. By \cite{MR} Theorem 0.3.4, $Y\mathcal{O}_{\mathbb{F}}$ can be decomposed as production of prime ideals of $\mathcal{O}_{\mathbb{F}}$ and their inversions. Let the decomposition be $Y\mathcal{O}_{\mathbb{F}}=\mathcal{P}_1^{p_1}\cdots\mathcal{P}_m^{p_m}\cdot \mathcal{Q}_1^{-q_1}\cdots\mathcal{Q}_n^{-q_n}$, here $p_i,q_j\in \mathbb{Z}_+$, $\mathcal{P}_i$ and $\mathcal{Q}_j$ are prime ideals of $\mathcal{O}_{\mathbb{F}}$. This decomposition is nontrivial since $Y$ is not an algebraic unit.

Let $200\mathcal{O}_{\mathbb{F}}=(2\mathcal{O}_{\mathbb{F}})^3(5\mathcal{O}_{\mathbb{F}})^2=(\mathcal{P}^2_1)^{3a_1}\cdots(\mathcal{P}^2_m)^{3a_m}\cdot (\mathcal{P}^5_1)^{2b_1}\cdots(\mathcal{P}^5_n)^{2b_n}$. Since $[\mathbb{F}:\mathbb{Q}]\leq 12$, we have $a_i,b_j\leq 12$. Since $200Y \in \mathcal{O}_{\mathbb{F}}$, we have that $\mathcal{Q}_j \in \{\mathcal{P}^2_1,\cdots,\mathcal{P}^2_m, \mathcal{P}^5_1,\cdots,\mathcal{P}^5_n\}$ and $0<q_j\leq 36$. Since $200Y^{-1} \in \mathcal{O}_{\mathbb{F}}$, we also have $0<p_i \leq 36$.

If $su+(1-s)t$ is a rational class with $s=\frac{q}{p}$ and $gcd(p,q)=1$, then $X=Y^{\frac{1}{s}}=Y^{\frac{p}{q}}$. So $X\mathcal{O}_{\mathbb{F}}=\mathcal{P}_1^{\frac{pp_1}{q}}\cdots\mathcal{P}_m^{\frac{pp_m}{q}}\cdot \mathcal{Q}_1^{-\frac{pq_1}{q}}\cdots\mathcal{Q}_n^{-\frac{pq_n}{q}}$, which implies $q\leq 36$. To show that $A(S_c,\phi_c)\ne \mathbb{Q}$, we need only to compute $\frac{\log{X}}{\log{Y}}$ numerically, and check it is not a rational number with denominator less or equal to $36$.

We can solve equation (\ref{4}) by Mathematica (\cite{Math}), and get $A=30.38934206615629\dots$. By plugging in the value of $A$ into equation (\ref{5}) and (\ref{6}), we get $Y=30.35640008366680\dots$, and $X=11506.21849\dots$. So $\frac{\log{X}}{\log{Y}}=2.739707\dots$. It is easy to check that $\frac{\log{X}}{\log{Y}}$ can't be a rational number with denominator $\leq 36$. So for $c=u+t$, $A(S_c,\phi_c)\ne \mathbb{Q}$. Here the minimal point of $\lambda(\cdot)$ on $F_c=\{\alpha \in C|\ \langle \alpha,c+x \rangle=1\}$ is $su+(1-3s)t$ with $s=\frac{\log{Y}}{\log{X}}=0.365002\dots$.

Take primitive class $\beta_n=n\alpha_1-(n-1)\alpha_2\in C$ with $n\in \mathbb{Z}_+$. Since $\langle \beta_n, u+t \rangle=1$, the corresponding fiber surface $S_n$ in $N_c=M(S_c,\phi_c)$ has one boundary component. On the other hand $-\chi(S_n)=\langle \beta_n, x+c \rangle=\langle n\alpha_1-(n-1)\alpha_2, 3u+t \rangle=2n+1$, so $S_n$ is a genus $n+1$ surface with one boundary component. Now we get the following theorem.

\begin{thm}\label{onecusp}
For any genus $g$ surface $S$ with one boundary component and $g\geq 2$, there exists pseudo-Anosov map $\phi$ on $S$, such that $A(S,\phi)\ne\mathbb{Q}$.
\end{thm}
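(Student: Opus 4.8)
The plan is to extract the claimed pseudo-Anosov maps directly from the drilling construction worked out above, rather than starting from scratch. We have already produced, for the drilling class $c = u+t$ on $N = M(S,\phi)$, a new hyperbolic surface bundle $N_c = M(S_c,\phi_c)$ with $A(S_c,\phi_c)\ne\mathbb{Q}$, and we have identified the fibered cone $C \subset H^1(N_c;\mathbb{R}) \cong H^1(N;\mathbb{R})$. The key point is that \emph{every} integer class in this single fibered cone carries the same invariant $A_C = A(S_c,\phi_c)$ by the definition of $A_C$ (equivalently by Corollary \ref{inv2}), so it suffices to show that the cone $C$ contains integer classes whose dual fiber surface is $\Sigma_{g,1}$ for every $g \geq 2$.

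First I would recall that $C = \{x_1\alpha_1 + x_2\alpha_2 \mid x_1 > |x_2|\}$ and that the face $F_c = (i_c^*)^{-1}(F')$ is cut out by $\langle \alpha, c+x\rangle = 1$ with $c + x = 3u + t$. Then for each integer $n \geq 1$ consider the primitive class $\beta_n = n\alpha_1 - (n-1)\alpha_2$. I must check: (i) $\beta_n \in C$, which is immediate since $n > n-1 = |{-(n-1)}|$; (ii) $\beta_n$ is primitive, since $\gcd(n, n-1) = 1$; (iii) the dual surface $S_n$ in $N_c$ has exactly one boundary component — this follows because the number of boundary circles of $S_n$ equals the intersection number of $\beta_n$ with the drilling curve, $\langle \beta_n, c\rangle = \langle n\alpha_1 - (n-1)\alpha_2, u+t\rangle = n - (n-1) = 1$; and (iv) the Euler characteristic is as claimed, using $\|\beta_n\|_{N_c} = \langle \beta_n, i_c^*(x) \rangle$ where $i_c^*(x) = x + c$ has class $3u+t$ relative to our basis, so $-\chi(S_n) = \langle n\alpha_1 - (n-1)\alpha_2, 3u+t\rangle = 3n - (n-1) = 2n+1$. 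Since $S_n$ has one boundary circle and $-\chi(S_n) = 2n+1$, it is a genus $n+1$ surface with one puncture, i.e. $S_n = \Sigma_{n+1,1}$.

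Finally, since $\beta_n$ lies in the fibered cone $C$ of $N_c$, its monodromy $\psi_n$ is pseudo-Anosov and $A(S_n, \psi_n) = A_C = A(S_c,\phi_c) \ne \mathbb{Q}$. Letting $n$ range over $\{1, 2, 3, \dots\}$ gives, for every $g = n+1 \geq 2$, a pseudo-Anosov map $\psi_n$ on $\Sigma_{g,1}$ with $A(\Sigma_{g,1},\psi_n)\ne\mathbb{Q}$, which is exactly the statement. The only real content here is the bookkeeping in step (iv): making sure the Thurston norm of $\beta_n$ on $N_c$ is computed against $x + c$ and not $x$, which is precisely the tilting phenomenon $\|i_c^*(\alpha)\| = \|\alpha\| + \langle\alpha, c\rangle$ recorded before Lemma \ref{iso}; everything else is arithmetic. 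I do not expect a genuine obstacle — the heavy lifting was already done in establishing $A(S_c,\phi_c)\ne\mathbb{Q}$ for this example.
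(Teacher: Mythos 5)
Your proposal is correct and follows essentially the same route as the paper: the paper likewise takes the primitive classes $\beta_n = n\alpha_1-(n-1)\alpha_2$ in the fibered cone of $N_c$ for the drilling class $c=u+t$, computes the boundary count from $\langle\beta_n,c\rangle=1$ and the Euler characteristic from $\langle\beta_n,x+c\rangle=2n+1$, and concludes each $\Sigma_{n+1,1}$ carries a monodromy with the common invariant $A_C\ne\mathbb{Q}$. Your added checks of membership in the cone and primitivity, and the explicit remark that the norm must be paired against $x+c$ rather than $x$, are correct and only make explicit what the paper leaves implicit.
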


Now let's turn to branched covering case. Since the branched covering class has nonprimitive homology, the simplest choice is $c'=2u+2t$, and $d(c')x+(d(c')-1)c'=6u+2t$. Since $d(c')x+(d(c')-1)c'=6u+2t$ is linear dependent with $c+x=3u+t$ in the previous example, we know that $F^{c'}=\frac{1}{2}F_c$, thus $m_{F^{c'}}=\frac{1}{2}m_{F_c}$. So $A(S^{c'},\phi^{c'})=A(S_c,\phi_c)\ne \mathbb{Q}$.

Let $\beta_n=n\alpha_1-(n-1)\alpha_2$ with $n\in \mathbb{Z}_+$, the corresponding surface $S_n'$ in $N^{c'}=M(S^{c'},\phi^{c'})$ satisfies $-\chi(S_n')=\langle \beta_n, d(c')x+(d(c')-1)c'\rangle=\langle n\alpha_1-(n-1)\alpha_2, 6u+2t \rangle=4n+2$. So $S_n'$ has genus $2n+2$ with $A(S_n',\phi_n')\ne\mathbb{Q}$.

\begin{thm}\label{even}
For any closed genus $2n$ surface $S$ with $n\geq 2$, there exists pseudo-Anosov map $\phi$ on $S$, such that $A(S,\phi)\ne\mathbb{Q}$.
\end{thm}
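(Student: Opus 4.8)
The plan is to deduce the statement from the branched-covering example just constructed, by letting the fiber class range over a single fibered cone. Recall that $N^{c'}=M(S^{c'},\phi^{c'})$ is the $2$-fold cyclic branched cover of $N=M(S,\phi)$ along the closed orbit $c'=2u+2t$; it is a closed hyperbolic surface bundle, and its fibered cone $C'$ containing $[S^{c'}]$ satisfies $A_{C'}=A(S^{c'},\phi^{c'})=A(S_c,\phi_c)\ne\mathbb{Q}$, the last inequality being the numerical verification of Section~\ref{numerical}. Since $A_{C'}$ depends only on $C'$ (Section~\ref{property}), every integral class in $C'$ is fibered with pseudo-Anosov monodromy and has associated $\mathbb{Q}$-module equal to this same irrational $A_{C'}$; so it suffices to exhibit, for each prescribed even genus $2n$ with $n\ge 2$, a fiber of $N^{c'}$ of that genus.

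First I would fix, for each integer $k\ge 1$, the class $\beta_k=k\alpha_1-(k-1)\alpha_2\in H^1(N;\mathbb{Z})$; it is primitive and lies in the interior of $C=\{x_1\alpha_1+x_2\alpha_2\mid x_1>|x_2|\}$, since $k>k-1\ge -(k-1)$. Its pullback $(p^{c'})^*(\beta_k)$ lies in $C'$ by the identity $(p^{c'})^*(C)=(C')^H\subset C'$ from the proof of Theorem~\ref{branch}, and the Riemann--Hurwitz formula for the Thurston norm under this branched cover gives
\[
\|(p^{c'})^*(\beta_k)\|=\langle\beta_k,\,d(c')x+(d(c')-1)c'\rangle=\langle k\alpha_1-(k-1)\alpha_2,\,6u+2t\rangle=4k+2 .
\]
Hence the fiber $S_k'$ of $N^{c'}$ dual to $(p^{c'})^*(\beta_k)$ is a closed surface with $-\chi(S_k')=4k+2$; a divisibility check (a transfer argument for the $2$-fold branched cover, together with the evenness of $-\chi$ of a connected closed surface, which rules out a factor of $2$) shows $(p^{c'})^*(\beta_k)$ is primitive, so $S_k'$ is connected of genus $2k+2$ and its monodromy $\phi_k'$ is pseudo-Anosov with $A(S_k',\phi_k')=A_{C'}\ne\mathbb{Q}$.

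Finally I would re-index: for $n\ge 2$ put $k=n-1\ge 1$, so $S_{n-1}'$ is a closed surface of genus $2(n-1)+2=2n$ carrying a pseudo-Anosov map $\phi_{n-1}'$ with $A(S_{n-1}',\phi_{n-1}')\ne\mathbb{Q}$, which is exactly the assertion. The step deserving the most care is the branched-cover bookkeeping: checking that $(p^{c'})^*(\beta_k)$ genuinely lands in the fibered cone $C'$ (and not merely in $H^1(N^{c'};\mathbb{R})$) and that its divisibility does not corrupt the Euler-characteristic count, which the parity observation above settles. Everything else is a direct quotation of identities already in place: the Teichm\"uller polynomial~(\ref{Teichmuller}), the relation $F^{c'}=\tfrac12 F_c$, and the irrationality of $A(S_c,\phi_c)$ from Section~\ref{numerical}.
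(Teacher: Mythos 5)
Your proposal is correct and follows essentially the same route as the paper: apply the branched covering construction along $c'=2u+2t$ to the explicit example of Section~\ref{numerical}, use $F^{c'}=\tfrac12 F_c$ to conclude $A(S^{c'},\phi^{c'})=A(S_c,\phi_c)\ne\mathbb{Q}$, and then evaluate $-\chi$ of the fibers dual to the pullbacks of $\beta_k=k\alpha_1-(k-1)\alpha_2$ to realize every even genus $\ge 4$. The only difference is your (welcome, and correct) explicit check that $(p^{c'})^*(\beta_k)$ is primitive so the fiber is connected, a point the paper leaves implicit.
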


\begin{rem}
Since there does not exist a genuine branched cover from $\Sigma_{3,0}$ to $\Sigma_{2,0}$, we can not produce a pseudo-Anosov map $\phi$ on $\Sigma_{3,0}$ with $A(\Sigma_{3,0},\phi)\ne \mathbb{Q}$ from the example in this section. We will construct another example in the next section and show a similar theorem of Theorem \ref{even} for odd genus surfaces.
\end{rem}

\section{Penner's Construction}\label{cons2}
In this section, we will study pseudo-Anosov maps given by Penner's construction (\cite{Pe}) as in Theorem \ref{penner}. We will use notations in Theorem \ref{penner}.

Suppose $\phi\in \mathcal{D}(a^+,b^-)$, such that all the $T_{a_i}$ and $T^{-1}_{b_j}$ appear in the presentation of $\phi$, then $\phi$ is pseudo-Anosov. Since the induced map of $T_c$ on homology is given by $(T_c)_*(x)=x+(x\cdot c)c$, we have $\phi_*(x)-x\in span(a_1,\cdots,a_m,b_1,\cdots,b_n)$. So $i_*(H_1(S,\mathbb{R}))\subset H_1(M(S,\phi);\mathbb{R})$ has a natural quotient $H_1(S,\mathbb{R})/span(a_1,\cdots,a_m,b_1,\cdots,b_n)$.

\begin{defn}
Let $\phi$ be a pseudo-Anosov element in $\mathcal{D}(a^+,b^-)$. $\phi$ is said to be {\it generic} if  $i_*(H_1(S,\mathbb{R}))=H_1(S,\mathbb{R})/span(a_1,\cdots,a_m,b_1,\cdots,b_n)$, i.e. the quotient in the previous paragraph is trivial.
\end{defn}

For a generic $\phi$, we can read the homology of $H_1(M(S,\phi);\mathbb{R})$ directly from twisting curves $a_i,b_j$, but do not need to know more information about $\phi$.

\subsection{Polynomial $\Phi$ and Dilatation Function $\lambda(\cdot)$}
In this subsection, for a generic element $\phi\in \mathcal{D}(a^+,b^-)$, we will define another polynomial $\Phi$ which can also compute the dilatation function $\lambda(\cdot)$ effectively. Moreover, $\Phi$ can be computed directly from the two families of curves $\{a_i\}_{i=1}^m,\{b_j\}_{j=1}^n$ and the presentation of $\phi$, but we do not need to construct the invariant train track of $\phi$.

In \cite{Pe} Theorem 3.1, Penner constructed an invariant bigon-track $\tau$ of pseudo-Anosov map $\phi$. Actually, $\tau$ only depends on the two families of curves $\{a_i\}_{i=1}^m$, $\{b_j\}_{j=1}^n$, but does not depend on the specific element $\phi\in \mathcal{D}(a^+,b^-)$.

For each bigon region $B_k$ in $S\setminus \tau$, we choose a point $p_k\in B_k$, and let $S'=S\setminus \{p_1,\cdots,p_k\}$ with inclusion $i:S'\rightarrow S$. Let $\{a_i'\}_{i=1}^m=\{i^{-1}(a_i)\}_{i=1}^m,\{b_j'\}_{j=1}^n=\{i^{-1}(b_j)\}_{j=1}^n$ be the corrsponding two families of disjoint simple closed curves on $S'$. Then $\{a_i'\}_{i=1}^m, \{b_j'\}_{j=1}^n$ still satisfy the assumption of Theorem \ref{penner}. Let $\phi' \in \mathcal{D}(a'^+,b'^-)$ be the mapping class of $S'$ corresponding with $\phi \in \mathcal{D}(a^+,b^-)$, then we have $i\circ \phi'=\phi\circ i$, and $\phi'$ is also a pseudo-Anosov map.

\begin{lem}\label{nice1}
Let $N=M(S,\phi)$ and $N'=M(S',\phi')$ be the mapping torus with inclusion $i':N'\rightarrow N$. Let $C\subset H^1(N;\mathbb{R})$ be the fibered cone containing the dual of $[S]$. Then for any $\alpha\in C$, we have $\lambda(i^*(\alpha))=\lambda(\alpha)$. Moreover, the homology class dual to fibered cone $F$ is $|\chi(S)|u$ for some $u\in H_1(N;\mathbb{Z})$ and $u\cap [S]=1$.
\end{lem}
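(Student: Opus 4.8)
The plan is to prove the two assertions separately. For the dilatation identity, the point is that $N'$ is simply $N$ drilled along a union of closed orbits of the suspension flow, so this is an instance of the drilling construction of Section~\ref{cons1}. For the statement about the dual class, the point is that Penner's construction keeps the singular points of the invariant foliation \emph{fixed} by $\phi$, so their suspensions are ``vertical'' loops in $N$.

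\emph{Reduction and the dilatation identity.} Since $\tau$ is $\phi$--invariant, $\phi$ permutes the bigon complementary regions of $S\setminus\tau$; I would choose the marked point in each bigon so that $\phi$ carries each one into the next bigon (equivalently, so that the chosen points form a union of periodic $\phi$--orbits). Then $\phi'$ is a well--defined mapping class on $S'=S\setminus\{p_1,\dots,p_\ell\}$, and $N'=M(S',\phi')$ is canonically identified with $N\setminus\gamma$, where $\gamma=\gamma_1\sqcup\cdots\sqcup\gamma_r$ is the suspension of $\{p_1,\dots,p_\ell\}$ --- a disjoint union of closed orbits of the suspension flow, one per $\phi$--orbit of bigons --- with $i'$ the inclusion. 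For an integral $\alpha\in C$ with fiber $S_\alpha$ and monodromy $\psi_\alpha$, the pullback $(i')^*\alpha$ is represented by $S_\alpha$ with the finitely many points $S_\alpha\cap\gamma$ deleted and monodromy the restriction of $\psi_\alpha$; these points lie on the flow orbits $\gamma_k$, hence are $\psi_\alpha$--periodic, and deleting a periodic orbit from a pseudo--Anosov map changes neither its invariant measured foliations (only their complementary regions acquire punctures) nor therefore its dilatation, so $\lambda((i')^*\alpha)=\lambda(\alpha)$ on integral classes. As in Section~\ref{cons1}, continuity of $\lambda_C$ and $\lambda_{C'}$ together with density of integral rays upgrades this to all of $C$.

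\emph{The dual homology class.} Let $\mathcal F$ be the $2$--dimensional singular foliation of $N$ obtained by suspending the stable foliation $\mathcal F^s$ of $\phi$ on $S$; its singular orbits are the suspensions of the singular points of $\mathcal F^s$, an orbit inherited from a $p$--pronged point being called $p$--pronged. The key observation, special to Penner's construction, is that $\phi$ fixes every singular point of $\mathcal F^s$: being a product of Dehn twists along the $a_i$ and $b_j$, $\phi$ preserves each complementary region of $\bigcup a_i\cup\bigcup b_j$ (equivalently, of $\tau$) set--wise, and each such region that is neither a bigon nor a boundary annulus carries exactly one singular point of $\mathcal F^s$, which is therefore fixed; and the prongs along $\partial S$ are fixed because $\phi$ restricts to the identity on $\partial S$. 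Hence every singular orbit is a \emph{vertical} loop $\{s\}\times S^1$ of homology class $u$, and when $\partial S\neq\emptyset$ the boundary tori $\partial_iS\times S^1$ are vertical as well. Now take an integral $\alpha\in C$ and a representative $S_\alpha$ transverse to the flow. It meets each singular orbit in exactly $\langle\alpha,u\rangle$ points, each a singularity of $\mathcal F$ restricted to $S_\alpha$ with the same prong number, so the Poincar\'e--Hopf index count for singular foliations gives $\chi(S_\alpha)=\langle\alpha,u\rangle\,\chi(S)$, the boundary contributions (when present) scaling by the same factor $\langle\alpha,u\rangle$ since the boundary tori are vertical. Therefore $\|\alpha\|=-\chi(S_\alpha)=|\chi(S)|\langle\alpha,u\rangle=\langle\alpha,|\chi(S)|u\rangle$ for every $\alpha\in C$; since the Thurston norm is linear on the fibered cone and equals there the pairing with its dual homology class, that dual class is $|\chi(S)|u$. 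Finally $u\cap[S]=1$, either by the choice of $u$ in the splitting $H_1(N;\mathbb Z)/\mathrm{Tor}=\mathbb Z[u]\oplus T$, or because a vertical orbit meets the fiber exactly once.

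\emph{Main obstacle.} The only step that is not pure bookkeeping is the claim that $\phi$ \emph{fixes} the singular points of $\mathcal F^s$, rather than merely permuting them periodically with possibly homologically nontrivial suspension orbits; verifying it requires inspecting the explicit invariant bigon track built in \cite{Pe} and confirming that its singularities lie in the interiors of the complementary regions of $\bigcup a_i\cup\bigcup b_j$, away from the supports of all the twists. If one is only willing to grant periodicity, one still needs the suspended singular orbits to have trivial $T$--component, and that is where genericity of $\phi$ and the vanishing $[a_i]=[b_j]=0$ in $H_1(N)$ would be brought back in. The drilling identity for dilatations and the index count are routine.
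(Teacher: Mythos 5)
Your overall architecture matches the paper's: identify $N'$ with $N$ drilled along the suspension orbits of the marked points to get $\lambda(i^*\alpha)=\lambda(\alpha)$, then compute the Thurston norm on $C$ by a Poincar\'e--Hopf index count over the suspended singular orbits. The first half and the index count are fine. But there is a genuine gap in the middle step, and you have misdiagnosed where genericity is needed.

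The problem is the sentence ``Hence every singular orbit is a vertical loop $\{s\}\times S^1$ of homology class $u$.'' Knowing that $\phi$ fixes each singular point $s_i$ tells you only that the orbit $c'_i$ through $s_i$ is a closed loop with $c'_i\cap[S]=1$; it does \emph{not} tell you that the $c'_i$ are all homologous. In the splitting $H_1(N;\mathbb{Z})/\mathrm{Tor}=\mathbb{Z}[u]\oplus T$, meeting the fiber once only determines the class modulo $T$, and for two fixed points $x,y$ joined by an arc $\gamma$ one has $[c'_x]-[c'_y]=[\gamma\cdot\phi(\bar\gamma)]\in T$, which is in general nonzero (indeed the whole point of Fried's cone of homology directions is that closed orbits realize many different classes). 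The paper closes exactly this gap: since $\phi$ is a product of twists along the $a_i,b_j$, the loop $\gamma\cdot\phi(\bar\gamma)$ lies in $\mathrm{span}(a_1,\dots,a_m,b_1,\dots,b_n)\subset H_1(N;\mathbb{R})$, and \emph{genericity} of $\phi$ says precisely that this span vanishes in $H_1(N;\mathbb{R})$, so all the $c'_i$ share one class $u$. In your write-up you relegate this to the hypothetical case where the singular points are ``merely periodic''; in fact it is indispensable even when they are all fixed, and without it the identity $\|\alpha\|=\langle\alpha,\sum_i\frac{d_i-2}{2}u\rangle$ (hence the conclusion that the dual class is $|\chi(S)|u$) does not follow. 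Once you insert the $[\gamma\cdot\phi(\bar\gamma)]$ computation and invoke genericity at that point, your argument coincides with the paper's.
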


\begin{proof}
From $\{a_i'\}_{i=1}^m, \{b_j'\}_{j=1}^n$, by using Penner's method, we can construct an invariant bigon track $\tau'=i^{-1}(\tau)$. $\tau'$ is actually a train track since we have added a puncture on each bigon complement of $S\setminus \tau$. A pair of transverse $\phi'$-invariant singular foliation $(\mathcal{F}'^+,\mathcal{F}'^-)$ on $S'$ can be constructed as in \cite{Pe}. We first construct $\phi'$-invariant horizontal and vertical foliations on $N(\tau')$ (a neighborhood of $\tau'$). Then collapse the disc and annulus regions of $S'\setminus N(\tau')$ to get singular foliations $(\mathcal{F}'^+, \mathcal{F}'^-$) on $S'$. Since the region of $S'\setminus \tau'$ corresponding with bigon region $B_k\subset S$ is $B_k-\{p_k\}$, $(\mathcal{F}'^+,\mathcal{F}'^-)$ near the puncture is as shown Figure 6. To make the picture clearer, we draw a boundary component but not a puncture in the picture.

Since $(\mathcal{F}'^+,\mathcal{F}'^-)$ give a "$2$-prong"-picture near puncture $p_k$ as in Figure 6, $(\mathcal{F}'^+,\mathcal{F}'^-)$ give a pair of transverse $\phi$-invariant singular foliation $(\mathcal{F}^+,\mathcal{F}^-)$ on $S$. $\phi$ preserves the pair of singular foliations $(\mathcal{F}^+,\mathcal{F}^-)$, and each $p_k$ is a fixed point of the pseudo-Anosov map $\phi$ on $S$. Let $s_1,\cdots,s_t$ be singular points of transverse singular foliation $(\mathcal{F}^+,\mathcal{F}^-)$ on $S$. By the construction of $(\mathcal{F}'^+,\mathcal{F}'^-)$, all these $s_i$ are fixed point of $\phi$.

Now we turn to $3$-manifolds $N=M(S,\phi)$ and $N'=M(S',\phi')$. Since the punctures $p_k$ are all fixed point of $\phi$, let $c_k$ be the closed orbit passing through $p_k$, then $N'$ is obtained from $N$ by drilling closed orbits $c_1,\cdots,c_l$ of suspension flow. So $\lambda(i^*(\alpha))=\lambda(\alpha)$ for any $\alpha \in C$.

\begin{center}
\psfrag{a}[]{\color{red} $\mathcal{F}^+$} \psfrag{b}[]{\color{blue} $\mathcal{F}^-$}
\includegraphics[width=3in]{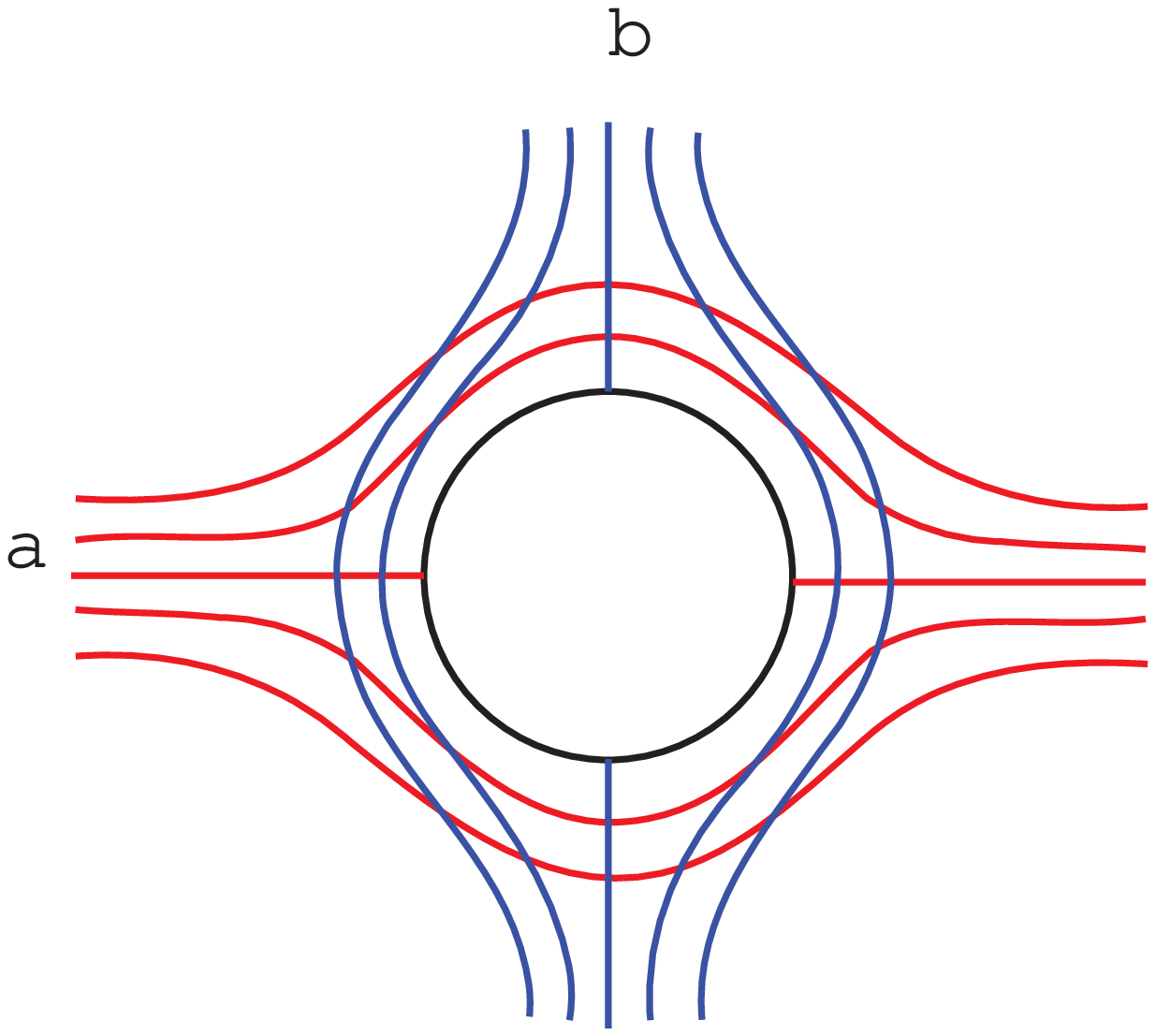}
\vskip 0.5 truecm
 \centerline{Figure 6}
\end{center}

Let $c'_1,\cdots,c'_t$ be the oriented closed orbit passing through $s_1,\cdots,s_t$ respectively with $c'_i\cap S=1$. We will show that all the closed orbits $c'_1,\cdots,c'_t$ share the same homology class in $N$. For any two points in $x,y\in \{s_1,\cdots,s_t\}$, let $\gamma$ be an arc connecting $x$ and $y$, then the corresponding closed orbit $c_x'$ and $c_y'$ satisfy $\gamma\cdot c_y'\cdot \phi(\bar{\gamma})\cdot \bar{c}_x'=0$ in $\pi_1(N)$. So $[c_x']-[c_y']=[\gamma\cdot \phi(\bar{\gamma})]\in H_1(N;\mathbb{Z})$. Since $\phi$ is composition of Dehn twists along $\alpha$ and $\beta$ curves, $[\gamma\cdot \phi(\bar{\gamma})]\in span(a_1,\cdots,a_m,b_1,\cdots,b_n)\subset H_1(N;\mathbb{R})$. Since $\phi$ is generic, $span(a_1,\cdots,a_m,b_1,\cdots,b_n)=\{0\}$ in $H_1(N;\mathbb{R})$, so $[c_x']-[c_y']=[\gamma\cdot \phi(\bar{\gamma})]=0 \in H_1(N;\mathbb{R})$ and $[c_x']=[c_y']$. Take $u\in H_1(N;\mathbb{Z})$, such that $[c'_1]=\cdots=[c'_t]=u$

Any singular point $s_i$ gives a $d_i$-prong singularity of transverse singular foliation $(\mathcal{F}^+,\mathcal{F}^-)$ with $d_i>2$. By the Poincare-Hopf Theorem, $\|\alpha\|= \langle \alpha,\sum_{i=1}^t \frac{d_i-2}{2} u \rangle$ holds for all integer classes $\alpha\in C$, so it holds for any $\alpha\in C$. Since $\|[S]\|=|\chi(S)|=(|\chi(S)|u)\cap [S]$, the dual of fibered face $F$ is $\sum_{i=1}^t\frac{d_i-2}{2} u=|\chi(S)|u$.
\end{proof}

Decompose $H_1(N;\mathbb{Z})/Tor=\mathbb{Z}[u]\oplus T$ as usual, here $T$ is the image of $H_1(S;\mathbb{Z})$. Let $\hat{p}:\hat{N}\rightarrow N$ be the maximal free abelian cover of $N$, $\hat{S}$ be one component of $\hat{p}^{-1}(S)$, and $\hat{\tau}\subset \hat{S}$ be one component of $\hat{p}^{-1}(\tau)$. Then we have a $T$ action on $\hat{\tau}$, so branches and switches of $\hat{\tau}$ give us $\mathbb{Z}[T]$-modules $\mathbb{Z}[T]^E$ and $\mathbb{Z}[T]^V$. Since $\hat{\phi}(\hat{\tau})$ is carried by $\hat{\tau}$, we have an $\hat{\phi}$ action on these two modules with matrices $P_E(t)$ and $P_V(t)$ respectively.

\begin{defn}
We define polynomial $\Phi$ by: $$\Phi=\frac{det(uI-P_E(t))}{det(uI-P_V(t))}=\sum_g a_g\cdot g.$$
\end{defn}

\begin{rem}
Here $\tau$ is only a bigon track but may not be a train track, so $\Phi$ may not be equal to $\Theta_F$.
\end{rem}

\begin{prop}\label{simplify}
For any $\alpha \in C$, we have $\lambda(\alpha)=\sup\{k>1|\  0=\Phi_F(k^{\alpha})=\sum_g a_g\cdot k^{\langle \alpha, g\rangle}\}$.
\end{prop}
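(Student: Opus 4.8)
The plan is to transport the statement to the punctured model $N'=M(S',\phi')$, apply McMullen's Theorem~\ref{dilatation} there, and then descend back to $N$; throughout I abbreviate $\Phi=\Phi_F$, and I write $i^*,i_*$ for the maps induced by the inclusion $N'\hookrightarrow N$ (this is the $i'$ of Lemma~\ref{nice1}).

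First I would invoke Lemma~\ref{nice1} and its proof: since $N'$ is obtained from $N$ by drilling the closed orbits through the fixed punctures $p_k$, for every $\alpha\in C$ the class $i^*\alpha$ lies in the fibered cone $C'\subset H^1(N';\mathbb{R})$ containing $[S']$, and $\lambda(i^*\alpha)=\lambda(\alpha)$. Moreover $\tau'=i^{-1}(\tau)$ is a genuine $\phi'$-invariant train track carrying the stable lamination of $\phi'$ (this is exactly what the proof of Lemma~\ref{nice1} builds), and $b_1(N')\ge b_1(N)>1$, so Theorems~\ref{formula} and~\ref{dilatation} both apply on $N'$. Writing $\Theta_{F'}=\sum_{g'}a_{g'}g'$ and using $\langle i^*\alpha,g'\rangle=\langle\alpha,i_*g'\rangle$, Theorem~\ref{dilatation} gives
\[
\lambda(\alpha)=\lambda(i^*\alpha)=\sup\Bigl\{k>1\ \Big|\ 0=\sum_{g'}a_{g'}\,k^{\langle i^*\alpha,\,g'\rangle}=\sum_{g'}a_{g'}\,k^{\langle\alpha,\,i_*g'\rangle}\Bigr\}.
\]
Collecting exponents according to the value of $i_*g'$ in $H_1(N;\mathbb{Z})/Tor$, the Laurent polynomial on the right is $\pi(\Theta_{F'})$ evaluated at $k^\alpha$, where $\pi\colon\mathbb{Z}[H_1(N';\mathbb{Z})/Tor]\to\mathbb{Z}[H_1(N;\mathbb{Z})/Tor]$ is the ring homomorphism induced by $i_*$.

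It therefore remains to identify $\pi(\Theta_{F'})$ with $\Phi$ up to a unit of $\mathbb{Z}[H_1(N;\mathbb{Z})/Tor]$: multiplying $\Phi$ by a unit $\pm g_0$ only scales $\Phi(k^\alpha)$ by $\pm k^{\langle\alpha,g_0\rangle}\ne 0$ for $k>1$, so it leaves the set $\{k>1\mid\Phi(k^\alpha)=0\}$ unchanged, and as in Remark~\ref{PF} the supremum is attained at a positive real by Perron--Frobenius. I expect this identification to be the only substantive point. By construction $\tau'=i^{-1}(\tau)$ has the same branches, switches and $\widehat\phi$-collapsing data as the bigon track $\tau\subset S$ — puncturing the bigon complementary regions changes neither the underlying $1$-complex $\tau$ nor Penner's explicit carrying of $\widehat\phi(\tau)$ — the only difference being that the equivariance group is now $T'$, the image of $H_1(S')$, instead of $T$, the image of $H_1(S)$, and that $i_*$ sends the deck translations of $\widetilde{\tau}'$ onto those of $\widehat{\tau}$ and $u'$ onto $u$. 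Hence $\pi(P_E'(t'))=P_E(t)$, $\pi(P_V'(t'))=P_V(t)$ entrywise and $\pi(u'I)=uI$, so applying the ring homomorphism $\pi$ to McMullen's formula (Theorem~\ref{formula}) $\Theta_{F'}=\det(u'I-P_E'(t'))/\det(u'I-P_V'(t'))$ yields
\[
\pi(\Theta_{F'})=\frac{\det(uI-P_E(t))}{\det(uI-P_V(t))}=\Phi,
\]
the divisions being exact already over the integral domain $\mathbb{Z}[H_1(N;\mathbb{Z})/Tor]$ (which also re-proves that $\Phi$ is well defined). Combining the two displays gives $\lambda(\alpha)=\sup\{k>1\mid\Phi_F(k^\alpha)=0\}$ for all $\alpha\in C$, as asserted. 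The main obstacle is the one genuinely geometric step just used — verifying that Penner's bigon-track construction on $S'$ is literally the $\pi$-pullback of the one on $S$ — which should follow by inspecting the explicit carrying data in \cite{Pe} together with the identity $\tau'=i^{-1}(\tau)$.
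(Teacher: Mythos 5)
Your proposal is correct and follows essentially the same route as the paper: transport to the punctured mapping torus $N'=M(S',\phi')$ via Lemma~\ref{nice1}, apply Theorem~\ref{dilatation} to $\Theta_{F'}$ there, use $\langle i^*\alpha,g\rangle=\langle\alpha,i_*g\rangle$ to push the evaluation down to $H_1(N;\mathbb{Z})/Tor$, and identify the resulting reduction of $\Theta_{F'}$ (your $\pi(\Theta_{F'})$, the paper's ``$P_{E'}(t')$, $P_{V'}(t')$ modulo $K$'' computed on the intermediate cover $\hat{N}'$) with $\Phi$ via the isomorphism $\hat\tau'\cong\hat\tau$. Your added remarks on the unit ambiguity and on exactness of the division under $\pi$ are harmless refinements of the same argument.
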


\begin{proof}
Let $N'$ be as in the proof of Lemma \ref{nice1}, let $H_1(N';\mathbb{Z})/Tor=\mathbb{Z}[u]\oplus T'$ with $T'$ correspond with image of $H_1(S';\mathbb{Z})$. Then we have exact sequence $0\rightarrow K\rightarrow H_1(N';\mathbb{Z})/Tor \rightarrow H_1(N;\mathbb{Z})/Tor \rightarrow 0$, here $K\subset T'$.

Let $\Theta_{F'}$ be the Teichmuller polynomial associated with fibered cone $C'\subset H^1(N';\mathbb{R})$ containing $[S']$. Since $\tau'\subset S'$ is an invariant train track of $\phi'$, we can use $\tau'$ to compute $\Theta_{F'}$. Let $\tilde{p}:\tilde{N'}\rightarrow N'$ be the maximal free abelian cover of $N'$, $\tilde{S}'$ be one component of $\tilde{p}^{-1}(S')$ and $\tilde{\tau'}=\tilde{p}^{-1}(\tau) \cap \tilde{S}'$. Then by Theorem \ref{formula}, we have $$\Theta_{F'}=\frac{det(uI-P_{E'}(t'))}{det(uI-P_{V'}(t'))}.$$ Here $P_{E'}(t')$ and $P_{V'}(t')$ are the induced map of $\tilde{\phi}':\tilde{S}'\rightarrow \tilde{S}'$ on the $\mathbb{Z}[T']$-modules of branches and switches of $\tilde{\tau'}$.

Since $\lambda(i^*(\alpha))=\lambda(\alpha)$ by Lemma \ref{simplify}, to compute $\lambda(\alpha)$ for $\alpha \in C$, we do not need to use the full power of $\Theta_{F'}$, but only need to take valuate of $\Theta_{F'}$ on $i^*(H^1(N;\mathbb{R}))$.
Let $\Theta_{F'}=\sum_g a_g\cdot g$, here $g\in H_1(N';\mathbb{Z})/Tor$, then for any $i^*(\alpha) \in C'$, by Theorem \ref{dilatation} we have that $\lambda(i^*(\alpha))$ is the greatest root of $$0=\Theta_{F'}(X^{i^*(\alpha)})=\sum_g a_g\cdot X^{\langle i^*(\alpha),g\rangle}=\sum_g a_g\cdot X^{\langle \alpha,i_*(g) \rangle}.$$
So for computing dilatation function $\lambda(\cdot)$ on $C$, we need only to compute $P_{E'}(t')$ and $P_{V'}(t')$ modulo $K$.

Let $\hat{p}:\hat{N'}\rightarrow N'$ be the free abelian cover given by $\pi_1(N')\rightarrow H_1(N';\mathbb{Z})/Tor\rightarrow H_1(N;\mathbb{Z})/Tor$. Then there exist inclusion $\hat{i}:\hat{N'}\rightarrow \hat{N}$ to the maximal free abelian cover of $N$. $\hat{N'}$ is an intermediate cover of $\tilde{p}:\tilde{N'}\rightarrow N'$, while the deck transformation group of $p:\tilde{N'}\rightarrow \hat{N'}$ is $K$. Let $\hat{S'}=p(\tilde{S'})$, $\hat{\tau'}=p(\tilde{\tau'})$ and $\hat{\phi}':\hat{S'}\rightarrow \hat{S'}$ be a lift of $\phi'$. Then we have $P_{E'}(t')=\hat{P}_E'(t')$ and $P_{V'}(t')=\hat{P}_V'(t')$ modulo $K$, here $\hat{P}_E'(t')$ and $\hat{P}_V'(t')$ are the matrices of $\hat{\phi}'$ action on the $\mathbb{Z}[T]$-module of branches and switches of $\hat{\tau'}$.

Since $\hat{\tau'}$ is isomorphic with $\hat{\tau}$ with $T$ and $\phi$ action, we have $\hat{P}_E'(t')=P_E(t)$ and $\hat{P}_V'(t')=P_V(t)$. So we have $\Theta_{F'}(X^{i^*(\alpha)})=\Phi(X^{\alpha})$, and the Proposition is true by Theorem \ref{dilatation}.
\end{proof}

Now we try to compute $\Phi$ from the two families of curves $\{a_i\}_{i=1}^m,\{b_j\}_{j=1}^n$ and presentation of $\phi$ directly.

Let $q:\hat{S}\rightarrow S$ be the covering map induced by the maximal abelian cover of $3$-manifold. Since $\phi$ is generic, each component of $q^{-1}(a_i)$ ($q^{-1}(b_j)$) is mapped to $a_i$ ($b_j$) by homeomorphism. After choosing a component of $q^{-1}(a_i)$ ($q^{-1}(b_j)$) to be $\hat{a}_i$ ($\hat{b}_j$), we have $q^{-1}(a_i)=T\cdot \hat{a}_i$ ($q^{-1}(b_j)=T\cdot \hat{b}_j$). Then we can construct a weighted graph $G$ with $T$ action, called the {\it intersection graph}. Each vertex of the intersection graph $G$ corresponds with a curve in $(\cup_{i=1}^m T\cdot \hat{a}_i)\cup(\cup_{j=1}^n T\cdot \hat{b}_j)$. Two vertices $x,y$ in $G$ are connected by an edge $e=e(x,y)$ if the corresponding curves in $\hat{S}$ intersects, the weight $w(e(x,y))$ is the intersection number $\#(x\cap y)$.

Given these information, we construct a $m\times n$ matrix $M(t)$ named {\it intersection matrix}. The entries in $M(t)$ are elements in $\mathbb{Z}[T]$: $M_{i,j}=\sum_{t\in T} e(\hat{a}_i,t\hat{b}_j)\cdot t$. For any vector $v=(v_1,\cdots,v_m)$ with $v_i\in\mathbb{Z}_{\geq 0}$, let
$$M^v=\left( \begin{array}{cc}
I_{m\times m}& diag(v_1,\cdots,v_m)\cdot M(t)\\
0 & I_{n\times n}
\end{array}\right).$$
For $w=(w_1,\cdots,w_n)$ with $w_i\in\mathbb{Z}_{\geq 0}$, let
$$M_w=\left( \begin{array}{cc}
I_{m\times m}& 0\\
diag(w_1,\cdots,w_n)\cdot M^T(t^{-1}) & I_{n\times n}
\end{array}\right).$$

For vectors $v$ and $w$ as above, let Dehn-twists $T_{a_1}^{v_1}\cdots T_{a_m}^{v_m}$ and $T_{b_1}^{-w_1}\cdots T_{b_n}^{-w_n}$ denoted by $T_a^v$ and $T_b^{-w}$ respectively. Then any $\phi \in \mathcal{D}(a^+,b^-)$ can be conjugated to $T_a^{v_1}T_b^{-w_1}\cdots T_a^{v_s}T_b^{-w_s}$ with  $v_i\ne \vec{0},\ w_i\ne \vec{0}$ for any $i$. With these notations, we have the following formula for $\Phi$.

\begin{prop}\label{Phi}
Suppose $\phi=T_a^{v_1}T_b^{-w_1}\cdots T_a^{v_s}T_b^{-w_s}\in \mathcal{D}(a^+,b^-)$ is a generic pseudo-Anosov map, and $\{a_i\}_{i=1}^m$ intersects $\{b_j\}_{j=1}^n$ at $r$ points. Then $\Phi=(u-1)^{r-m-n}\cdot det(uI-M_{w_s}M^{v_s}\cdots M_{w_1}M^{v_1}).$
\end{prop}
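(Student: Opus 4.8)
The plan is to compute $\Phi=\det(uI-P_E(t))/\det(uI-P_V(t))$ directly from the combinatorics of Penner's invariant bigon track $\tau$, first eliminating the switch module and then collapsing everything onto the $(m+n)$-dimensional ``curve module''. For the first step, the switch relations of the lifted bigon track $\hat\tau$ give a $\hat\phi$-equivariant complex $0\to\mathcal M(\hat\tau)\to\mathbb Z[T]^E\to\mathbb Z[T]^V$, the second map sending a weight on branches to, at each switch, the difference of the total weights on its two sides, and $\mathcal M(\hat\tau)$ being its kernel, the module of transverse measures on $\hat\tau$. Since $\hat\tau$ is connected this map has $\mathbb Z[T]$-torsion cokernel, so it becomes surjective after tensoring with the fraction field $\mathbb Q(T)$; hence over $\mathbb Q(T)$ we get $\det(uI-P_E)=\det(uI-P_V)\cdot\det(uI-\hat\phi_*|_{\mathcal M(\hat\tau)})$, and therefore $\Phi=\det(uI-\hat\phi_*|_{\mathcal M(\hat\tau)})$, the characteristic polynomial of the monodromy acting on transverse measures. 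From Penner's description of $\tau$ as a tangential smoothing of the $4$-valent graph $\bigcup_i a_i\cup\bigcup_j b_j$ one reads off $\dim_{\mathbb Q(T)}\mathcal M(\hat\tau)\otimes\mathbb Q(T)=|E|-|V|=r$, the number of intersection points of the $a$'s with the $b$'s, so $\Phi$ has $u$-degree $r$, matching the right-hand side.

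Next I would split $\mathcal M(\hat\tau)$ as a $\hat\phi$-module. Inside it sits the rank-$(m+n)$ \emph{curve submodule} $\mathcal C\cong\mathbb Z[T]^m\oplus\mathbb Z[T]^n$, whose $i$-th generator is the transverse measure running once parallel to $\hat a_i$ and whose $j$-th generator runs once parallel to $\hat b_j$; a complementary rank-$(r-m-n)$ subspace is spanned by the transverse measures $\mu_k$ supported in small disks around the $r-m-n$ crossings that become bigon complementary regions of $\tau$. Both families of Dehn twists carry $\mathcal C$ into itself, and -- this is the step I expect to be the main obstacle -- a local analysis of Penner's smoothing near a single crossing is needed to show that each $T_{a_i}$ and each $T_{b_j}^{-1}$ sends every $\mu_k$ to $\mu_k$ modulo $\mathcal C$: such a twist either misses the bigon entirely or merely drags a $\mathcal C$-summand across it, and, since the bigon punctures are fixed points of $\phi$ whose closed orbits represent the class $u$ (as in the proof of Lemma \ref{nice1}), the lift $\hat\phi$ corresponding to $u$ fixes their lifts $\hat p_k$. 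Granting this, $\hat\phi_*$ is block upper triangular with respect to $\mathcal M(\hat\tau)\cong\mathcal C\oplus(\text{span of the }\mu_k)$ and acts as the identity on $\mathcal M(\hat\tau)/\mathcal C$, which gives
\[\det(uI-\hat\phi_*|_{\mathcal M(\hat\tau)})=(u-1)^{r-m-n}\cdot\det(uI-\hat\phi_*|_{\mathcal C}).\]

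Finally I would compute $\hat\phi_*|_{\mathcal C}$ in the coordinates $(x_1,\dots,x_m,y_1,\dots,y_n)$. Since the $a_i$ are pairwise disjoint, the combined twist $T_a^{v_\ell}=T_{a_1}^{(v_\ell)_1}\cdots T_{a_m}^{(v_\ell)_m}$ fixes every $b_j$-weight and changes the $a_i$-weight by $(v_\ell)_i$ times the equivariant geometric intersection of the carried lamination with $a_i$, namely $\sum_j M_{ij}(t)\,y_j$; so $T_a^{v_\ell}$ acts by $(x,y)\mapsto(x+\mathrm{diag}(v_\ell)M(t)\,y,\ y)$, which is exactly the matrix $M^{v_\ell}$. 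Symmetrically, since a lift $\hat b_j$ meets the translate $t^{-1}\hat a_i$ precisely when $\hat a_i$ meets $t\hat b_j$, and since in the coordinates adapted to the $\phi$-invariant foliations the negative twists along the $b_j$ increase the $b$-weights (this is Penner's compatibility of the $a^+$ and $b^-$ twists), $T_b^{-w_\ell}$ acts by $M_{w_\ell}$. Composing these in the order dictated by $\phi=T_a^{v_1}T_b^{-w_1}\cdots T_a^{v_s}T_b^{-w_s}$ gives $\hat\phi_*|_{\mathcal C}=M_{w_s}M^{v_s}\cdots M_{w_1}M^{v_1}$, and combining with the displayed identity above yields $\Phi=(u-1)^{r-m-n}\det(uI-M_{w_s}M^{v_s}\cdots M_{w_1}M^{v_1})$, as claimed.
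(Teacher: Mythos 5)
Your strategy---pass to the measure module $\mathcal M(\hat\tau)=\ker(\mathbb Z[T]^E\to\mathbb Z[T]^V)$ and split off the curve submodule $\mathcal C$---is genuinely different from the paper's, which never leaves the branch module. There one takes the free rank-$(2r-m-n)$ submodule $V_1\subset\mathbb Z[T]^E$ generated by the differences $[x_{i,1}]-[x_{i,k}]$ of branches lying on the same curve $\hat a_i$ (resp.\ $\hat b_j$): each lifted twist adds the \emph{same} sum of branches to every branch on a given curve, so $\hat\phi^*$ is visibly the identity on $V_1$; the quotient is the rank-$(m+n)$ curve module on which the twists act by $M^{v}$ and $M_w$; and $\det(uI-P_V)=(u-1)^r$ because the collapsing fixes each switch. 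The formula then drops out as $(u-1)^{2r-m-n}\det(uI-\cdots)/(u-1)^r$, with no analysis of transverse measures at all.

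The gap in your version is exactly the step you flag, and I do not think it can be repaired as stated. A weight system supported only on the two branches bounding a bigon is \emph{not} an element of $\mathcal M(\hat\tau)$: at each cusp of the bigon those two branches are tangent, hence lie on the same side of the switch, so the switch condition reads $1+1=0$ and fails. So the proposed $\mu_k$ do not exist as local objects, and in any case the number of bigon regions of $\tau$ is governed by the configuration of the curves and $\chi(S)$, not by $r,m,n$ alone, so ``one generator per bigon'' cannot give a rank-$(r-m-n)$ complement in general. Two further assertions are unproved: (i) the reduction $\Phi=\det(uI-\hat\phi_*|_{\mathcal M(\hat\tau)})$ needs the switch map to intertwine $P_E$ with $P_V$ and to be surjective over $\mathbb Q(T)$---connectedness alone does not give this (the boundary map of a connected graph already has nontrivial cokernel); and (ii) $P_E$ acts on the module \emph{generated by} branches while measures sit in its dual, so identifying the action on $a_i$-weights with $M^{v}$ rather than a transpose with $t\mapsto t^{-1}$ requires exactly the bookkeeping the paper does explicitly. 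I recommend reworking the argument upstairs in $\mathbb Z[T]^E$ via the submodule $V_1$, which sidesteps all three issues.
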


\begin{proof}
Let $\hat{T}_{a_i}$, $\hat{T}^{-1}_{b_j}$ be the lift of $T_{a_i}$, $T^{-1}_{b_j}$ respectively. For each $\hat{a}_i\subset\hat{\tau}$ ($\hat{b}_j\subset \hat{\tau}$), we choose a branch $x_{i,1}\subset\hat{a}_i$ ($y_{j,1}\subset\hat{b}_j$). Let $x_{i,2},\cdots,x_{i,k_i}$ be the other branches of $\hat{\tau}$ lying on $\hat{a}_i$, similarly for $\hat{b}_j$.

Then for $[x_{i,1}]-[x_{i,k}]\in \mathbb{Z}[T]^E$, we have $(\hat{T}^{-1}_{b_j})^*([x_{i,1}]-[x_{i,k}])=[x_{i,1}]-[x_{i,k}]$ and $(\hat{T}_{a_{i'}})^*([x_{i,1}]-[x_{i,k}])=[x_{i,1}]-[x{i,k}]$ if $i\ne i'$. Moreover, $(\hat{T}_{a_{i}})^*[x_{i,1}]=[x_{i,1}]+\sum_b [b]$, here $b$ runs over all the branches of $\hat{\tau}$ that intersects $\hat{a}_i$ to the left (or right, which depends on the presentative of $\hat{T}_{a_{i}}$). At the same time, we also have $(\hat{T}_{a_{i}})^*[x_{i,k}]=[x_{i,k}]+\sum_b [b]$, so $(\hat{T}_{a_{i}})^*([x_{i,1}]-[x_{i,k}])=[x_{i,1}]-[x_{i,k}]$. Now we have $\hat{\phi}^*([x_{i,1}]-[x_{i,k}])=[x_{i,1}]-[x_{i,k}]$. The same argument hold for $[y_{j,1}]-[y_{j,l}]$.

Let $V_1$ be the $\mathbb{Z}[T]$-submodule of $\mathbb{Z}[T]^E$ freely generated by $[x_{i,1}]-[x_{i,k}]$ and $[y_{j,1}]-[y_{j,l}]$, here $i\in\{1,\cdots,m\}, k\in\{2,\cdots,k_i\}, j\in\{1,\cdots,n\}, l\in\{2,\cdots,l_j\}$. $V_1$ is a $\hat{\phi}^*$-invariant submodule of $\mathbb{Z}[T]^E$ with $\hat{\phi}^*|_{V_1}=id_{V_1}$. The $\mathbb{Z}[T]$-dimension of $V_1$ is $ \# \{$branches of $\tau\}-m-n$. It is easy to check that $\tau$ has $2r$ branches, so $dim_{\mathbb{Z}[T]}V_1=2r-m-n$.

To simplify notions, let $x_i=x_{i,1}$ and $y_j=y_{j,1}$. We have another $\mathbb{Z}[T]$-submodule $V_2$ of $\mathbb{Z}[T]^E$ which is freely generated by $x_1,\cdots,x_m$ and $y_1,\cdots,y_n$. then $\mathbb{Z}[T]^E=V_1\oplus V_2$. We have $(\hat{T}^{-1}_{b_j})^*([x_i])=[x_i]$ and $(\hat{T}_{a_{i'}})^*([x_i])=[x_i]$ if $i\ne i'$. Moreover, $(\hat{T}_{a_i})^*([x_i])=[x_i]+\sum_{j=1}^m M_{i,j}(t)[y_j]\ mod\ V_1$. Similarly, $(\hat{T}_{b_j})^*([y_j])=[y_j]+\sum_{i=1}^n M_{j,i}(t^{-1})[x_i]\ mod\ V_1$, and all the other Dehn-twists act trivially on $y_j$. These actions coincide with the action of $M^v$ and $M_w$. Since $\phi=T_a^{v_1}T_b^{-w_1}\cdots T_a^{v_s}T_b^{-w_s}$ is composition of Dehn-twists, the action matrix of $\hat{\phi}^*$ on $\mathbb{Z}[T]^E/V_1$ is given by $M_{w_s}M^{v_s}\cdots M_{w_1}M^{v_1}$. So $det(uI-P_E(t))=(u-1)^{2r-m-n}\cdot det(uI-M_{w_s}M^{v_s}\cdots M_{w_1}M^{v_1})$.

Since $\hat{\phi}$ fixes each switch of $\hat{\tau}$, we have $\hat{\phi}^*=id_{\mathbb{Z}[T]^V}$ on switches. So $det(uI-P_V(t))=(u-1)^r$, and $\Phi=(u-1)^{r-m-n}\cdot det(uI-M_{w_s}M^{v_s}\cdots M_{w_1}M^{v_1})$.
\end{proof}

An interesting property is, if we twist the $a$ curves and $b$ curves proportionally each time, then the dilatation function is symmetric:

\begin{prop}\label{algsymmetry}
Let $v=(v_1,\cdots,v_m),\ w=(w_1,\cdots,w_n)$ be two vectors with $v_i,w_j\in \mathbb{Z}_+$, and $x_k,y_k\in \mathbb{Z}_+,i=1,\cdots,s$. Suppose $\phi=T_a^{x_1v}T_b^{-y_1w}\cdots T_a^{x_sv}T_b^{-y_sw}\in \mathcal{D}(a^+,b^-)$ is a generic pseudo-Anosov map on surface $S$. Then $\Phi(u,t)=\Phi(u,t^{-1})$ and $m_F=\frac{[S]}{|\chi(S)|}$.
\end{prop}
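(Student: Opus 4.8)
The plan is to prove the symmetry $\Phi(u,t)=\Phi(u,t^{-1})$ by a direct computation with the matrices of Proposition~\ref{Phi}, and then to deduce the value of $m_F$ from the resulting involutive symmetry. For the symmetry, put $D_v=\operatorname{diag}(v_1,\dots,v_m)$, $D_w=\operatorname{diag}(w_1,\dots,w_n)$, and $P(t)=M_{y_sw}M^{x_sv}\cdots M_{y_1w}M^{x_1v}$; by Proposition~\ref{Phi} it suffices, up to the $t$-free factor $(u-1)^{r-m-n}$, to show $\det(uI_{m+n}-P(t))=\det(uI_{m+n}-P(t^{-1}))$. First I would write $M^{x_kv}=I+x_k\Lambda$ and $M_{y_kw}=I+y_k\Xi$, where $\Lambda$ is the block matrix whose only nonzero block is the upper right one $D_vM(t)$, and $\Xi$ the one whose only nonzero block is the lower left one $D_wM(t^{-1})^T$. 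Then $\Lambda^2=\Xi^2=0$, while $\Lambda\Xi$ and $\Xi\Lambda$ are the block-diagonal matrices with single nonzero block $W:=D_vM(t)D_wM(t^{-1})^T$ and $W':=D_wM(t^{-1})^TD_vM(t)$ respectively. Since the twist word $\phi=T_a^{x_1v}T_b^{-y_1w}\cdots T_a^{x_sv}T_b^{-y_sw}$ strictly alternates between $a$-twists and $b$-twists, in every power $P(t)^j$ the factors $\Lambda$ and $\Xi$ occur in a cyclically alternating pattern; hence, on expanding $\operatorname{tr}(P(t)^j)$ and using $\Lambda^2=\Xi^2=0$, the only surviving terms are the closed alternating loops, and each collapses (up to a cyclic rotation of its factors) to $(\Lambda\Xi)^l$ or $(\Xi\Lambda)^l$, of trace $\operatorname{tr}(W^l)=\operatorname{tr}(W'^l)$. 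Thus each $\operatorname{tr}(P(t)^j)$, and hence — by Newton's identities — each coefficient of $\det(uI_{m+n}-P(t))$, is a polynomial in $u$, in $m+n$, and in the numbers $\operatorname{tr}(W(t)^l)$, $l\ge 1$. Finally $\operatorname{tr}(W(t^{-1})^l)=\operatorname{tr}(W(t)^l)$ for all $l$: transposing inside the trace and cycling, $\operatorname{tr}\bigl((D_vM(t)D_wM(t^{-1})^T)^l\bigr)=\operatorname{tr}\bigl((M(t^{-1})D_wM(t)^TD_v)^l\bigr)=\operatorname{tr}\bigl((D_vM(t^{-1})D_wM(t)^T)^l\bigr)$. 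Combining the last three facts gives $\Phi(u,t)=\Phi(u,t^{-1})$.

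The one step needing genuine care is the bookkeeping in the expansion of $\operatorname{tr}(P(t)^j)$ — verifying precisely that no non-alternating term contributes, equivalently that $\det(uI_{m+n}-P(t))$ sees $M(t)$ only through the similarity class of $W$. For $s=1$ this is transparent: a Schur complement gives $\det(uI_{m+n}-M_{y_1w}M^{x_1v})=(u-1)^m\det\bigl((u-1)I_n-\tfrac{x_1y_1u}{u-1}W'\bigr)$, and $W'(t^{-1})=D_wW'(t)^TD_w^{-1}$ is conjugate to $W'(t)$; the general $s$ can be organized as an iteration of this reduction. This is where I expect the real work to lie.

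For the minimal point, $\Phi(u,t)=\Phi(u,t^{-1})$ says exactly that $\Phi=\sum_g a_g\,g$ is invariant under the involution $\iota$ of $H_1(N;\mathbb Z)/\mathrm{Tor}=\mathbb Z[u]\oplus T$ with $\iota(u)=u$ and $\iota|_T=-\mathrm{id}$, i.e. $a_{\iota g}=a_g$. Hence for every $\alpha\in H^1(N;\mathbb R)$ the exponential sums coincide: $\Phi(k^{\iota^*\alpha})=\sum_g a_g\,k^{\langle\alpha,\iota g\rangle}=\sum_g a_{\iota g}\,k^{\langle\alpha,g\rangle}=\Phi(k^\alpha)$, where $\iota^*$ is the induced involution of $H^1(N;\mathbb R)$, whose fixed subspace is exactly $\mathbb R[S]$ since $[S]$ is dual to $u$. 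I would then check that $\iota^*$ preserves the fibered cone $C$: by Proposition~\ref{simplify} the function $\mu(\alpha):=\sup\{k>1:\Phi(k^\alpha)=0\}$ agrees with $\lambda_C$ on $C$, so it is finite and continuous on $C$ and tends to $\infty$ at $\partial C$ (Fried), while $\mu=\mu\circ\iota^*$ and $[S]\in C\cap\iota^*C$; were $\iota^*C$ not contained in $C$, a segment inside the pointed convex cone $\iota^*C$ from $[S]$ to a point of $\iota^*C\setminus C$ would cross $\partial C$ at a point of the open set $\iota^*C$, where $\mu$ is finite, contradicting $\mu\to\infty$ along that segment. Given $\iota^*C=C$, Lemma~\ref{nice1} (dual class $|\chi(S)|u$, fixed by $\iota^*$) shows $\iota^*$ preserves the Thurston norm on $C$, hence the fibered face $F$, and $\lambda_C\circ\iota^*=\lambda_C$ on $C$. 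By Matsumoto's uniqueness of the minimum of $\lambda_C|_F$ we obtain $\iota^*m_F=m_F$, so $m_F\in\mathbb R[S]\cap F$; writing $m_F=c[S]$ with $c>0$ (both $[S]$ and $m_F$ lie in the pointed convex cone $C$) and using $1=\|m_F\|=c\|[S]\|=c\,|\chi(S)|$, we conclude $m_F=[S]/|\chi(S)|$.
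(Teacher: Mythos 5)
Your proposal is correct, and the first half takes a genuinely different route from the paper's. For the symmetry $\Phi(u,t)=\Phi(u,t^{-1})$, the paper proves by induction that the product $P(t)=M_{y_sw}M^{x_sv}\cdots M_{y_1w}M^{x_1v}$ has block entries of the form $I+P_1(AB)$, $A\,P_2(BA)$, $B\,P_3(AB)$, $I+P_4(BA)$ with $A=D\,M(t)$, $B=D'M^T(t^{-1})$, and then compares Schur-complement expansions of $\det(uI-P(t))$ and of its transpose after substituting $t\mapsto t^{-1}$. You instead write each factor as $I+x_k\Lambda$ or $I+y_k\Xi$ with $\Lambda^2=\Xi^2=0$, observe that only even-length alternating words survive in $\operatorname{tr}(P(t)^j)$ (odd alternating words are strictly block-triangular, hence traceless), reduce everything to $\operatorname{tr}(W(t)^l)$ with $W=D_vM(t)D_wM(t^{-1})^T$, and invoke $\operatorname{tr}(W(t^{-1})^l)=\operatorname{tr}(W(t)^l)$ via transpose-plus-cyclicity together with Newton's identities (valid here since $\mathbb{Z}[T]$ sits in a $\mathbb{Q}$-algebra). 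This is complete as written — your worry about ``bookkeeping'' is already resolved by $\Lambda^2=\Xi^2=0$, and the Schur-complement iteration you sketch as a fallback is unnecessary — and it isolates exactly where the proportionality hypothesis enters (all $a$-factors are multiples of one $\Lambda$, all $b$-factors of one $\Xi$). For the second half your argument is essentially the paper's (the involution $\iota$ with $\iota(u)=u$, $\iota|_T=-\mathrm{id}$ fixes $\lambda$ and hence, by uniqueness of the minimum, fixes $m_F$, whose fixed locus is $\mathbb{R}[S]$), but you add a step the paper leaves implicit: the verification via Fried's blow-up at $\partial C$ that $\iota^*C=C$, so that $\lambda(\tfrac{1}{|\chi(S)|}[S]-x)$ is actually defined when $\tfrac{1}{|\chi(S)|}[S]+x\in F$. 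That is a genuine (if small) gap in the paper's write-up which your argument closes.
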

\begin{proof}
Let $D=diag(v_1,\cdots,v_m)$ and $D'=diag(w_1,\cdots,w_n)$

By Proposition \ref{Phi}, $\Phi(u,t)=(u-1)^{r-m-n}\cdot det(uI-(M_{y_sw}M^{x_sv})\cdots (M_{y_1w}M^{x_1v}))$. By direct computation, $$M_{y_iw}M^{x_iv}=\left( \begin{array}{cc}
I_{m\times m}& x_iD\cdot M(t)\\
y_i D'\cdot M^T(t^{-1}) & I_{n\times n}+x_iy_iD'\cdot M^T(t^{-1})\cdot D\cdot M(t)
\end{array}\right).$$

By induction, we can show that $$(M_{y_sw}M^{x_sv})\cdots (M_{y_1w}M^{x_1v})=
\left(
\begin{array}{cc}
I_{m\times m} +E(t)& F(t)\\
G(t) & I_{n\times n}+H(t)
\end{array}\right)$$
$$=\left(
\begin{array}{cc}
I_{m\times m} +P_1(D\cdot M(t)\cdot D' \cdot M^T(t^{-1}))& D\cdot M(t)\cdot P_2(D'\cdot M^T(t^{-1})\cdot D\cdot M(t))\\
D'\cdot M^T(t^{-1})\cdot P_3(D\cdot M(t)\cdot D' \cdot M^T(t^{-1})) & I_{n\times n}+P_4(D'\cdot M^T(t^{-1})\cdot D\cdot M(t))
\end{array}\right),$$
here $P_1,P_2,P_3,P_4$ are polynomials.

So $$\frac{\Phi(u,t)}{(u-1)^{r-m-n}}= det \left(
\begin{array}{cc}
(u-1)I_{m\times m}-E(t)& -F(t)\\
-G(t) & (u-1)I_{n\times n}-H(t)
\end{array}\right)$$
$$=det((u-1)I_{m\times m} -E(t))\cdot det((u-1)I_{n\times n}-H(t)-G(t)\cdot[(u-1)I_{m\times m}-E(t)]^{-1}\cdot F(t)).$$

By taking transpose of the matrix, we get
$$\frac{\Phi(u,t)}{(u-1)^{r-m-n}}= det \left(
\begin{array}{cc}
(u-1)I_{m\times m} -E^T(t)& -G^T(t)\\
-F^T(t) & (u-1)I_{n\times n}-H^T(t)
\end{array}\right)$$
$$=det((u-1)I_{m\times m}-E^T(t))\cdot det((u-1)I_{n\times n}-H^T(t)-F^T(t)\cdot[(u-1)I_{m\times m} -E^T(t)]^{-1}\cdot G^T(t)).$$

We have expression $$E(t)=P_1(D\cdot M(t)\cdot D' \cdot M^T(t^{-1})),\ F(t)=D\cdot M(t)\cdot P_2(D'\cdot M^T(t^{-1})\cdot D\cdot M(t)),$$
$$G(t)=D'\cdot M^T(t^{-1})\cdot P_3(D\cdot M(t)\cdot D' \cdot M^T(t^{-1})), \ H(t)=P_4(D'\cdot M^T(t^{-1})\cdot D\cdot M(t)).$$

Using the expression above, we can check that
$$det((u-1)I_{m\times m} -E(t))=det((u-1)I_{m\times m} -E^T(t^{-1})),$$ and
$$det((u-1)I_{n\times n}-H(t)-G(t)\cdot[(u-1)I_{m\times m} -E(t)]^{-1}\cdot F(t))=$$
$$det((u-1)I_{n\times n}-H^T(t^{-1})-F^T(t^{-1})\cdot[(u-1)I_{m\times m} -E^T(t^{-1})]^{-1}\cdot G^T(t^{-1})).$$ So $\Phi(u,t)=\Phi(u,t^{-1})$.

Let $(u,t_1,\cdots,t_{b-1})$ be a basis of $H_1(N;\mathbb{Z})/Tor$, with $([S],\alpha_1,\cdots,\alpha_{b-1})$ be the dual basis of $H^1(N;\mathbb{Z})$. By Lemma \ref{nice1}, we have $F\subset \{\frac{1}{|\chi(S)|}[S]+x_1\alpha_1+\cdots+x_{b-1}\alpha_{b-1}|x_i\in\mathbb{R}\}$. Since $\Phi(u,t)=\Phi(u,t^{-1})$, for any $x=x_1\alpha_1+\cdots+x_{b-1}\alpha_{b-1}$ and $\frac{1}{|\chi(S)|}[S]+x\in F$, we have $\lambda(\frac{1}{|\chi(S)|}[S]+x)=\lambda(\frac{1}{|\chi(S)|}[S]-x)$. By the uniqueness of minimal point $m_F$, $m_F=\frac{1}{|\chi(S)|}[S]$.

\end{proof}

\begin{rem}
In Proposition \ref{algsymmetry}, the minimal point $m_F$ is rational since the dilatation function $\lambda(\cdot)$ is symmetric on fibered face $F$. However, the symmetric property of $\lambda(\cdot)$ here does not have an immediate geometric interpretation as in Proposition \ref{symmetry}.
\end{rem}

\subsection{An Irrational Example from Penner's Construction}\label{numerical2}
Now we give an example which shows that if the Dehn-twists on $a$-curves and $b$-curves are not proportional as in Proposition \ref{algsymmetry}, then the minimal point $m_F$ may not be rational.

Let surface $S$ and the two families of curves $\{a_1,a_2,a_3\}$ and $\{b_1,b_2\}$ be as shown in Figure 7. Since the $b$-curves are quite complicated, we draw $b_1$ as a red curve and $b_2$ as a pink curve.

\begin{center}
\psfrag{a}[]{\color{blue} $a_1$} \psfrag{b}[]{\color{blue} $a_2$} \psfrag{c}[]{\color{blue} $a_3$} \psfrag{d}[]{\color{red} $b_1$} \psfrag{e}[]{\color{magenta} $b_2$} \psfrag{t}[]{$t$}
\includegraphics[width=5in]{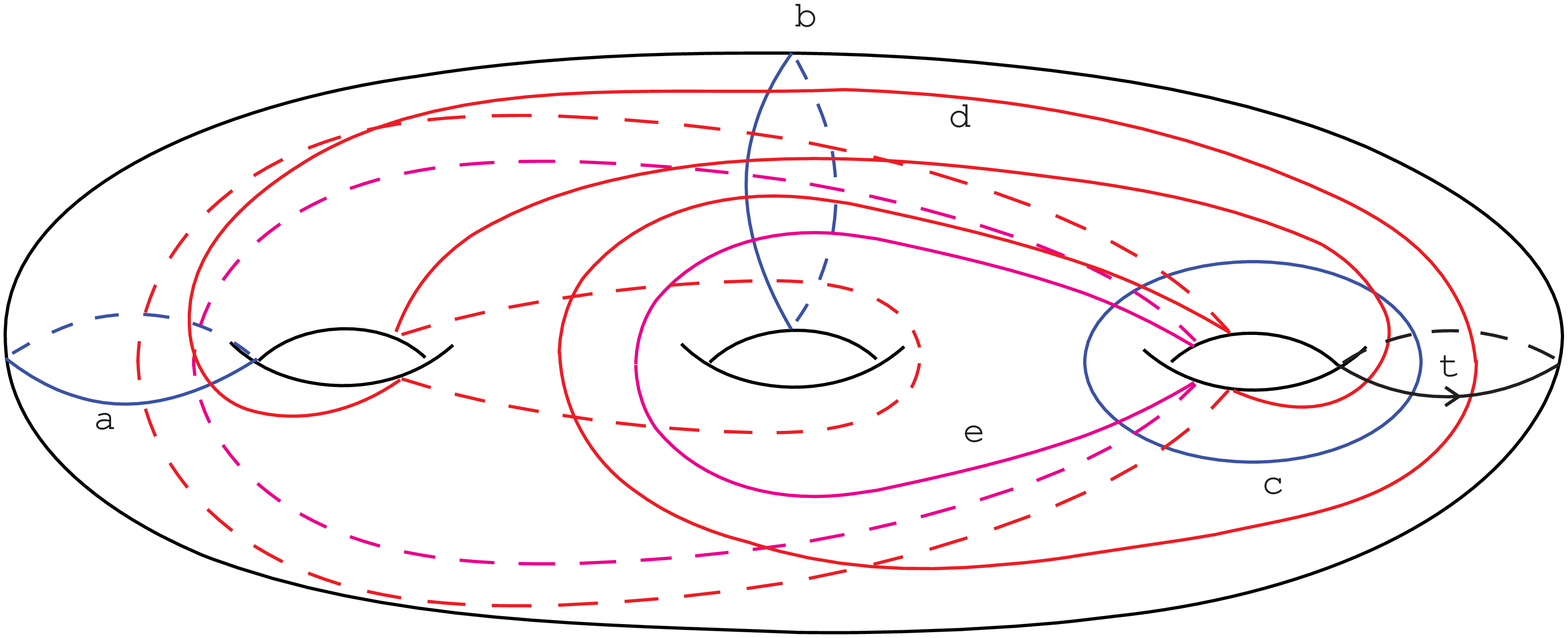}
\vskip 0.5 truecm
 \centerline{Figure 7}
\end{center}

Let's take pseudo-Anosov map $\phi=T_a^{(1,1,1)}\cdot T_b^{(-1,-1)}\cdot T_a^{(2,1,2)}\cdot T_b^{(-2,-1)}\in \mathcal{D}(a^+,b^-)$, for which the Dehn-twists along $a$-curves and $b$-curves are not proportional.

We can check that $\phi$ is generic. For the mapping torus $N=M(S,\phi)$, $H_1(N;\mathbb{Z})/Tor=\mathbb{Z}[u]\oplus \mathbb{Z}[t]$. Here $u$ is given by Lemma \ref{nice1} and $t$ is as shown in Figure 7.

Let $\tilde{N}$ be the maximal free abelian cover of $N$, and $\tilde{S}$ be one component of the preimage of $S$. For the preimage of $a$-curves and $b$-curves in $\tilde{S}$, we can get the intersection graph $G$, which is shown in Figure 8.

\begin{center}
\psfrag{a}[]{\color{blue} $a_1$} \psfrag{b}[]{\color{blue} $a_2$} \psfrag{c}[]{\color{blue} $a_3$} \psfrag{d}[]{\color{red} $b_1$} \psfrag{e}[]{\color{magenta} $b_2$} \psfrag{f}[]{\color{blue} $t^{-1}a_1$} \psfrag{g}[]{\color{blue} $t^{-1}a_2$} \psfrag{h}[]{\color{blue} $t^{-1}a_3$} \psfrag{i}[]{\color{magenta} $t^{-1}b_2$} \psfrag{j}[]{\color{blue} $ta_2$} \psfrag{k}[]{\color{red} $tb_1$}
\psfrag{l}[]{$1$} \psfrag{m}[]{$2$} \psfrag{n}[]{$4$} \psfrag{o}[]{$\cdots$}
\includegraphics[width=5.5in]{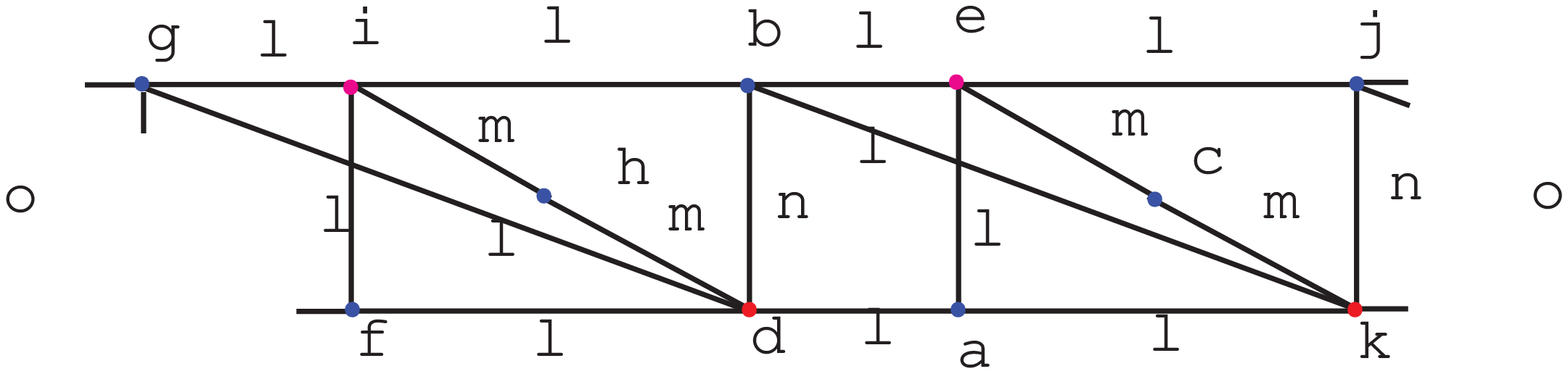}
\vskip 0.5 truecm
 \centerline{Figure 8}
\end{center}

From Figure 8, we can get the intersection matrix:
$M(t)=\left(
\begin{array}{cc}
t+1 & 1\\
t+4 & 1+t^{-1}\\
2t & 2
\end{array}\right).$

By Proposition \ref{Phi} and direct computation, we get
$\frac{\Phi(u,t)}{(u-1)^{14-5}}=$\\
$det(uI-M_{(2,1)}M^{(2,1,2)}M_{(1,1)}M^{(1,1,1)})=(u-1)\cdot (u^4-(78t^2+785t+1929+779t^{-1}+77t^{-2})u^3+(25t^2+2673t+21326+2673t^{-1}+25t^{-2})u^2-(77t^2+779t+1929+785t^{-1}+78t^{-2})u+1).$

For this example, $\Phi(u,t)\ne \Phi(u,t^{-1})$, so the dilatation function is not symmetric as in Proposition \ref{algsymmetry}. Let $(\alpha_1,\alpha_2)$ be a basis of $H^1(N;\mathbb{Z})$ dual with $(u,t)$. Then by Theorem \ref{cone}, Lemma \ref{nice1} and the formula of $\Phi(u,t)$ above, we get the fibered face $F=\{\frac{1}{4}\alpha_1+s\alpha_2|\ s\in (-1/8,1/8)\}$.

Let the minimal point $m_F=\frac{1}{4}\alpha_1+s\alpha_2$, by solving the equations numerically by Mathematica (\cite{Math}), we have $s=0.0001117568645\dots$, while $\frac{1}{4s}=2236.999051\dots$. However, by the algebraic number theory method as in section 5.2, if $\frac{1}{4s}$ is a rational number, its denominator must be less or equal to $40$, which is impossible. So $A(S,\phi)\ne \mathbb{Q}$ here.

Now let's take cohomology class $n\alpha_1+\alpha_2$ with $n\geq 3$, $n\alpha_1+\alpha_2$ lies in the fibered cone $C$. While $\|n\alpha_1+\alpha_2\|=4n$, which is represented by a closed surface with genus $2n+1$ with $n\geq 3$.

On the other hand, for the example $(S,\phi)$ we constructed in this subsection, $S$ has a double cover $S'$ given by $\theta: H_1(S;\mathbb{Z})\rightarrow \mathbb{Z}_2$. Here $\theta$ is defined by $\theta([t])= \bar{1}$ while $\theta(a_i)=\theta(b_j)=\bar{0}$. It is easy to check that $\phi$ lifts to $\phi':S'\rightarrow S'$. Since $S'$ has genus $5$ and $A(S,\phi)$ is covering invariant (Proposition \ref{cover}), we get a genus $5$ example with $A(S,\phi)\ne\mathbb{Q}$.

So we have the following theorem parallel with Theorem \ref{even}.
\begin{thm}\label{odd}
For any closed genus $2n+1$ surface $S$ with $n\geq 1$, there exists pseudo-Anosov map $\phi$ on $S$, such that $A(S,\phi)\ne\mathbb{Q}$.
\end{thm}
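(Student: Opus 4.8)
The plan is to bootstrap from the single explicit Penner example $(S,\phi)$ of this subsection, in which $S=\Sigma_{3,0}$ (the relation $\|\alpha_1\|=|\chi(S)|$ together with $F=\{\tfrac14\alpha_1+s\alpha_2\mid s\in(-\tfrac18,\tfrac18)\}$ forces $|\chi(S)|=4$) and $A(S,\phi)\ne\mathbb{Q}$. The irrationality here comes from the scheme of Section \ref{example}: assuming $m_F=\tfrac14\alpha_1+s\alpha_2$ is rational, the algebraic number theory argument bounds the denominator of $\tfrac1{4s}$ by $40$, whereas solving the defining equations numerically gives $\tfrac1{4s}=2236.999\ldots$, which is not a rational of denominator $\le 40$. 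This already produces the genus-$3$ case, $n=1$.

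For $n\ge 3$ I would use that $A_C$ depends only on the fibered cone $C$, not on the particular fiber. By Theorem \ref{cone}, Lemma \ref{nice1}, and the computed $\Phi(u,t)$, the fibered cone is $C=\{x_1\alpha_1+x_2\alpha_2\mid x_1>|x_2|\}$, so the primitive class $\beta_n=n\alpha_1+\alpha_2$ lies in $C$ for every $n\ge 2$; its Thurston norm is $\|\beta_n\|=\langle\beta_n,|\chi(S)|u\rangle=\langle n\alpha_1+\alpha_2,4u\rangle=4n$. Hence the fiber surface dual to $\beta_n$ is closed of genus $g$ with $2g-2=4n$, i.e.\ $g=2n+1$, and its pseudo-Anosov monodromy $\phi_n$ satisfies $A(S_n,\phi_n)=A_C=A(S,\phi)\ne\mathbb{Q}$. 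This settles all odd genera $\ge 7$.

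The only remaining genus is $5$ ($n=2$), which I would obtain by a finite cover. The homomorphism $\theta\colon H_1(S;\mathbb{Z})\to\mathbb{Z}_2$ with $\theta([t])=\bar 1$ and $\theta(a_i)=\theta(b_j)=\bar 0$ defines a connected double cover $S'\to S$; since all twisting curves lie in $\ker\theta$, each $T_{a_i}$ and $T_{b_j}^{-1}$ lifts, so $\phi$ lifts to a pseudo-Anosov $\phi'$ on $S'$. As $\chi(S')=2\chi(S)=-8$, $S'$ has genus $5$, and by the covering invariance of the invariant (Lemma \ref{cover}, the cover being regular) $A(S',\phi')=A(S,\phi)\ne\mathbb{Q}$. (Alternatively $\beta_2\in C$ also yields genus $5$ directly, but the double cover keeps the exposition parallel to Theorem \ref{even}.)

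The main obstacle is entirely in establishing the seed example's irrationality, i.e.\ the work carried out in Section \ref{numerical2}: verifying that $\phi$ is generic, computing the intersection matrix $M(t)$ and hence $\Phi(u,t)$ via Proposition \ref{Phi}, running the algebraic-integer/fractional-ideal bound of Section \ref{example} to cap the denominator of $\tfrac1{4s}$ at $40$, and finally a sufficiently precise numerical solve to exclude every candidate rational. These steps are routine but computation-heavy and error-prone; once the seed is secured, the passage to all odd genera is immediate from cone invariance plus one double cover.
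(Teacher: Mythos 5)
Your proposal is correct and follows the paper's proof essentially verbatim: the seed example on $\Sigma_{3,0}$ gives genus $3$, the classes $n\alpha_1+\alpha_2$ in the same fibered cone give genus $2n+1$ for $n\ge 3$, and the double cover determined by $\theta$ gives genus $5$ via covering invariance. One correction: from $F=\{\tfrac14\alpha_1+s\alpha_2\mid s\in(-\tfrac18,\tfrac18)\}$ the fibered cone is $C=\{x_1\alpha_1+x_2\alpha_2\mid x_1>2|x_2|\}$ (not $x_1>|x_2|$, which is the cone of the Section~\ref{example} example), so $\beta_n=n\alpha_1+\alpha_2$ lies in $C$ only for $n\ge 3$ and your parenthetical alternative for genus $5$ via $\beta_2$ fails, since $\beta_2$ sits on $\partial C$; this does not harm the proof because your primary route for genus $5$, the double cover, is the one that works.
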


With Theorem \ref{onecusp}, \ref{even} and \ref{odd} together, we get Theorem \ref{general} in the introduction.

\section{Other Examples}\label{numerical3}
Proposition \ref{special} tells us on surfaces $\Sigma_{0,4}$, $\Sigma_{1,2}$ and $\Sigma_{2,0}$, $A(S,\phi)=\mathbb{Q}$ always holds for any pseudo-Anosov map $\phi$. However, Theorem \ref{general} tells us there exists pseudo-Anosov map $\phi$ on all the possible closed and one punctured surfaces with $A(S,\phi)\ne \mathbb{Q}$. The remaining "small" surfaces are $\Sigma_{0,5}$ and $\Sigma_{1,3}$, both of them have Euler characteristic $-3$.

In this section, we will give two pseudo-Anosov maps on $\Sigma_{0,5}$ and $\Sigma_{1,3}$, with $A(S,\phi)\ne \mathbb{Q}$. The method to show irrationality is same with the method in section 5.2. Since the computation is routine and tedious, we will not give the proof here.

The example for $\Sigma_{1,3}$ is given by two families of filling curves $\{a_1,a_2\}, \{b_1,b_2\}$ in Figure 9 (a), here $a_2$ and $b_1$ both bound a three punctured sphere. Let $t_{a_2}$ be the left hand half Dehn twist along $a_2$. $t_{a_2}$ is a homeomorphism of the three punctured sphere, being identity on a neighborhood of $a_2$ and do a $\pi$-rotation away from a bigger neighborhood of $a_2$ and exchanges the other two punctures. $t_{b_1}$ is defined similarly. The pseudo-Anosov map we take is $\phi=T_{a_1}t_{a_2}t^{-1}_{b_1}T^{-1}_{b_2}T^2_{a_1}t_{a_2}t^{-3}_{b_1}T^{-1}_{b_2}$. $\phi$ does not come from Penner's construction, since it contains half twist. However, Penner's construction of bigon track still works here. We define $\hat{\mathcal{D}}(a^+,b^-)$ to be augmented semigroup of $\mathcal{D}(a^+,b^-)$, which also contains half Dehn twists, then $\phi\in \hat{\mathcal{D}}(a^+,b^-)$. This $\phi$ is generic , and we can still use $\Phi$ to compute the dilatation function.

The example for $\Sigma_{0,5}$ is a pseudo-Anosov map $\phi$ given by braid as shown in Figure 9 (b) with word $\sigma_1\sigma_3\sigma_2^{-1}$. This $\phi$ is generated by half Dehn-twists along two families of curves $a^+$ and $b^-$, so it lies in $\hat{\mathcal{D}}(a^+,b^-)$. However, this $\phi$ is not generic. We can construct the invariant train track explicitly and compute $\Theta_F$. In this example, we need to compute Alexander Polynomial of the manifold to compute the Thurston norm (see \cite{McM1} Theorem 1.1).

\begin{center}
\psfrag{a}[]{\color{blue} $a_1$} \psfrag{b}[]{\color{blue} $a_2$} \psfrag{c}[]{\color{red} $b_1$} \psfrag{d}[]{\color{magenta} $b_2$} \psfrag{e}[]{(b)} \psfrag{f}[]{(a)}
\includegraphics[width=5.5in]{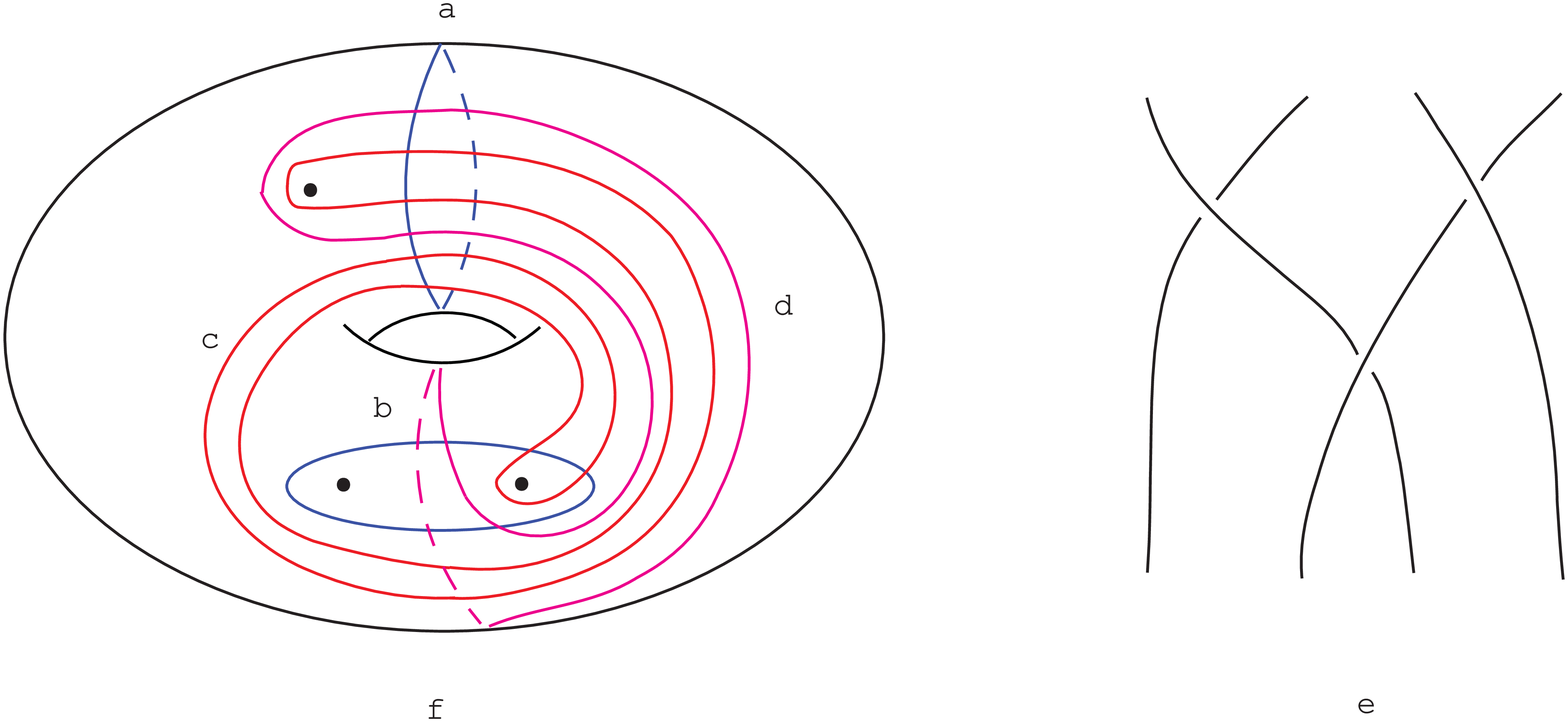}
\vskip 0.5 truecm
 \centerline{Figure 9}
\end{center}

\section{$A(S,\phi)$ is not Defined for Manifold}\label{globalsec}

We defined $A(S,\phi)$ as an invariant of pseudo-Anosov map, and also an invariant $A_C$ for fibered cone $C\subset H^1(N;\mathbb{R})$.
So an natural question is, for different fibered cones $C_1,C_2$ of the same manifold $N$, whether $A_{C_1}= A_{C_2}$ always holds. In this section, we will give an example to show the above equality does not necessarily hold.

Our example comes from Dehn-filling of one cusp of the magic manifold $N$. The magic manifold is the complement of a $3$-component link in Figure 10 (a). In \cite{KKT}, the authors are interested in dilatation of monodromy of those surface bundles obtained by Dehn-filling of the magic manifold. From those examples, they got new examples of small dilatation pseudo-Anosov maps and estimated the asymptotic property of minimal dilatation pseudo-Anosov maps.

Let $F_{\alpha},\ F_{\beta},\ F_{\gamma}$ be the two punctured discs bounded by $\alpha,\ \beta,\ \gamma$ respectively, they are the obvious two punctured discs lying on the paper in Figure 10(a) and their orientation point out of the paper. Then $([F_{\alpha}],[F_{\beta}],[F_{\gamma}])$ is a basis of $H_2(N,\partial N;\mathbb{Z})=\mathbb{Z}^3$. An element $x[F_{\alpha}]+y[F_{\beta}]+x[F_{\gamma}]\in H_2(N,\partial N;\mathbb{Z})\cong H^1(N;\mathbb{Z})$ is written as $(x,y,z)$.

In section 2 of \cite{KKT}, the topological property of Dehn-filling of $\beta$ curve is studied. Let's take the $-\frac{7}{2}$ Dehn-filling of $\beta$ curve, the resulted manifold is denoted by $N(-\frac{7}{2})$. Here the $-\frac{7}{2}$ slope is given by the intersection of $(7,2,0)\in H_2(N,\partial N;\mathbb{Z})$ with the boundary torus of $N(\beta)$.
Then by Lemma 2.15 of \cite{KKT}, $a=(4,2,3),\ b=(3,2,4)\in H_2(N,\partial N;\mathbb{Z})$ give a basis of $H_2(N(-\frac{7}{2}),\partial N(-\frac{7}{2});\mathbb{Z})$, which is also denoted by $a$ and $b$.

By Lemma 2.16 of \cite{KKT}, the ball of $H_2(N(-\frac{7}{2}),\partial N(-\frac{7}{2});\mathbb{Z})$ with Thurston norm $4$ is a hexagon as in Figure 10 (b), with vertices $\pm(2,-1),\ \pm(1,-2),\ \pm(2,-2)$. All the faces of the hexagon in Figure 10 (b) are fibered faces.

\begin{center}
\psfrag{a}[]{$\alpha$} \psfrag{b}[]{$\beta$} \psfrag{c}[]{$\gamma$} \psfrag{d}[]{(a)} \psfrag{e}[]{(b)} \psfrag{f}[]{\color{red} $F_A$} \psfrag{s}[]{$F_S$} \psfrag{g}[]{$a$} \psfrag{h}[]{$b$}
\includegraphics[width=5.5in]{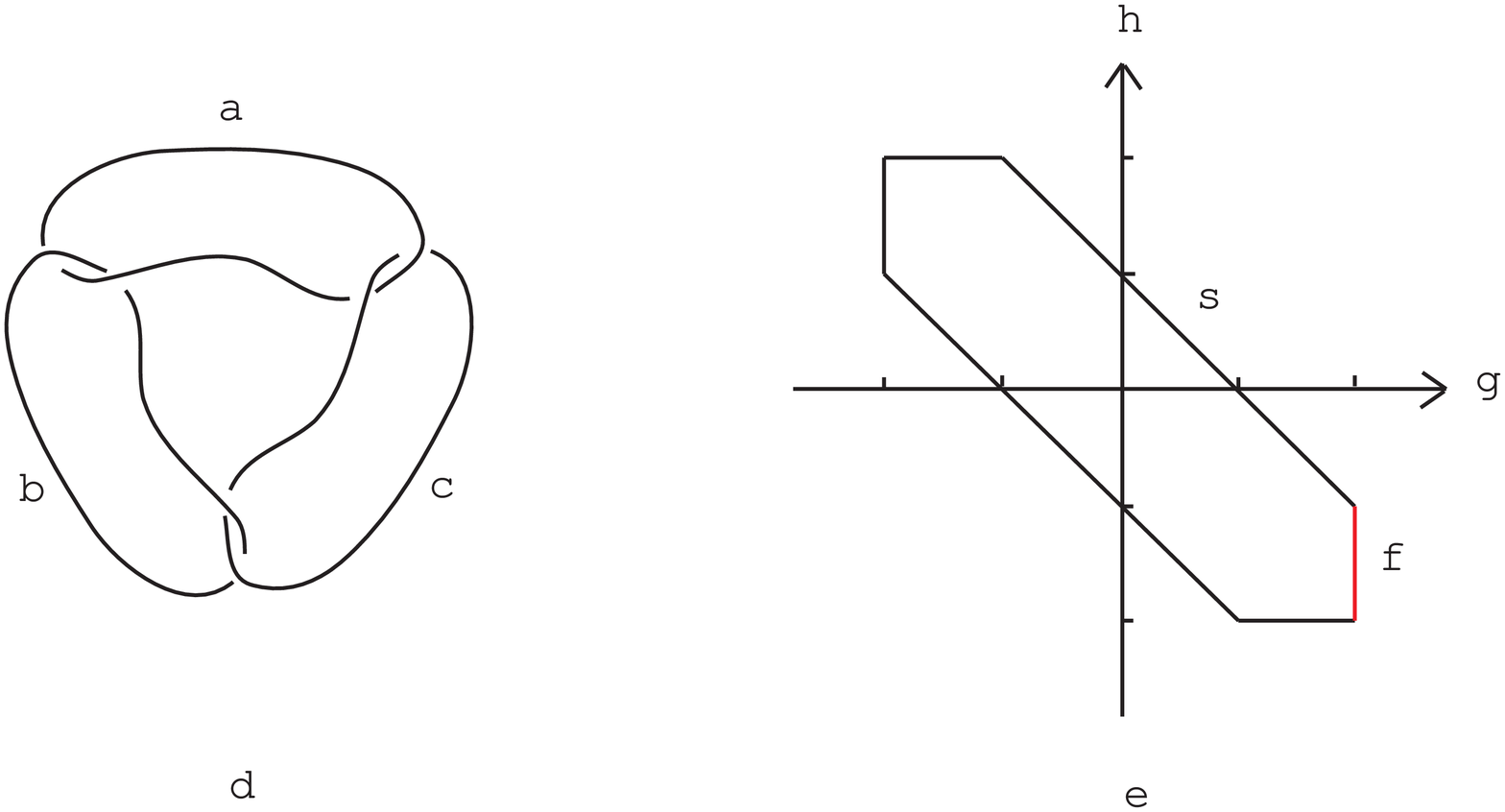}
\vskip 0.5 truecm
 \centerline{Figure 10}
\end{center}

The face $F_S$ is an {\it s-face} (symmetric face) as the definition in section 2.5.2 of \cite{KKT}. By Remark 3.3 of \cite{KKT}, the minimal point of $\lambda(\cdot)$ on $F_S$ is the middle point of $F_S$, which is rational. Let $C_S$ be the fibered cone associate with $F_S$, then $A_{C_S}=\mathbb{Q}$.

Now let's turn to think about the face $F_A$ as shown in Figure 10 (b), the two boundary points of $F_A$ is $(2,-1)$ and $(2,-2)$. These two points correspond to $(5,2,2)$ and $(2,0,-2)$ in $H^1(N;\mathbb{Z})$. By \cite{KT}, the fibered cone $C$ of magic manifold $N$ containing $(5,2,2)$ and $(2,0,-2)$ on the boundary has Teichmuller polynomial $\Theta=xyz^{-1}-x-y-xz^{-1}-yz^{-1}+1$, here $(x,y,z)$ is a basis of $H_1(N;\mathbb{Z})$ dual with $([F_{\alpha}],[F_{\beta}],[F_{\gamma}])$.

Let the minimal point be $m_F=t(2,-1)+(1-t)(2,-2)\in H^1(N(-\frac{7}{2});\mathbb{Z})$, which is equal to $(2+3t,2t,4t-2)\in H^1(N;\mathbb{Z})$. So $\lambda(m_F)$ is the greatest root of $0=X^{t+4}-X^{3t+2}-X^{2t}-X^{-t+4}-X^{-2t+2}+1$. By numerical computation of the minimal point by Mathematica (\cite{Math}), we get $t=0.528944\dots$, while $2/t=3.781116\dots$. By the algebraic number theory method, we get that, if $2/t$ is a rational number, the denominator is smaller or equal to $16$. But the numerical data implies it is impossible, so $A_{C_A}\ne \mathbb{Q}$.

In Section 2.5.2 of \cite{KKT}, they pointed out that all the A-faces of $N(r)$ are {\it entropy equivalent}. So for $N(-\frac{7}{2})$, one pair of fibered faces have $A_C=\mathbb{Q}$, while two other pairs have $A_C\ne \mathbb{Q}$ and these two pairs share the same invariant. So our invariant $A_C$ is not well defined for the whole manifold, but only for each fibered cone.

\begin{rem}
In Lemma 3.5 of \cite{KKT}, they have shown that all the fibered faces of $N(-\frac{5}{2})$ have rational minimal point. So we take $N(-\frac{7}{2})$ here, which is the "closest" example to $N(-\frac{5}{2})$.

In the proof of Lemma 3.6 of \cite{KKT}, they have tried to find the minimal point of A-face of $N(-\frac{2}{3})$ with computing $\lambda(\cdot)$ for some "very large" homology classes. It seems that they have realized the minimal point there may not be rational, but did not find a proper algebraic tool to prove it.
\end{rem}

\section{Further Questions}

1. Although $A(S,\phi)\ne \mathbb{Q}$ seems to be a general phenomenon, it is still quite mysterious what does this inequality mean. All the proof in this paper either does not give explicit example, or requires numerical computation to deduce irrationality. These two process both do not help the author much to understand the meaning of irrationality. A more sophisticated method to determine whether $A(S,\phi)=\mathbb{Q}$ will help us to understand the question much deeper. For all the examples we give in this paper, the rationality of minimal point comes from symmetry, i.e. the rational minimal point $m_F$ is the unique fixed point of an nice group action on the fibered face $F$. So another question is: whether rational $m_F$ implies any kind of symmetry?

2. Theorem \ref{general} tells us, for all closed or one punctured surfaces with $|\chi(S)|\geq 3$, there exist pseudo-Anosov $\phi$ on $S$ with $A(S,\phi)\ne\mathbb{Q}$. By the evident of drilling Theorem (Theorem \ref{drill}) and our numerical experiments, it is natural to guess that the same result hold for all surfaces with $|\chi(S)|\geq 3$. However, even if one single example of $A(S,\phi)\ne\mathbb{Q}$ gives us a family of such examples, we still do not have enough examples to show that $A(S,\phi)$ can be irrational for all surfaces with $|\chi(S)|\geq 3$.

3. The Drilling Theorem (Branched Covering Theorem) tells us drilling (branched covering) deduce irrationality for all but finitely many drilling (branched covering) classes. However, from the proof of the theorem, the number of "exceptional" drilling class is very large and depend on the Teichmuller polynomial numerically. The author would like to know, whether there is any better bound which only depend on the surface $S$ and dilatation $\lambda$, or even whether there is a universal bound.

4. In all the examples in this paper, the $\mathbb{Q}$-rank of $A(S,\phi)$ is either one or two. The author does not have an example with
higher rank $A(S,\phi)$ yet. Furthermore, whether $A(S,\phi)$ can be arbitrarily large? The author tends to believe that one can do drilling
construction to surface bundle with large $b_1$ to get large rank $A(S,\phi)$. The rank
of $A(S,\phi)$ also provides a simple invariant up to fibered cone commensurability, e.g. if $A(S,\phi)$ has large rank, then $(S,\phi)$ is not fibered cone commensurable with $(S',\phi')$ if Euler characteristic of $S'$ is small.

5. Since Virtually Fibered Conjecture holds (see \cite{Wi}, \cite{Ag}), every hyperbolic $3$-manifold with finite volume has a finite cover which is a surface bundle over circle. So for any commensurable class $\mathcal{C}$, the invariant $A_C$ is defined for a fibered cone $C$ of some $M\in\mathcal{C}$ . Then for any commensurability class $\mathcal{C}$ of hyperbolic $3$-manifold, we can ask many questions. For example, for any commensurability class $\mathcal{C}$, whether there is a surface bundle over circle $M\in \mathcal{C}$, which has a fibered face $C$, with $A_C=\mathbb{Q}$? For every commensurability class $\mathcal{C}$, we can also define a $\mathbb{Q}$-submodule of $\mathbb{R}$, which is defined by $A_{\mathcal{C}}=\sum A_C$, here $C$ runs over all the fibered cones of all the manifolds in the commensurability class $\mathcal{C}$. Since hyperbolic $3$-manifolds have virtually RFRS (residually finite rational solvable) fundamental group (see \cite{Ag1} and \cite{Ag}), $\mathcal{C}$ has virtually infinite first betti number. Moreover, for a fixed manifold $N$, we can always find a finite cover, which has a fibered cone does not correspond to any fibered cone of $N$. So $A_{\mathcal{C}}=\sum A_C$ is a sum of infinitely terms. Then it is natural to ask, whether $A_{\mathcal{C}}\ne \mathbb{Q}$ for any commensurability class $\mathcal{C}$? Moreover, we can also ask: whether $A_{\mathcal{C}}$ is always an infinitely generated $\mathbb{Q}$-submodule of $\mathbb{R}$?

\bibliographystyle{amsalpha}

\end{document}